\newcommand{\scal}[2]{\langle #1,#2\rangle}
\newcommand{\rr}[1]{\mathbf R^{#1}}
\newcommand{\nm}[2]{\Vert #1\Vert _{#2}}
\newcommand{\op}{\operatorname{Op}}
\newcommand{\sets}[2]{\{ \, #1\, ;\, #2\, \} }
\newcommand{\ep}{\varepsilon}
\newcommand{\cdo}{\, \cdot \, }
\newcommand{\loc}{\operatorname{loc}}
\newcommand{\wpr}{{\text{\footnotesize $\#$}}}
\newcommand{\vrum}{\vspace{0.1cm}}
\newcommand{\px}{\psi_{x_0}}
\newcommand{\pxi}{\psi_{\xi_0}}
\newcommand{\nn}[1]{{\mathbf N}^{#1}}
\newcommand{\maclS}{\mathcal S}
\newcommand{\mascF}{\mathscr F}
\newcommand{\mascS}{\mathscr S}
\newcommand{\mascP}{\mathscr P}
\def\cS{{\mathcal S}}
\def\pxi{\langle \xi \rangle}
\def\px{\langle x \rangle}
\numberwithin{equation}{section}          
\newtheorem{thm}{Theorem}
\numberwithin{thm}{section}
\newcommand{\rubrik}{}
\newtheorem{prop}[thm]{Proposition}
\newtheorem{cor}[thm]{Corollary}
\theoremstyle{definition}
\newtheorem{defn}[thm]{Definition}
\theoremstyle{remark}
\newtheorem{rem}[thm]{Remark}
\author{Marco Cappiello}
\address{Dipartimento di Matematica, Universit\`a di Torino, Italy}
\email{marco.cappiello@unito.it}
\author{Joachim Toft}
\address{Department of Mathematics,
Linn{\ae}us University, V{\"a}xj{\"o}, Sweden}
\email{joachim.toft@lnu.se}
\title{Pseudo-differential operators in a Gelfand-Shilov setting}
\begin{document}

\begin{abstract}
We introduce some general classes of pseudodifferential operators with symbols
admitting exponential type growth at infinity and we prove mapping properties for
these operators on Gelfand-Shilov spaces. Moreover, we deduce composition
and certain invariance properties of these classes.
\end{abstract}

\maketitle

\par

\section{Introduction}\label{sec0}

\par

In this paper we study some classes of pseudo-differential operators with ultra-differentiable
symbols $a(x,\xi)$ admitting suitable super-exponential growth at infinity.
Operators of this type are commonly known in the literature as operators of infinite
order and they have been studied in \cite{BM} in the analytic class and
in \cite{Za} in the Gevrey spaces in the case when the symbol has
an exponential growth only with respect to $\xi$. A further step has been
the formulation of a calculus of Fourier integral operators with symbols of
infinite order and its application to hyperbolic Cauchy problems in Gevrey
classes, see \cite{ CZ, CZ2}.

\par

More recently, the first author developed a
parallel global theory by considering symbols which admit exponential
growth also with respect to $x$, namely 
\begin{equation}\label{symbCap}
|\partial_\xi^{\alpha}\partial_x^{\beta}a(x,\xi)| \lesssim \ep^{|\alpha+\beta|}
(\alpha!)^\mu (\beta!)^\nu \pxi^{-|\alpha|}\px^{-|\beta|}e^{h (|x|^{\frac 1s}+|\xi|^{\frac 1s})},
\end{equation}
for every $h >0$ and for some $\ep>0,$ where $\mu\geq1, \nu \geq 1$
and $s \geq \mu+\nu-1$, cf. \cite{Ca1, Ca2}. See also
 \cite{CPP, Pr} for similar results.
 
 \par
 
A natural functional framework for symbols defined by \eqref{symbCap} is given
by the Gelfand-Shilov spaces $\mathcal{S}_s( \rr d)$, introduced in \cite{GS}, and
defined as the spaces of all functions $f \in C^\infty(\rr d)$ which satisfy estimates
of the form
\begin{equation}\label{GSestimate}
\sup_{\alpha, \beta \in \nn d}\sup_{x \in \rr d} \frac{|x^\beta \partial^\alpha f(x)|}
{\ep^{|\alpha+\beta|}(\alpha! \beta!)^s} < \infty
\end{equation}
for some constant $\ep >0$. For $s \geq 1$ they represent a natural global
counterpart of Gevrey classes. However, these spaces are actually defined
also for $\frac 12 \leq s < 1$, hence they are a good functional setting for operators
with symbols admitting a stronger exponential growth than in \eqref{symbCap}.
Together with these spaces, one can also consider their projective version,
namely the space $\Sigma_s(\rr d), s >\frac 12$ which is defined by requiring that
\eqref{GSestimate} holds for every $\ep>0$. 

\par

Nevertheless, the construction of a symbolic calculus in the case
$s < 1$ is a challenging problem, mainly because the elements of
$\mathcal{S}_s( \rr d)$ and $\Sigma_s( \rr d)$ admit entire extensions in the
complex domain satisfying suitable exponential bounds, in this case. See
\cite{GS} for details. 
In particular, for $s<1$, $\mathcal{S}_s( \rr d)$ and $\Sigma_s( \rr d)$ 
lack compactly supported functions which play a fundamental role in the 
construction of a symbol starting from its asymptotic expansion.

\par 

On the other hand, some results in this direction can be proved also in the
quasi-analytic case $s<1$ by using different tools than the usual micro-local
techniques. In this paper we approach the study of pseudo-differential
operators of infinite order on Gelfand-Shilov spaces with a method based
mainly on the use of modulation spaces and of the short time Fourier
transform. In Section \ref{sec1}, after recalling some basic properties of the
spaces $\mathcal{S}_s(\rr d)$ and $\Sigma_s(\rr d)$, we introduce several
general symbol classes. In Section \ref{sec2} we characterize these symbols
in terms of the behavior of their short time Fourier transform. In Section
\ref{sec3} we deduce continuity on $\mathcal{S}
_s(\rr d)$ and $\Sigma_s(\rr d)$, composition and invariance properties for
pseudo-differential operators in our classes.

\par

\section{Preliminaries}\label{sec1}

\par

In this section we recall some basic facts, especially concerning Gel\-fand-Shilov spaces, the
short-time Fourier transform, modulation spaces and pseudo-differential operators.

\par

In the following we shall denote by $\mathscr{S}(\rr d)$ the Schwartz space of rapidly decreasing
functions on $\rr d$ together with their derivatives, and by $\mathscr{S}'(\rr d)$
the corresponding dual space of tempered distributions. 

\subsection{Gelfand-Shilov spaces} 
\par
We start by recalling some facts about Gelfand-Shilov spaces.
Let $0<h,s,t\in \mathbf R$ be fixed. We denote by $\mathcal S_{t,h}^s(\rr d)$
the Banach space of all $f\in C^\infty (\rr d)$ such that
\begin{equation}\label{gfseminorm}
\nm f{\mathcal S_{t,h}^s}\equiv \sup_{\alpha ,\beta \in
\mathbf N^d} \sup_{x \in \rr d} \frac {|x^\beta \partial ^\alpha
f(x)|}{h^{|\alpha | + |\beta |}\alpha !^s\, \beta !^t}<\infty,
\end{equation}
endowed with the norm \eqref{gfseminorm}.

\par

Obviously we have $\mathcal S_{t,h}^s(\rr d)\hookrightarrow \mascS (\rr d)$ for any $h,s,t$
with continuous embedding. 
Furthermore, if $s,t\ge \frac 12$ and $h$
is sufficiently large, then $\mathcal
S_{t,h}^s$ contains all finite linear combinations of Hermite functions.
Since such linear combinations are dense in $\mathscr S (\rr d)$, it follows
that the dual $(\mathcal S_{t,h}^s)'(\rr d)$ of $\mathcal S_{t,h}^s(\rr d)$ is
a Banach space which contains $\mathscr S'(\rr d)$ for such $s$ and $t$.

\par

The \emph{Gelfand-Shilov spaces} $\mathcal S_t^{s}(\rr d)$ and
$\Sigma _t^s(\rr d)$ are defined as the inductive and projective 
limits respectively of $\mathcal S_{t,h}^s(\rr d)$. This implies that
\begin{equation}\label{GSspacecond1}
\mathcal S_t^{s}(\rr d) = \bigcup _{h>0}\mathcal S_{t,h}^s(\rr d)
\quad \text{and}\quad \Sigma _t^{s}(\rr d) =\bigcap _{h>0}
\mathcal S_{t,h}^s(\rr d),
\end{equation}
and that the topology for $\mathcal S_t^{s}(\rr d)$ is the strongest
possible one such that the inclusion map from $\mathcal S_{t,h}^s
(\rr d)$ to $\mathcal S_t^{s}(\rr d)$ is continuous, for every choice 
of $h>0$. The space $\Sigma _s(\rr d)$ is a Fr{\'e}chet space
with seminorms $\nm \cdo {\mathcal S_{t,h}^s}$, $h>0$. Moreover,
$\Sigma _t^s(\rr d)\neq \{ 0\}$, if and only if $s+t\ge 1$ and $(s,t)\neq
(\frac 12,\frac 12)$, and
$\maclS _t^s(\rr d)\neq \{ 0\}$, if and only
if $s+t\ge 1$.

\par

The spaces $\mathcal S_t^{s}(\rr d)$ and $\Sigma _t^s(\rr d)$ can be characterized 
also in terms of the exponential decay of their elements, namely $f \in \mathcal S_t^{s}(\rr d)$ 
(respectively $f \in \Sigma _t^{s}(\rr d)$) if and only if 
$$
|\partial^\alpha f(x)| \lesssim \ep^{|\alpha|} (\alpha!)^s e^{-h|x|^{\frac 1t}}
$$
for some $h>0, \ep>0$ (respectively for every $h>0, \ep>0$). 
Moreover we recall that for $s <1$ the elements of $\mathcal{S}_t^s(\rr d)$ admit entire 
extensions to $\mathbf{C}^d$ satisfying suitable exponential bounds, cf. \cite{GS} for details.

\medspace

The \emph{Gelfand-Shilov distribution spaces} $(\mathcal S_t^{s})'(\rr d)$
and $(\Sigma _t^s)'(\rr d)$ are the projective and inductive limit
respectively of $(\mathcal S_{t,h}^s)'(\rr d)$.  This means that
\begin{equation}\tag*{(\ref{GSspacecond1})$'$}
(\mathcal S_t^s)'(\rr d) = \bigcap _{h>0}(\mathcal S_{t,h}^s)'(\rr d)\quad
\text{and}\quad (\Sigma _t^s)'(\rr d) =\bigcup _{h>0}(\mathcal S_{t,h}^s)'(\rr d).
\end{equation}
We remark that in \cite{Pi2} it is proved that $(\mathcal S_t^s)'(\rr d)$
is the dual of $\mathcal S_t^s(\rr d)$, and $(\Sigma _t^s)'(\rr d)$
is the dual of $\Sigma _t^s(\rr d)$ (also in topological sense).

\par

The Gelfand-Shilov spaces possess several convenient mapping
properties. For example they are invariant under
translations, dilations, and to some extent tensor products
and (partial) Fourier transformations. For conveniency we set
$\maclS _s (\rr d)=\maclS _s^s (\rr d)$ and $\Sigma _s (\rr d)=\Sigma _s^s (\rr d)$, and
similarly for their distribution spaces.

\par

The Fourier transform $\mathscr F$ is the linear and continuous map on $\mascS (\rr d)$,
given by the formula
$$
(\mathscr Ff)(\xi )= \widehat f(\xi ) \equiv (2\pi )^{-\frac d2}\int _{\rr
{d}} f(x)e^{-i\scal  x\xi }\, dx
$$
when $f\in \mascS (\rr d)$. Here $\scal \cdo \cdo$ denotes the usual
scalar product on $\rr d$. 
The Fourier transform extends  uniquely to homeomorphisms
on $\mathcal S_s'(\rr d)$ and on $\Sigma _s'(\rr d)$. Furthermore,
it restricts to homeomorphisms on
$\mathcal S_s(\rr d)$ and on
$\Sigma _s(\rr d)$.

\medspace

Some considerations later on involve a broader family of
Gelfand-Shilov spaces. More precisely, for any $s_j,t_j>0$, $j=1,2$, the Gelfand-Shilov
space $\maclS _{t_1,t_2}^{s_1,s_2}(\rr {2d})$ consists of all
$F\in C^\infty (\rr {2d})$ such that
\begin{equation}\label{GSExtCond}
|x_1^{\beta _1}x_2^{\beta _2}\partial _{x_1}^{\alpha _1}
\partial _{x_2}^{\alpha _2}F(x_1,x_2)| \lesssim
h^{|\alpha _1+\alpha _2+\beta _1+\beta _2|}
(\alpha _1!)^{s_1}(\alpha _2!)^{s_2}(\beta _1!)^{t_1}(\beta _2!)^{t_2}
\end{equation}
for some $h>0$, with topology defined in analogous way as above. In the same way,
$\Sigma _{t_1,t_2}^{s_1,s_2}(\rr {2d})$ consists of all
$F\in C^\infty (\rr {2d})$ such that \eqref{GSExtCond} holds for every $h>0$.

\par

The following proposition explains mapping properties of partial Fourier transforms on
Gelfand-Shilov spaces, and follows by similar arguments as in analogous situations in
\cite{GS}. The proof is therefore omitted. Here, $\mascF _1F$ and $\mascF _2F$ are the partial
Fourier transforms of $F(x_1,x_2)$ with respect to $x_1\in \rr d$ and $x_2\in \rr d$,
respectively.

\par

\begin{prop}\label{propBroadGSSpaceChar}
Let $s_1,s_2,t_1,t_2>0$. Then the following is true:
\begin{enumerate}
\item the mappings $\mascF _1$ and $\mascF _2$ on $\mascS (\rr {2d})$
restrict to homeomorphisms
$$
\mascF _1 \, : \, \maclS _{t_1,t_2}^{s_1,s_2}(\rr {2d}) \to \maclS _{s_1,t_2}^{t_1,s_2}(\rr {2d})
\quad \text{and}\quad
\mascF _2 \, : \, \maclS _{t_1,t_2}^{s_1,s_2}(\rr {2d}) \to \maclS _{t_1,2_2}^{s_1,t_2}(\rr {2d})
\text ;
$$

\vrum

\item the mappings $\mascF _1$ and $\mascF _2$ on $\mascS (\rr {2d})$
are uniquely extendable to homeomorphisms
$$
\mascF _1 \, : \, (\maclS _{t_1,t_2}^{s_1,s_2})'(\rr {2d}) \to (\maclS _{s_1,t_2}^{t_1,s_2})'(\rr {2d})
\quad \text{and}\quad
\mascF _2 \, : \, (\maclS _{t_1,t_2}^{s_1,s_2})'(\rr {2d}) \to (\maclS _{t_1,2_2}^{s_1,t_2})'(\rr {2d}).
$$
\end{enumerate}

\par

The same holds true if the $\maclS _{t_1,t_2}^{s_1,s_2}$-spaces and their duals are replaced by
corresponding $\Sigma _{t_1,t_2}^{s_1,s_2}$-spaces and their duals.
\end{prop}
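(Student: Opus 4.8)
The plan is to prove everything for $\mascF _1$ only; the statements for $\mascF _2$ follow by interchanging the two groups of variables, and the $\Sigma$-versions follow by reading every estimate below ``for every $h>0$'' instead of ``for some $h>0$''. It also suffices to establish continuity in (1): the inverse of $\mascF _1$ differs from $\mascF _1$ only by the reflection $x_1\mapsto -x_1$, which visibly preserves the relevant spaces, so continuity of $\mascF _1$ in both directions (the second obtained from the first with the roles of $s_1$ and $t_1$ interchanged) yields the homeomorphism. Finally, (2) will follow from (1) by transposition, since $\mascF$ is formally self-transpose and, by (1) applied with $s_1,t_1$ swapped, $\mascF _1$ maps the predual $\maclS _{s_1,t_2}^{t_1,s_2}(\rr {2d})$ continuously back into $\maclS _{t_1,t_2}^{s_1,s_2}(\rr {2d})$, so that $\scal{u}{\mascF _1\varphi}$ is well defined.

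So I would fix $F\in \maclS _{t_1,t_2}^{s_1,s_2}(\rr {2d})$, put $G=\mascF _1F$, and aim to verify \eqref{GSExtCond} for $G$ with $(s_1,s_2,t_1,t_2)$ replaced by $(t_1,s_2,s_1,t_2)$, i.e.
\[
|\xi _1^{\beta _1}x_2^{\beta _2}\partial _{\xi _1}^{\alpha _1}\partial _{x_2}^{\alpha _2}G(\xi _1,x_2)|\lesssim h^{|\alpha _1+\alpha _2+\beta _1+\beta _2|}(\alpha _1!)^{t_1}(\alpha _2!)^{s_2}(\beta _1!)^{s_1}(\beta _2!)^{t_2}.
\]
Since $\maclS _{t_1,t_2}^{s_1,s_2}(\rr {2d})\hookrightarrow \mascS (\rr {2d})$, all operations are legitimate. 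The standard intertwining identities for the Fourier transform give, up to a unimodular constant, $\xi _1^{\beta _1}\partial _{\xi _1}^{\alpha _1}G=\mascF _1\bigl(\partial _{x_1}^{\beta _1}(x_1^{\alpha _1}F)\bigr)$, and since $x_2^{\beta _2}$ and $\partial _{x_2}^{\alpha _2}$ commute with $\mascF _1$, bounding the sup-norm of a Fourier transform by the $L^1$-norm of its argument gives, uniformly in $\xi _1$,
\[
|\xi _1^{\beta _1}x_2^{\beta _2}\partial _{\xi _1}^{\alpha _1}\partial _{x_2}^{\alpha _2}G(\xi _1,x_2)|\lesssim \int _{\rr d}\bigl|x_2^{\beta _2}\partial _{x_2}^{\alpha _2}\partial _{x_1}^{\beta _1}(x_1^{\alpha _1}F)(x_1,x_2)\bigr|\,dx_1 .
\]

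To convert this $L^1$-bound into the required sup-estimate I would insert the integrable weight $(1+|x_1|^2)^{-d}$, so that the integral is controlled by $\sup _{x_1}\bigl[(1+|x_1|^2)^{d}|\cdots|\bigr]$. Expanding $(1+|x_1|^2)^{d}$ into finitely many monomials $x_1^{2\delta}$, $|\delta|\le d$, and $\partial _{x_1}^{\beta _1}(x_1^{\alpha _1}F)$ by the Leibniz rule, and applying \eqref{GSExtCond} to each resulting term $x_1^{2\delta +\alpha _1-\gamma}x_2^{\beta _2}\partial _{x_1}^{\beta _1-\gamma}\partial _{x_2}^{\alpha _2}F$, the extra factor $x_1^{2\delta}$ only shifts $\alpha _1$ to $\alpha _1+2\delta$ and hence enlarges the constant by a factor $C^{|\alpha _1|}$. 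After factoring out $(\alpha _2!)^{s_2}(\beta _2!)^{t_2}$ and the target weights $(\alpha _1!)^{t_1}(\beta _1!)^{s_1}$, the whole estimate reduces to the purely combinatorial bound
\[
\sum _{\gamma\le\min(\alpha _1,\beta _1)}\binom{\beta _1}{\gamma}^{1-s_1}\binom{\alpha _1}{\gamma}^{1-t_1}(\gamma !)^{1-s_1-t_1}\lesssim C^{|\alpha _1+\beta _1|}.
\]

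This combinatorial estimate is the crux of the argument, and the only step requiring care. It holds precisely on the nontrivial range $s_1+t_1\ge 1$: there the factor $(\gamma !)^{1-s_1-t_1}$ is $\le 1$, while (distinguishing the cases $s_1\le 1$, $s_1>1$, and likewise for $t_1$) one has $\binom{\beta _1}{\gamma}^{1-s_1}\le 2^{|\beta _1|}$ and $\binom{\alpha _1}{\gamma}^{1-t_1}\le 2^{|\alpha _1|}$, and the number of summands is at most $2^{|\beta _1|}$. Carrying out this bookkeeping shows that the weights $(\alpha _1!)^{t_1}(\beta _1!)^{s_1}$ emerge—with the derivative and polynomial orders in the first group of variables interchanged, as required—at the cost of a harmless enlargement of $h$. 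This yields the displayed estimate for $G$, completing the proof of (1) and hence, by the reductions above, of the whole proposition.
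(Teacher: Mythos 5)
Your argument is correct, and it is essentially the standard Gelfand--Shilov computation (intertwining $\xi_1^{\beta_1}\partial_{\xi_1}^{\alpha_1}$ with $\partial_{x_1}^{\beta_1}(x_1^{\alpha_1}\,\cdot\,)$, a Leibniz expansion, and the combinatorial bound $\sum_\gamma\binom{\beta_1}{\gamma}^{1-s_1}\binom{\alpha_1}{\gamma}^{1-t_1}(\gamma!)^{1-s_1-t_1}\lesssim C^{|\alpha_1+\beta_1|}$ valid exactly on the nontrivial range $s_1+t_1\ge 1$) that the paper itself omits, deferring to the analogous arguments in Gelfand--Shilov. The reductions to the case of $\mascF_1$, to one direction of continuity via reflection, and to the dual spaces by transposition are all sound, so nothing further is needed.
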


\par

The next two results follow from \cite{ChuChuKim}. The proofs are therefore omitted.

\begin{prop}
Let $s_1,s_2,t_1,t_2>0$ be such that $s_1+t_1\ge 1$ and $s_2+t_2\ge 1$. Then the following
conditions are equivalent.
\begin{enumerate}
\item $F\in \maclS _{t_1,t_2}^{s_1,s_2}(\rr {2d})$;

\vrum

\item $\displaystyle{|F(x_1,x_2)|\lesssim e^{-h(|x_1|^{\frac 1{t_1}} + |x_2|^{\frac 1{t_2}} )}}$
and
$\displaystyle{|\widehat F(\xi _1,\xi _2)|\lesssim e^{-h(|\xi _1|^{\frac 1{s_1}} + |\xi _2|^{\frac 1{s_2}} )}}$,
for some $h>0$.
\end{enumerate}
\end{prop}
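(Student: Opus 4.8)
The plan is to establish the two implications separately; $(1)\Rightarrow(2)$ is routine and the whole weight of the statement sits in $(2)\Rightarrow(1)$.

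For $(1)\Rightarrow(2)$, I would set $\alpha_1=\alpha_2=0$ in \eqref{GSExtCond}, which gives $|x_1^{\beta_1}x_2^{\beta_2}F(x_1,x_2)|\lesssim h^{|\beta_1|+|\beta_2|}(\beta_1!)^{t_1}(\beta_2!)^{t_2}$ for some $h>0$. For fixed $(x_1,x_2)$ one then minimizes the right-hand side over $\beta_1$ and over $\beta_2$ (the optimal order in the $j$-th block is $|\beta_j|\sim(|x_j|/h)^{1/t_j}$), and Stirling's formula turns the two infima into the decay $e^{-h'(|x_1|^{1/t_1}+|x_2|^{1/t_2})}$, which is the first bound in $(2)$. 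For the second, I invoke Proposition \ref{propBroadGSSpaceChar}: composing the two partial Fourier transforms yields $\widehat F=\mascF_1\mascF_2F\in\maclS_{s_1,s_2}^{t_1,t_2}(\rr{2d})$, and the same optimization applied to $\widehat F$, now with decay indices $s_1,s_2$, gives $|\widehat F(\xi_1,\xi_2)|\lesssim e^{-h'(|\xi_1|^{1/s_1}+|\xi_2|^{1/s_2})}$.

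For $(2)\Rightarrow(1)$ I would first extract two one-sided families of estimates. Writing $\partial^\alpha F(x)=(2\pi)^{-d}\int_{\rr{2d}}(i\xi)^\alpha\widehat F(\xi)e^{i\scal x\xi}\,d\xi$ with $x,\xi\in\rr{2d}$ and $\alpha=(\alpha_1,\alpha_2)$, the decay of $\widehat F$ gives $|\partial^\alpha F(x)|\le(2\pi)^{-d}\int_{\rr{2d}}|\xi_1|^{|\alpha_1|}|\xi_2|^{|\alpha_2|}e^{-h(|\xi_1|^{1/s_1}+|\xi_2|^{1/s_2})}\,d\xi$; the integral factors over the two blocks, and each factor is computed through the substitution $u=|\xi_j|^{1/s_j}$ and the asymptotics of $\Gamma$, yielding $\sup_x|\partial^\alpha F(x)|\lesssim C^{|\alpha|}(\alpha_1!)^{s_1}(\alpha_2!)^{s_2}$. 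The second family, $\sup_x|x_1^{\beta_1}x_2^{\beta_2}F(x)|\lesssim C^{|\beta|}(\beta_1!)^{t_1}(\beta_2!)^{t_2}$, follows directly from the decay of $F$ by the optimization of the previous paragraph.

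The hard part is to couple these into the full joint estimate \eqref{GSExtCond} for $|x_1^{\beta_1}x_2^{\beta_2}\partial^\alpha F|$. A naive integration by parts inside the inverse Fourier integral only reduces the joint bound for $F$ to the corresponding joint bound for $\widehat F$, which is the symmetric unknown, so the argument is circular; to break the circularity I would pass to the complex domain. When $s_j<1$ the super-exponential decay $e^{-h|\xi_j|^{1/s_j}}$ forces $F$ to extend holomorphically in the $j$-th block (to a tube when $s_j=1$), with imaginary growth governed by the Legendre conjugate of $|\cdot|^{1/s_j}$; combining this with the decay $e^{-h|x_j|^{1/t_j}}$ of $F$ on the real axis and applying Cauchy's estimates on polydiscs whose radii are tuned simultaneously to $\alpha$ and to $x$ produces the Gevrey factor $(\alpha_j!)^{s_j}$ and the decay $e^{-c|x_j|^{1/t_j}}$ at once, after which multiplication by $x^\beta$ and a final optimization give \eqref{GSExtCond}. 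The condition $s_j+t_j\ge1$ is precisely the Phragm\'en--Lindel\"of compatibility that makes controlled growth off the real axis coexist with the prescribed decay on it; the residual non-analytic regime $s_j\ge1$ is instead handled by a Landau--Kolmogorov type interpolation between the two one-sided families above. This coupling is the crux of the proposition and is the content imported from \cite{ChuChuKim}.
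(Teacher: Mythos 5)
The paper does not actually prove this proposition: it is stated as a consequence of \cite{ChuChuKim} and the proof is explicitly omitted, so there is no internal argument to measure yours against, only the citation. Judged against the known proof that the citation points to, your implication $(1)\Rightarrow(2)$ is correct and correctly executed, as are the two ``one-sided'' families you extract at the start of $(2)\Rightarrow(1)$, namely $\sup_x|\partial^\alpha F(x)|\lesssim C^{|\alpha|}(\alpha_1!)^{s_1}(\alpha_2!)^{s_2}$ from the decay of $\widehat F$ and $\sup_x|x_1^{\beta_1}x_2^{\beta_2}F(x)|\lesssim C^{|\beta|}(\beta_1!)^{t_1}(\beta_2!)^{t_2}$ from the decay of $F$; you also rightly diagnose that the naive integration by parts is circular.

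However, the step that carries the entire weight of the proposition --- upgrading those two one-sided families to the joint estimate \eqref{GSExtCond} --- is announced rather than proved. The two mechanisms you name (holomorphic extension plus Cauchy estimates for $s_j<1$, a Landau--Kolmogorov iteration for $s_j\ge 1$) are indeed the right tools, but each conceals the actual work. For $s_j<1$ you must first establish, by a Phragm\'en--Lindel\"of argument, that the decay $e^{-h|x_j|^{1/t_j}}$ valid on the real axis persists on polydiscs whose radii grow like $|\alpha_j|^{1-s_j}$; without that, the Cauchy estimate delivers the Gevrey factor or the decay, not both at once. For $s_j\ge 1$ the iterated Kolmogorov inequality forces you to track how the decay exponent degrades at each doubling step and to show that the combinatorics close, which is precisely where $s_j+t_j\ge 1$ enters; asserting the inequality by name does not do this. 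You also leave unaddressed the mixed case $s_1<1\le s_2$, where the two mechanisms must run in different blocks of variables simultaneously. Since you yourself describe this coupling as ``the content imported from \cite{ChuChuKim}'', your proposal in effect reduces the proposition to the same citation the paper relies on after settling the routine parts; as a self-contained proof it has a genuine gap exactly at the crux.
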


\par

\begin{prop}
Let $s_1,s_2,t_1,t_2>0$ be such that $s_1+t_1\ge 1$ and $s_2+t_2\ge 1$ and $s_j,t_j\neq \frac 12$,
$j=1,2$. Then the following conditions are equivalent.
\begin{enumerate}
\item $F\in \Sigma _{t_1,t_2}^{s_1,s_2}(\rr {2d})$;

\vrum

\item $\displaystyle{|F(x_1,x_2)|\lesssim e^{-h(|x_1|^{\frac 1{t_1}} + |x_2|^{\frac 1{t_2}} )}}$
and
$\displaystyle{|\widehat F(\xi _1,\xi _2)|\lesssim e^{-h(|\xi _1|^{\frac 1{s_1}} + |\xi _2|^{\frac 1{s_2}} )}}$,
for every $h>0$.
\end{enumerate}
\end{prop}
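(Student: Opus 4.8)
This is the $\Sigma$-counterpart of the preceding proposition, with ``for some $h>0$'' replaced everywhere by ``for every $h>0$'', and I would run the same scheme; I describe it for $\Sigma _{t_1,t_2}^{s_1,s_2}$. The idea is to parallel the classical equivalence between the defining seminorm estimates and a pair of pointwise decay conditions on $F$ and $\widehat F$, being careful about the anisotropy of the two blocks $x_1,x_2\in\rr d$.

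The implication $(1)\Rightarrow(2)$ is the soft one. Taking $\alpha _1=\alpha _2=0$ in \eqref{GSExtCond} gives, for every $h>0$, the bound $|x_1^{\beta _1}x_2^{\beta _2}F(x_1,x_2)|\lesssim h^{|\beta _1+\beta _2|}(\beta _1!)^{t_1}(\beta _2!)^{t_2}$. For fixed $(x_1,x_2)$ I minimize the right-hand side over $\beta _1,\beta _2\in\nn d$; a Stirling estimate gives $\inf _\beta h^{|\beta|}(\beta !)^{t}/|x|^{|\beta|}\lesssim e^{-c(h)|x|^{1/t}}$ with $c(h)\to\infty$ as $h\to 0$, so the arbitrariness of $h$ yields $|F(x_1,x_2)|\lesssim e^{-h(|x_1|^{1/t_1}+|x_2|^{1/t_2})}$ for every $h>0$. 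Applying $\mascF _1$ and $\mascF _2$ and iterating Proposition \ref{propBroadGSSpaceChar} shows $\widehat F\in\Sigma _{s_1,s_2}^{t_1,t_2}(\rr {2d})$, i.e.\ the two index pairs are interchanged; the same optimization applied to $\widehat F$ then gives the required decay of $|\widehat F(\xi _1,\xi _2)|$.

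The substantial direction is $(2)\Rightarrow(1)$, where the two \emph{separate} decays must be promoted to the \emph{joint} bound \eqref{GSExtCond} controlling $x_1^{\beta _1}x_2^{\beta _2}\partial _{x_1}^{\alpha _1}\partial _{x_2}^{\alpha _2}F$ simultaneously. First I would extract the two one-sided families of estimates: the decay of $\widehat F$ gives $\|\partial _{x_1}^{\alpha _1}\partial _{x_2}^{\alpha _2}F\|_\infty\lesssim R^{|\alpha|}(\alpha _1!)^{s_1}(\alpha _2!)^{s_2}$ (by differentiating under the inverse Fourier integral and optimizing, with $R$ arbitrarily small), while the decay of $F$ gives, dually, $\|\partial _{\xi _1}^{\beta _1}\partial _{\xi _2}^{\beta _2}\widehat F\|_\infty\lesssim R^{|\beta|}(\beta _1!)^{t_1}(\beta _2!)^{t_2}$. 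To assemble the joint estimate I would write $x^\beta\partial _x^\alpha F$ as an inverse Fourier integral and integrate by parts in $\xi$, reducing matters to $L^1$ bounds on the derivatives $\partial _\xi^\beta(\xi^\alpha\widehat F)$; since this requires \emph{decay} of the derivatives $\partial _\xi^\gamma\widehat F$ rather than mere boundedness, I would interpolate, by a Landau--Kolmogorov type inequality, the pointwise decay of $\widehat F$ against the factorial sup-bounds just obtained for its higher derivatives, so that the $\partial _\xi^\gamma\widehat F$ inherit exponential decay. Organising the resulting factorials with Stirling then reproduces \eqref{GSExtCond}, block by block, and the $\Sigma$ version follows because all constants $R$ may be taken arbitrarily small.

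The main obstacle is exactly this coupling: the hypotheses deliver control of multiplication and of differentiation \emph{separately}, but not of their product, and the only way to combine them is through the interpolation that turns ``$\widehat F$ decays and its derivatives are bounded'' into ``the derivatives of $\widehat F$ decay''. Making the geometric-mean exponents produced by that interpolation reproduce precisely $(\alpha _1!)^{s_1}(\alpha _2!)^{s_2}(\beta _1!)^{t_1}(\beta _2!)^{t_2}$, and checking that the constants from the two anisotropic blocks combine uniformly, is the delicate bookkeeping. Here the assumptions $s_j+t_j\ge 1$ and $s_j,t_j\neq\frac 12$ are essential: the former guarantees the spaces are nontrivial, and the latter excludes the quasi-analytic borderline at which the pair of decay conditions ceases to characterize $\Sigma _{t_1,t_2}^{s_1,s_2}$; this sharpening over the mere condition $(s_j,t_j)\neq(\tfrac12,\tfrac12)$ is what \cite{ChuChuKim} exploits.
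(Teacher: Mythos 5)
The paper does not prove this proposition: it is stated together with its $\mathcal S$-counterpart under the remark ``The next two results follow from \cite{ChuChuKim}. The proofs are therefore omitted.'' So there is no in-paper argument to compare against, and your proposal must be judged against the strategy of that reference. Your outline does follow it: the implication $(1)\Rightarrow(2)$ by minimizing $h^{|\beta |}(\beta !)^t/|x|^{|\beta |}$ over $\beta$ and using Proposition \ref{propBroadGSSpaceChar} to transfer the estimates to $\widehat F$ is correct and complete. For $(2)\Rightarrow(1)$ you correctly identify both the decoupled information (factorial sup-bounds on $\partial ^\alpha F$ from the decay of $\widehat F$, and vice versa) and the mechanism for recombining it (Landau--Kolmogorov interpolation, which forces the derivatives of $F$, resp. $\widehat F$, to inherit the exponential decay). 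That is exactly the engine behind the Chung--Chung--Kim result, which in turn rests on Kashpirovskii's identity $\mathcal S_t\cap \mathcal S^s=\mathcal S_t^s$ for $s+t\ge 1$.

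Two caveats. First, the decisive step is left as ``delicate bookkeeping'': the iterated inequality $\|f^{(q)}\|_{L^\infty (I)}\lesssim c^q\|f\|_{L^\infty (I)}^{1-q/p}\|f^{(p)}\|_{L^\infty (I)}^{q/p}$ must be optimized over $p$ on suitably chosen intervals, and it is only the hypothesis $s_j+t_j\ge 1$ that makes the resulting exponents close up to $(\alpha _j!)^{s_j}(\beta _j!)^{t_j}$. You misattribute this hypothesis to nontriviality of the spaces; in fact it is the analytic condition without which the interpolation does not reproduce the joint bound \eqref{GSExtCond}, and without carrying out that optimization the hard direction is not yet a proof. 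Second, in the $\Sigma$ (projective) case one must also track that every constant in the recombination can be made arbitrarily small uniformly, which is where the extra restriction $s_j,t_j\neq \frac 12$ (strictly stronger than $(s_j,t_j)\neq (\frac 12,\frac 12)$) is used; your remark on this is plausible but not substantiated. In short: right route, correct easy direction, but the core interpolation argument is named rather than executed.
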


\par

\subsection{The short time Fourier transform and modulation spaces}
Before recalling the definition of modulation spaces, we recall some basic facts about
the short-time Fourier transform and weights.

\par

Let $\phi \in \maclS _{\frac 12}(\rr d)\setminus 0$ be fixed. Then the short-time
Fourier transform of $f\in \maclS _{\frac 12}'(\rr d)$ is given by
$$
(V_\phi f)(x,\xi ) = (2\pi )^{-\frac d2}(f,\phi (\cdo -x)e^{i\scal \cdo \xi})_{L^2}.
$$
Here $(\cdo ,\cdo )_{L^2}$ is the unique extension of the $L^2$-form on
$\maclS _{\frac 12}(\rr d)$ to a continuous sesqui-linear form on $\maclS
_{\frac 12}'(\rr d)\times \maclS _{\frac 12}(\rr d)$. In the case
$f\in L^p(\rr d)$, for some $p\in [1,\infty]$, then $V_\phi f$ is given by
$$
V_\phi f(x,\xi ) \equiv (2\pi )^{-\frac d2}\int _{\rr d}f(y)\overline{\phi (y-x)}
e^{-i\scal y\xi}\, dy .
$$

\par

A function $\omega$ on $\rr d$ is called a \emph{weight}, or \emph{weight function},
if $\omega ,1/\omega \in L^\infty _{\loc} (\rr d)$
are positive everywhere. It is often assumed that $\omega$ is $v$-moderate
for some positive function $v$ on $\rr d$. This means that
\begin{equation}\label{vModerate}
\omega (x+y)\lesssim \omega (x)v(y),\quad x,y\in \rr d.
\end{equation}
If $v$ is even and satisfies \eqref{vModerate} with $\omega =v$, then $v$ is called submultiplicative.

\par

Let $p,q\in [1,\infty ]$, $\omega$ be a weight on $\rr {2d}$, and let
$\phi (x)=\pi ^{-d/4}e^{-|x|^2/2}$. Then the modulation space $M^{p,q}
_{(\omega )}(\rr d)$ consists of all $f\in \maclS _{\frac 12}'(\rr d)$ such that
\begin{equation}\label{Modnorm}
\nm f{M^{p,q}_{(\omega )}} \equiv \left (  \int _{\rr d} \left ( \int _{\rr d}
|V_\phi f(x,\xi )\omega (x,\xi )|^p\, dx\right )^{q/p}\, d\xi \right )^{1/q} <\infty 
\end{equation}
(with obvious modifications when $p=\infty$ or $q=\infty$). We put
$M^p_{(\omega )}=M^{p,p}_{(\omega )}$.

\par

If $e^{-\ep (|x|^2+|\xi |^2)}\lesssim \omega (x,\xi )$ for every $\ep >0$, then
$M^{p,q}_{(\omega )}(\rr d)$ is a Banach space with norm \eqref{Modnorm}
(see \cite{To15}). Furthermore, if $\omega$ is $v$-moderate
for some $v$, then it is $v$-moderate for some new $v$ which is submultiplicative.
In this case,
$$
\Sigma _1(\rr d)\subseteq M^{p,q}_{(\omega )}(\rr d)\subseteq \Sigma _1'(\rr d),
$$
$M^{p,q}_{(\omega )}(\rr d)$ is invariant under the choice of the window function
$\phi \in M^1_{(v)}\setminus 0$, and different choices of $\phi \in M^1_{(v)}\setminus 0$
give rise to equivalent norms \eqref{Modnorm}. (See \cite{Gro}.)

\par

\subsection{Pseudo-differential operators}
Let $t\in \mathbf R$ be fixed and let $a\in \maclS _{\frac 12} (\rr {2d})$. Then the
\emph{pseudo-differential operator} $\op _t(a)$
with \emph{symbol} $a$ is the continuous operator on $\maclS _{\frac 12} (\rr d)$,
defined by the formula
\begin{equation}\label{e0.5}
(\op _t(a)f)(x)
=
(2\pi  ) ^{-d}\iint a((1-t)x+ty,\xi )f(y)e^{i\scal {x-y}\xi }\,
dyd\xi .
\end{equation}

\par

More generally, if $A$ is a real $d\times d$-matrix, then the
\emph{pseudo-differential operator} $\op _A(a)$
with symbol $a$ is the continuous operator on $\maclS _{\frac 12} (\rr d)$,
defined by the formula
\begin{equation}\tag*{(\ref{e0.5})$'$}
(\op _A(a)f)(x)
=
(2\pi  ) ^{-d}\iint a(x-A(x-y),\xi )f(y)e^{i\scal {x-y}\xi }\,
dyd\xi .
\end{equation}
We note that $\op _t(a)=\op _A(a)$ when $A=t\cdot I$ and $I$ is the identity matrix.
The definition of $\op _A(a)$ extends uniquely to any $a\in \maclS _{\frac 12} '(\rr
{2d})$, and then $\op _A(a)$ is continuous from $\maclS _{\frac 12} (\rr d)$ to
$\maclS _{\frac 12} '(\rr d)$ (cf. e.{\,}g. \cite {Ho,To11}). More precisely, if $a\in
\maclS _{\frac 12}'(\rr {2d})$, then $\op _A(a)$ is defined as the continuous
operator from $\maclS _{\frac 12}(\rr d)$ to $\maclS _{\frac 12}'(\rr d)$ with
the kernel
\begin{equation}\label{KaADef}
K_{A,a}(x,y) \equiv (\mascF _2^{-1}a)(x-A(x-y),x-y).
\end{equation}
It is easily seen that the latter definition agrees with \eqref{e0.5}$'$
when $a\in L^1(\rr {2d})$.

\par

If $t=\frac 12$, then $\op _t(a)$ is equal to the Weyl
operator $\op ^w(a)$ for $a$. If instead $t=0$, then the standard
(Kohn-Nirenberg) representation $a(x,D)$ is obtained.

\par

\subsection{Symbol classes}

\par

Next we introduce function spaces related to symbol classes
of the pseudo-differential operators. These functions should obey various
conditions of the form
\begin{equation}
\label{symbols}
|\partial^\alpha f(x)| \lesssim \varepsilon^{|\alpha|}\alpha !^s e^{h |x|^{\frac 1s}}.
\end{equation}

\par

\begin{defn}
Let $s >0$. 
\begin{enumerate}
\item The set $\Gamma^\infty_s (\rr d)$ consists of all $f \in C^\infty(\rr d)$
such that for some $\varepsilon>0$, \eqref{symbols} holds for every $h > 0$.

\vrum

\item 
The set $\Gamma^\infty_{0,s} (\rr d)$ consists of all $f \in C^\infty(\rr d)$
such that for some $h>0$, \eqref{symbols} holds for every $\varepsilon >0$.

\vrum

\item
The set $\Gamma^\infty_{1,s} (\rr d)$ consists of all $f \in C^\infty(\rr d)$
such that \eqref{symbols} holds for some $\ep , h>0$.
\end{enumerate}
\end{defn}

\par

Next we introduce some convenient weight classes.

\par

\begin{defn}\label{P}
Let $s>0.$ Then $\mathscr P_s(\rr d)$ consists of all $\omega \in L^{\infty}
_{\textrm{loc}}(\rr d)$ such that $\omega >0$ and 
\begin{equation} \label{estomega}
\omega (x+y) \lesssim \omega (x)e^{c|y|^{\frac 1s}}, \quad  x,y \in \rr d,
\end{equation}
for some positive constant $c$.
\end{defn}

\par

\begin{rem}\label{remP}
Assume that \eqref{estomega} is true for some $s<1$. Then by
\cite[Lemma 4.2]{Gro2}, \eqref{estomega} is true also when $s=1$,
for some other choice of the constant $c>0$, if necessary.
Furthermore, since $e^{c|y|}\lesssim e^{h|y|^{\frac 1s}}$ for every
$h>0$, it follows that \eqref{estomega} is true for every $c>0$ in
this case. In particular,
$$
e^{-h |x|^{\frac 1s}}\lesssim e^{-c|x|} \lesssim \omega (x)
\lesssim e^{c|x|}\lesssim e^{h|x|^{\frac 1s}}
$$
for some $c>0$ and every $h>0$.
\end{rem}

\par

In Section \ref{sec3} we shall investigate algebraic properties for
pseudo-differential operators  with
symbols in the classes given in the following definition.

\par

\begin{defn}
Let $s>0$ and let $\omega \in \mathscr
P_{s}(\rr {2d})$. The set $S_s^{(\omega)}(\rr {2d})$ consists of all
$a \in C^{\infty}(\rr {2d})$ such that
\begin{equation}\label{SsCond}
|\partial^{\alpha}a(x,\xi)|  \lesssim \omega(x,\xi) \ep^{|\alpha|}(\alpha!)^s,
\end{equation}
for every $\ep >0.$
\end{defn} 

\par

\begin{rem}\label{rem:GammaToSRel}
Let $s>0$ and $\omega \in \mascP _s(\rr d)$. By Remark \ref{remP}
it follows that $S^{(\omega )}_s(\rr d)$ is contained in $\Gamma ^\infty _{0,s}(\rr d)$.
Furthermore, if in addition $s<1$, then it follows from the same remark
that $S^{(\omega )}_s(\rr d)$ is
contained in $\Gamma ^\infty _{s}(\rr d)$, as well.
\end{rem}

\par

Later on we also need the Propositions \ref{LimitSpaces} and \ref{TraceProp} below. The first
proposition deals with convenient sequences which converge to a given element
in $\Gamma ^\infty _{s}(\rr {d})$. Here the sequence $f_\ep$, $\ep >0$, in
$\Gamma ^\infty _{s}(\rr {d})$
is said to converge to $f\in \Gamma ^\infty _{s}(\rr {d})$ as $\ep \to 0^+$, if for some
$h>0$ we have $\nm {f-f_\ep}{s,h,r}\to 0$ as $\ep \to  0^+$, for every $r>0$. Here
$$
\nm f{s,h,r}\equiv \sup _{\alpha \in \nn d}\frac {\nm {e^{-r|\cdo |^{\frac 1s}}\partial ^\alpha f}
{L^\infty}}{\alpha !^sh^{|\alpha |}} ,
$$
and for conveniency we let $\Gamma ^{\infty ,h} _{s}(\rr {d})$ be the set of all
$f\in C^\infty (\rr d)$
such that $\nm f{s,h,r}<\infty$ for every $r>0$.

\par

\begin{prop}\label{LimitSpaces}
Let $s\ge \frac 12$, $h>0$, $f \in \Gamma ^\infty _{s}(\rr {d})$ and
$f_{\ep}=e^{-\ep |\cdo |^{2}}f$, $\ep >0$.
Then $f_\ep \to f$ in $\Gamma ^\infty _{s}(\rr {d})$.
\end{prop}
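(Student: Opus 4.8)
The plan is to fix $f\in\Gamma^\infty_s(\rr d)$, so that by definition there is $\ep_0>0$ with the property that for every $h'>0$ one has $|\partial^\alpha f(x)|\le C_{h'}\ep_0^{|\alpha|}\alpha!^s e^{h'|x|^{1/s}}$ for a suitable constant $C_{h'}$. I will exhibit a single $h>0$, depending only on $\ep_0$ and $s$, for which $\nm{f-f_\ep}{s,h,r}\to0$ as $\ep\to0^+$ for each fixed $r>0$; this is exactly the asserted convergence. Writing $g_\ep=e^{-\ep|\cdot|^2}$ and applying the Leibniz rule while isolating the term $\beta=0$, I split
\[
\partial^\alpha(f-f_\ep)=(1-g_\ep)\,\partial^\alpha f-\sum_{0<\beta\le\alpha}\binom\alpha\beta(\partial^\beta g_\ep)\,\partial^{\alpha-\beta}f .
\]
The two pieces are estimated separately; in each I multiply by $e^{-r|\cdot|^{1/s}}$, divide by $\alpha!^sh^{|\alpha|}$, and take the supremum over $x$ and $\alpha$.

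For the first piece I apply the $\Gamma^\infty_s$ bound with $h'=r/2$, giving
\[
e^{-r|x|^{1/s}}\,|(1-g_\ep)\,\partial^\alpha f(x)|\le C_{r/2}\,\ep_0^{|\alpha|}\alpha!^s\,(1-e^{-\ep|x|^2})\,e^{-\frac r2|x|^{1/s}} .
\]
The scalar factor $\sup_x(1-e^{-\ep|x|^2})e^{-\frac r2|x|^{1/s}}$ tends to $0$ as $\ep\to0^+$: cutting at $|x|=R$ it is at most $\ep R^2$ on $|x|\le R$ and at most $e^{-\frac r2R^{1/s}}$ on $|x|>R$, and both can be made arbitrarily small by first choosing $R$ large and then $\ep$ small. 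Taking $h\ge\ep_0$ makes $(\ep_0/h)^{|\alpha|}\le1$, so this piece contributes a quantity that vanishes uniformly in $\alpha$.

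The substantial part is the remainder sum, for which the key input is the Gaussian derivative estimate
\[
|\partial^\beta g_\ep(x)|\lesssim (2\ep)^{|\beta|/2}\sqrt{\beta!}\, ,
\]
a consequence of $e^{-|\cdot|^2}\in\maclS_{\frac12}(\rr d)$ (equivalently, of Cram\'er's inequality for Hermite polynomials) together with the rescaling $x\mapsto\sqrt\ep\,x$. Inserting this and the $\Gamma^\infty_s$ bound (again with $h'=r/2$) reduces everything to showing that
\[
\frac1{\alpha!^sh^{|\alpha|}}\sum_{0<\beta\le\alpha}\binom\alpha\beta(2\ep)^{|\beta|/2}\sqrt{\beta!}\,\ep_0^{|\alpha-\beta|}(\alpha-\beta)!^s\longrightarrow0
\]
uniformly in $\alpha$. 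Using $\binom\alpha\beta(\alpha-\beta)!^s/\alpha!^s=(\alpha!/(\alpha-\beta)!)^{1-s}/\beta!$ and the elementary bound $\alpha!/(\alpha-\beta)!=\binom\alpha\beta\,\beta!\le2^{|\alpha|}\beta!$, the general term is dominated by $2^{\max(1-s,0)|\alpha|}\beta!^{\frac12-s}(2\ep)^{|\beta|/2}\ep_0^{|\alpha-\beta|}h^{-|\alpha|}$; here the hypothesis $s\ge\frac12$ is used precisely to guarantee $\beta!^{\frac12-s}\le1$ (the case $s\ge1$ being even simpler, as then $(\alpha!/(\alpha-\beta)!)^{1-s}\le1$). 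The resulting sum factors over the coordinates into a product of geometric sums, yielding the clean bound $(2^{\max(1-s,0)}\ep_0/h)^{|\alpha|}\,[(1+\delta)^d-1]$, where $\delta=(\sqrt{2\ep}/\ep_0)(1-\sqrt{2\ep}/\ep_0)^{-1}\to0$ for $\ep$ small. Choosing $h\ge2^{\max(1-s,0)}\ep_0$ forces the first factor to be $\le1$, leaving the $\alpha$-independent remainder $(1+\delta)^d-1$, which tends to $0$.

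I expect the combinatorial estimate in the last step to be the main obstacle: one must dominate the ``convolution'' of the Gevrey weight $(\alpha-\beta)!^s$ against the Hermite weight $\sqrt{\beta!}$ uniformly in $\alpha$ while still retaining a factor that vanishes with $\ep$. The mechanism that resolves it is that every nonzero $\beta$ carries a factor $(2\ep)^{|\beta|/2}$, so that after the geometric summation a fixed power $(\cdot/h)^{|\alpha|}$ splits off, to be absorbed by taking $h$ large, leaving only the genuinely small and $\alpha$-independent quantity $(1+\delta)^d-1$. Both required choices of $h$ are compatible since $2^{\max(1-s,0)}\ge1$, so a single $h$ depending only on $\ep_0$ and $s$ serves for all $r>0$, as the definition of convergence requires.
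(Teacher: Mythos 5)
Your proof is correct and follows essentially the same route as the paper's: the same Leibniz decomposition isolating the $\beta=0$ term, the same Cram\'er-type bound $|\partial^\beta e^{-\ep |\cdot |^2}|\lesssim \ep ^{|\beta |/2}\sqrt{\beta !}$ coming from $e^{-|\cdot |^2}\in \maclS _{1/2}$, and the same absorption of a $C^{|\alpha |}$ factor by enlarging $h$. The only difference is cosmetic: the paper controls the factorials directly via $\beta !^{1/2}(\alpha -\beta )!^{s}\le \beta !^{s}(\alpha -\beta )!^{s}\le \alpha !^{s}$ (using $s\ge \frac 12$) and extracts a single factor $\ep ^{1/2}$ from the nonzero multi-indices, whereas you pass through $(\alpha !/(\alpha -\beta )!)^{1-s}$ and a coordinatewise geometric sum, arriving at the equivalent bound $(1+\delta )^d-1$.
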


\par

\begin{proof}
Let
$\phi _\ep = e^{-\ep |\cdo |^2}$. Then $f_\ep =\phi _\ep f$, and
there are constants $C,c,h_0>0$ which are independent of $\ep >0$ such that
$$
|\partial ^\alpha \phi _\ep (x)|
\le Ch_0^{|\alpha |}\ep ^{\frac {|\alpha |}2}\alpha !^{\frac 12}e^{-c\ep |x|^2}.
$$
For conveniency we also let
$$
\nm f{s,h,r ,\alpha } \equiv
\frac {\nm {e^{-r |\cdo | ^{\frac 1s}} \partial ^\alpha f}{L^\infty}}
{\alpha !^sh^{|\alpha |}}.
$$
Then $\nm f{s,h,r}= \sup _\alpha \nm f{s,h,r ,\alpha }$.

\par

By Leibniz rule we get
$$
\partial ^\alpha f_\ep = \sum _{\gamma \le \alpha} {\alpha \choose \gamma}
\partial ^\gamma \phi _\ep \cdot \partial ^{\alpha -\gamma}f.
$$
Hence, if $\ep <r\le 1$, $s=\frac 12$ and $h_1>0$, then
\begin{equation*}
\nm {f_\ep -f}{s,h_1,r ,\alpha} \le J_{1}(s,h_1,r, \ep )+J_{2}(s,h_1,\ep ,\alpha),
\end{equation*}
where
\begin{align*}
J_{1}(s,h_1,r, \ep ) &= \sup _{\alpha \in \nn d}
\frac 
{\nm {e^{-r|\cdo |^2}(1-e^{-\ep |\cdo |^2})\partial ^\alpha f}{L^\infty}}
{\alpha !^{s}h_1^{|\alpha |}}
\intertext{and}
J_{2}(s,h_1,\ep ,\alpha) &= (\alpha !^{s}h_1^{|\alpha |})^{-1}
\sum _{0\neq  \gamma \le \alpha} {\alpha \choose \gamma}
\nm {\partial ^\gamma \phi _\ep}{L^\infty} \nm {\partial ^{\alpha -\gamma}f}{L^\infty} .
\end{align*}

\par

Evidently, since $f\in  \Gamma _{s}^{\infty ,h}(\rr d)$, it follows by
straight-forward
estimates that $J_{1}(s,h_1,r, \ep )\to 0$ as
$\ep \to 0^+$. For $J_{2}(s,h_1,\ep ,\alpha)$ we have
\begin{multline*}
J_{2}(s,h_1,\ep ,\alpha) \lesssim (\alpha !^{s}h_1^{|\alpha |})^{-1}
\sum _{0\neq  \gamma \le \alpha} {\alpha \choose \gamma}
h_0^{|\gamma |}\ep ^{\frac {|\gamma |}2}\gamma !^{s}
h^{|\alpha -\gamma |}(\alpha -\gamma )!^{s}
\\[1ex]
\le
\frac {\ep ^{\frac 12}}{h_1^{|\alpha |}} \sum _{\gamma \le \alpha}
{\alpha \choose \gamma} h_0^{|\gamma |}h^{|\alpha -\gamma |}
= \ep ^{\frac 12}\left ( \frac {h_0+h}{h_1}\right )^{|\alpha |} \le \ep ^{\frac 12},
\end{multline*}
provided $h_1$ is chosen larger than $h_0+h$. This gives the result.
\end{proof}

\par

The next result concerns mapping properties of $\Gamma ^\infty _s$
spaces under trace operators. Let $V_1$ and $V_2$ be vector
spaces such that
\begin{equation}\label{Eq:DirectSum}
V_1,V_2\subseteq \rr d
\quad \text{and}\quad
V_1\oplus V_2 =\rr d,
\end{equation}
and let $x_0\in V_2$ be fixed. Then the trace operator
$T_{\overline V,x_0}$ from $C^\infty (\rr d)$ to $C^\infty (V_1)$
is defined by the formulae
$$
(T_{\overline V,x_0}f)(y) = f(y,x_0),\quad
y\in V_1,\quad \overline V=(V_1,V_2).
$$

\par

\begin{prop}\label{TraceProp}
Let $\overline V=(V_1,V_2)$ and $x_0\in V_2$, where
$V_1$ and $V_2$ are as in \eqref{Eq:DirectSum}. Then
$T_{\overline V,x_0}$ restricts to a continuous mapping
from $\Gamma _s^\infty (\rr d)$ to $\Gamma _s^\infty (V_1)$.
\end{prop}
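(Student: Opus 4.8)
The plan is to reduce the statement to a direct estimate on derivatives of the restricted function, exploiting the product structure $V_1\oplus V_2=\rr d$ and the definition of $\Gamma^\infty_s$. First I would fix coordinates adapted to the decomposition: choose linear coordinates $(y,z)\in V_1\times V_2$ so that a point $x\in\rr d$ is written $x=y+z$, and identify derivatives $\partial^\alpha$ on $\rr d$ with mixed derivatives $\partial_y^{\alpha'}\partial_z^{\alpha''}$. Since $f\in\Gamma^\infty_s(\rr d)$ means that for some $\ep>0$ the estimate $|\partial^\alpha f(x)|\lesssim\ep^{|\alpha|}\alpha!^s e^{h|x|^{1/s}}$ holds \emph{for every} $h>0$, I would record that the restriction $T_{\overline V,x_0}f(y)=f(y,x_0)$ satisfies $\partial_y^{\alpha'}(T_{\overline V,x_0}f)(y)=(\partial_y^{\alpha'}\partial_z^0 f)(y,x_0)$, so only pure $V_1$-derivatives of $f$ evaluated along the affine slice $z=x_0$ enter.

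The key step is then the estimate $|\partial_y^{\alpha'}(T_{\overline V,x_0}f)(y)| =|(\partial_y^{\alpha'}f)(y,x_0)|\lesssim\ep^{|\alpha'|}(\alpha'!)^s e^{h|(y,x_0)|^{1/s}}$ for every $h>0$. The remaining work is to absorb the fixed quantity $|x_0|$ into the exponential on $V_1$. Here I would use the elementary inequality $|(y,x_0)|^{1/s}\lesssim |y|^{1/s}+|x_0|^{1/s}$, valid up to a constant depending only on $s$ and on the (fixed) linear change of coordinates relating the orthogonal and the $V_1\oplus V_2$ splittings; this is the standard sub-additivity-type bound for $t\mapsto t^{1/s}$ combined with $|(y,x_0)|\le C(|y|+|x_0|)$. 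Consequently $e^{h|(y,x_0)|^{1/s}}\le e^{h'|x_0|^{1/s}}e^{h'|y|^{1/s}}$ for a suitable $h'$ comparable to $h$, and since $x_0$ is fixed the factor $e^{h'|x_0|^{1/s}}$ is an absolute constant. As $h>0$ is arbitrary, so is the resulting exponent on $V_1$, and we obtain $|\partial_y^{\alpha'}(T_{\overline V,x_0}f)(y)|\lesssim\ep^{|\alpha'|}(\alpha'!)^s e^{h'|y|^{1/s}}$ for every $h'>0$, which is exactly the defining condition for $T_{\overline V,x_0}f\in\Gamma^\infty_s(V_1)$ with the same $\ep$.

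Continuity then follows by tracking constants through the same computation: the seminorm of $T_{\overline V,x_0}f$ on $V_1$ is bounded by the corresponding seminorm of $f$ on $\rr d$ times the fixed factor $e^{h'|x_0|^{1/s}}$ and a factor reflecting the linear coordinate change (bounding $|\partial_y^{\alpha'}|$-type derivatives by the Euclidean ones on $\rr d$, which introduces only a uniform constant to the power $|\alpha'|$ that can be absorbed by enlarging $\ep$ or $h'$). I expect the only genuinely delicate point to be handling this change of coordinates cleanly, since $V_1$ and $V_2$ need not be orthogonal and the directional derivatives along $V_1$ are linear combinations of the standard partials; one must verify that passing from $\partial^\alpha$ on $\rr d$ to $\partial_y^{\alpha'}$ on $V_1$ costs at most a constant to the power $|\alpha'|$, so that the Gevrey factor $(\alpha'!)^s$ and the geometric factor $\ep^{|\alpha'|}$ are preserved after merely adjusting $\ep$. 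Everything else is a routine exponential bookkeeping argument.
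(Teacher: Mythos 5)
Your argument is correct and is precisely the ``straight-forward consequence of the definitions'' that the paper invokes: the paper gives no written proof, leaving the verification to the reader, and your verification (pure $V_1$-derivatives along the slice, sub-additivity of $t\mapsto t^{1/s}$ to absorb the fixed $e^{h'|x_0|^{1/s}}$ factor, and a $C^{|\alpha'|}$ loss from the linear coordinate change absorbed into $\ep$) is exactly the intended one. Your own closing remark already identifies and resolves the only delicate point, namely that $V_1,V_2$ need not be orthogonal, so nothing is missing.
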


\par

Proposition \ref{TraceProp} is a straight-forward consequence
of the definitions. The verifications are left for the reader.

\section{The short-time Fourier transform and
regularity}\label{sec2}

\par

In this section we deduce equivalences between conditions on
the short-time Fourier transforms of functions or distributions and estimates
on derivatives. 

\par

In what follows we let $\kappa$ be defined as
\begin{equation} \label{kappadef}
\kappa (r) =
\begin{cases}
1\quad &\text{when}\ r\le 1,
\\[1ex]
2^{r-1}\quad &\text{when}\ r> 1.
\end{cases}
\end{equation}

\par

In the sequel we shall frequently use the well known inequality
$$
|x+y|^{\frac 1s} \leq \kappa(s^{-1}) (|x|^{\frac 1s} + |y|^{\frac 1s}), \qquad s >0,\quad
x,y\in \rr d.
$$

\par

\begin{prop}\label{prop1}
Let $\phi \in \Sigma _s(\rr d)\setminus 0$, $s>\frac 12$, $h\in \mathbf R$ and let
$f\in \cS '_{\frac 12}(\rr d)$. Then the following is true:
\begin{enumerate}
\item If $f\in C^\infty (\rr d)$ and satisfies
\begin{equation}\label{GelfRelCond1}
|\partial ^\alpha f(x)|\lesssim e^{h|x|^{\frac 1s}}\ep ^{|\alpha |}(\alpha !)^s,
\end{equation}
for every $\ep >0$ (resp. for some $\ep >0$), then
\begin{equation}\label{stftcond1}
|V_\phi f(x,\xi )|\lesssim e^{\kappa (s^{-1})h|x|^{\frac 1s}-\ep |\xi |^{\frac 1s}},
\end{equation}
for every $\ep >0$ (resp. for some new $\ep >0$);

\vrum

\item If 
\begin{equation}\label{GelfRelCond1A}
|V_\phi f(x,\xi )|\lesssim e^{h|x|^{\frac 1s}-\ep |\xi |^{\frac 1s}},
\end{equation}
for every $\ep >0$ (resp. for some $\ep >0$), then $f\in C^\infty (\rr d)$ and satisfies
$$
|\partial ^\alpha f(x)|\lesssim e^{\kappa (s^{-1})h|x|^{\frac 1s}}\ep ^{|\alpha |}(\alpha !)^s,
$$
for every $\ep >0$ (resp. for some new $\ep >0$).
\end{enumerate}
\end{prop}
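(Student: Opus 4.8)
The plan is to prove both implications directly from the integral representation of the short-time Fourier transform, using the exponential-decay characterization of the window $\phi\in\Sigma _s$ (which holds for every $r,\ep>0$), the elementary inequality $|y|^{\frac 1s}\le\kappa(s^{-1})(|y-x|^{\frac 1s}+|x|^{\frac 1s})$ recalled above, and two standard Gelfand--Shilov conversion estimates. The first is that a bound $|\xi^\beta g(\xi)|\lesssim \delta^{|\beta|}(\beta!)^s$ valid for all $\beta$ is equivalent to an exponential bound $|g(\xi)|\lesssim e^{-c|\xi|^{\frac 1s}}$, where $c$ can be taken large when $\delta$ is small; the second is that $\int_{\rr d}|\xi|^N e^{-\ep|\xi|^{\frac 1s}}\,d\xi\lesssim C_\ep^{\,N}(N!)^s$ with $C_\ep\to 0$ as $\ep\to+\infty$. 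Careful tracking of the constants in these two estimates is what produces the stated correspondence between the ``for every $\ep$'' and the ``for some $\ep$'' formulations.

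For part (1) I would start from the convergent integral $V_\phi f(x,\xi)=(2\pi)^{-\frac d2}\int_{\rr d} f(y)\overline{\phi(y-x)}\,e^{-i\scal y\xi}\,dy$, writing $\xi^\beta e^{-i\scal y\xi}=i^{|\beta|}\partial_y^\beta e^{-i\scal y\xi}$ and integrating by parts $|\beta|$ times. Applying the Leibniz rule to $\partial_y^\beta(f(y)\overline{\phi(y-x)})$ gives
$$
|\xi^\beta V_\phi f(x,\xi)|\lesssim\sum_{\gamma\le\beta}\binom{\beta}{\gamma}\int_{\rr d}|\partial^\gamma f(y)|\,|\partial^{\beta-\gamma}\phi(y-x)|\,dy.
$$
Inserting \eqref{GelfRelCond1} for $f$ (with constant $\ep_1$) and the $\Sigma_s$-bound $|\partial^{\beta-\gamma}\phi(y-x)|\lesssim \ep_2^{|\beta-\gamma|}((\beta-\gamma)!)^s e^{-r|y-x|^{\frac 1s}}$, and splitting the weight via $e^{h|y|^{\frac 1s}}\le e^{\kappa(s^{-1})h|x|^{\frac 1s}}e^{\kappa(s^{-1})h|y-x|^{\frac 1s}}$, I choose $r>\kappa(s^{-1})h$ so the $y$-integral converges to a constant. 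Using $\gamma!(\beta-\gamma)!\le\beta!$ and summing the Leibniz series gives $|\xi^\beta V_\phi f(x,\xi)|\lesssim e^{\kappa(s^{-1})h|x|^{\frac 1s}}(\beta!)^s(\ep_1+\ep_2)^{|\beta|}$. The first conversion estimate then yields \eqref{stftcond1}; taking $\ep_1,\ep_2$ arbitrarily small produces decay for every $\ep$, while a fixed $\ep_1$ gives decay for some $\ep$.

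For part (2) I would use a reconstruction formula: fixing a dual window $\psi\in\Sigma_s$ with $(\psi,\phi)_{L^2}\ne 0$, one has $f(y)=c\iint V_\phi f(x,\xi)\,\psi(y-x)e^{i\scal y\xi}\,dx\,d\xi$, where \eqref{GelfRelCond1A} guarantees absolute convergence. Differentiating under the integral and using $\partial_y^\alpha(\psi(y-x)e^{i\scal y\xi})=\sum_{\gamma\le\alpha}\binom{\alpha}{\gamma}(\partial^\gamma\psi)(y-x)(i\xi)^{\alpha-\gamma}e^{i\scal y\xi}$ gives
$$
|\partial^\alpha f(y)|\lesssim\sum_{\gamma\le\alpha}\binom{\alpha}{\gamma}\iint|V_\phi f(x,\xi)|\,|\partial^\gamma\psi(y-x)|\,|\xi|^{|\alpha-\gamma|}\,dx\,d\xi.
$$
The $x$-integral is handled as in part (1), splitting $e^{h|x|^{\frac 1s}}$ and taking the decay rate of $\psi$ large enough to gain $e^{\kappa(s^{-1})h|y|^{\frac 1s}}$, while the $\xi$-integral is controlled by the second conversion estimate, $\int|\xi|^{|\alpha-\gamma|}e^{-\ep|\xi|^{\frac 1s}}\,d\xi\lesssim C_\ep^{|\alpha-\gamma|}((\alpha-\gamma)!)^s$. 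Combining $\gamma!(\alpha-\gamma)!\le\alpha!$ and summing the Leibniz series gives $|\partial^\alpha f(y)|\lesssim e^{\kappa(s^{-1})h|y|^{\frac 1s}}(\alpha!)^s(\ep_2+C_\ep)^{|\alpha|}$, the desired bound. Since $\ep_2$ is arbitrary and $C_\ep\to0$ as $\ep\to\infty$, the ``for every'' hypothesis yields the conclusion for every $\ep$, whereas a single $\ep$ yields it for some new $\ep$.

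The main obstacle, and the only genuinely nonroutine ingredient, is the pair of Gelfand--Shilov conversion estimates together with the requirement that their constants be tracked accurately enough to recover the precise quantifier structure; granting these, the rest reduces to integration by parts, the Leibniz rule, the submultiplicative inequality for $|\cdot|^{\frac 1s}$, and the factorial inequality $\gamma!(\beta-\gamma)!\le\beta!$. A secondary technical point is justifying differentiation under the integral and the validity of the reconstruction formula in the present $\cS_{\frac 12}'$ setting, which follows from the absolute convergence secured by the decay of $\phi$, $\psi$ and of $V_\phi f$.
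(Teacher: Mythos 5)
Your proposal is correct and follows essentially the same route as the paper: part (1) amounts to the paper's observation that $y\mapsto f(y+x)\overline{\phi (y)}$ lies in $\Sigma _s$ uniformly in $x$ (your integration by parts is exactly the standard proof of the Fourier-decay characterization that the paper invokes for this function), and part (2) is the paper's argument almost verbatim --- inversion formula, differentiation under the integral, Leibniz rule, the moment estimate for $|\xi |^N e^{-\ep |\xi |^{1/s}}$, and the splitting $|y|^{1/s}\le \kappa (s^{-1})(|x|^{1/s}+|y-x|^{1/s})$, with the same bookkeeping of the small parameters to recover the quantifier structure. I see no gaps.
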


\par


\par

\begin{proof}
We only prove the assertion when \eqref{GelfRelCond1} or \eqref{GelfRelCond1A}
are true for every $\ep >0$, leaving the straight-forward modifications of the
other cases to the reader. 

\par

Assume that \eqref{GelfRelCond1} holds. Then for every $x \in \rr d$ the function
$$
y\mapsto F_x(y)\equiv f(y+x)\overline{\phi (y)}
$$
belongs to $\Sigma _s$, and
$$
|\partial _y^\alpha F_x(y)| \lesssim e^{\kappa (s^{-1})h|x|^{\frac 1s}}e^{-h_0|y|^{\frac 1s}}
\ep ^{|\alpha |}(\alpha !)^s,
$$
for every $\ep , h_0 >0$. In particular,
\begin{equation}\label{FxEsts}
|F_x(y)|\lesssim e^{\kappa (s^{-1})h|x|^{\frac 1s}}e^{-h_0|y|^{\frac 1s}}\quad \text{and}\quad
|\widehat F_x(\xi )|\lesssim e^{\kappa (s^{-1})h|x|^{\frac 1s}}e^{-h_0|\xi |^{\frac 1s}},
\end{equation}
for every $h_0>0$. Since $|V_\phi f(x,\xi )| = |\widehat F_x(\xi )|$, the estimate
\eqref{stftcond1} follows from the second inequality in \eqref{FxEsts}. This
proves (1).

\par

Next we prove (2). By the inversion formula we get
$$
f(x) = (2\pi)^{-\frac d2} \nm \phi{L^2}^{-2} \iint _{\rr {2d}} V_\phi f(y,\eta )
\phi (x-y)e^{i\scal x\eta}\, dyd\eta .
$$
By differentiation and the fact that $\phi \in \Sigma _s$ we get
\begin{multline*}
|\partial ^\alpha f(x)| \asymp \left |
\sum _{\beta \le \alpha} {\alpha \choose \beta} i^{|\beta|}
\iint _{\rr {2d}} \eta ^\beta V_\phi f(y,\eta )  (\partial ^{\alpha -\beta }\phi )(x-y)
e^{i\scal x\eta}\, dyd\eta
\right |
\\[1ex]
\le
\sum _{\beta \le \alpha} {\alpha \choose \beta}
\iint _{\rr {2d}} |\eta ^\beta V_\phi f(y,\eta )  (\partial ^{\alpha -\beta }\phi )(x-y)|
\, dyd\eta
\\[1ex]
\lesssim
\sum _{\beta \le \alpha} {\alpha \choose \beta}
\iint _{\rr {2d}} |\eta ^\beta e^{-\ep |\eta |^{\frac 1s}}e^{h |y|^{\frac 1s}}  (\partial ^{\alpha -\beta }\phi )(x-y)|
\, dyd\eta
\\[1ex]
\lesssim
\sum _{\beta \le \alpha} {\alpha \choose \beta}
\ep _2 ^{|\alpha - \beta |} (\alpha -\beta )!^s
\iint _{\rr {2d}} |\eta ^\beta |e^{-\ep |\eta |^{\frac 1s}}e^{h |y|^{\frac 1s}}
e^{-\ep _1|x-y|^{\frac 1s}} \, dyd\eta ,
\end{multline*}
for every $\ep _1>0$ and $\ep _2>0$. Since
$$
|\eta ^\beta e^{-\ep |\eta |^{\frac 1s}}|\lesssim \ep _2^{|\beta|}(\beta !)^se^{-\ep |\eta |^{\frac 1s}/2},
$$
we get
\begin{multline*}
|\partial ^\alpha f(x)|
\\[1ex]
\lesssim \ep _2 ^{|\alpha |}
\sum _{\beta \le \alpha} {\alpha \choose \beta}
(\beta !
(\alpha -\beta )!)^s \iint _{\rr {2d}} e^{-\ep |\eta |^{\frac 1s}/2} e^{h |y|^{\frac 1s}}
e^{-\ep _1|x-y|^{\frac 1s}} \, dyd\eta
\\[1ex]
\lesssim (2^{1-s}\ep _2)^{|\alpha |}(\alpha !)^s \int _{\rr n} e^{h |y|^{\frac 1s}}
e^{-\ep _1|x-y|^{\frac 1s}} \, dy
\end{multline*}

\par

Since $|y|^{\frac 1s}\le \kappa (s^{-1})(|x|^{\frac 1s}+|y-x|^{\frac 1s})$ and $\ep _1$ can be chosen
arbitrarily large, it follows from the last estimate that
$$
|\partial ^\alpha f(x)| \lesssim (2\ep _2)^{|\alpha |}(\alpha !)^s
e^{h \kappa (s^{-1})|x|^{\frac 1s}},
$$
for every $\ep _2>0$. This gives the result.
\end{proof}

\par

As a consequence of the previous result we get the following.

\par

\begin{prop}\label{prop2}
Let $\phi \in \Sigma_s(\rr d)\setminus 0$, $s > \frac 12$ and let
$f\in \cS'_{\frac 12}(\rr d)$. Then the following conditions are equivalent:
\begin{enumerate}
\item $f\in \Gamma_{0,s}^\infty (\rr d)$ (resp. $f\in \Gamma_{1,s}^\infty (\rr d)$);

\vrum

\item there exists a constant $h>0$ such that
$$
|V_\phi f(x,\xi )|\lesssim e^{h |x|^{\frac 1s}-\ep |\xi |^{\frac 1s}},
$$
for every $\ep >0$ (resp. for some $\ep >0$).
\end{enumerate}
\end{prop}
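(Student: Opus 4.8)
The plan is to obtain Proposition \ref{prop2} as an immediate corollary of Proposition \ref{prop1}, the only task being to reconcile the quantifier bookkeeping in the definitions of $\Gamma^\infty_{0,s}$ and $\Gamma^\infty_{1,s}$ with the estimates already established. Recall that $f\in \Gamma^\infty_{0,s}(\rr d)$ means that \eqref{symbols} holds for \emph{some} $h>0$ and \emph{every} $\ep>0$, while $f\in \Gamma^\infty_{1,s}(\rr d)$ means that \eqref{symbols} holds for \emph{some} $h>0$ and \emph{some} $\ep>0$; in both cases $h$ is existentially quantified, and the two classes differ only in whether $\ep$ is universally or existentially quantified. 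The same existential quantifier on $h$ appears in condition (2), which requests the short-time Fourier transform decay \eqref{GelfRelCond1A} for every $\ep>0$ (resp.\ for some $\ep>0$).

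First I would treat the $\Gamma^\infty_{0,s}$ case. For $(1)\Rightarrow(2)$, fix an $h>0$ for which \eqref{symbols} holds for every $\ep>0$; this is exactly hypothesis \eqref{GelfRelCond1} in the ``for every $\ep$'' form, so Proposition \ref{prop1}(1) yields \eqref{stftcond1}, namely the decay bound with constant $\kappa(s^{-1})h$ in the exponent, for every $\ep>0$. Setting $h'=\kappa(s^{-1})h$ gives condition (2). For the converse $(2)\Rightarrow(1)$, fix an $h>0$ realising the decay \eqref{GelfRelCond1A} for every $\ep>0$ and invoke Proposition \ref{prop1}(2), which returns $f\in C^\infty(\rr d)$ together with \eqref{symbols} (with $h$ replaced by $\kappa(s^{-1})h$) for every $\ep>0$; hence $f\in \Gamma^\infty_{0,s}(\rr d)$.

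The $\Gamma^\infty_{1,s}$ case is handled identically using the ``for some $\ep$'' (parenthetical) versions of Proposition \ref{prop1}(1) and (2): starting from \eqref{symbols} for some $\ep,h>0$, Proposition \ref{prop1}(1) delivers the decay for some new $\ep>0$ with the constant $\kappa(s^{-1})h$, and Proposition \ref{prop1}(2) reverses this implication, again with $h$ scaled by the fixed factor $\kappa(s^{-1})$.

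I would stress that there is essentially no obstacle here beyond keeping the logic straight: since $\kappa(s^{-1})$ is a fixed positive constant depending only on $s$, the replacement $h\mapsto \kappa(s^{-1})h$ is harmless precisely because $h$ is existentially quantified in both formulations. The substantive analytic content --- converting pointwise derivative bounds into exponential decay of the short-time Fourier transform and back, through the Fourier transform of the localised function $F_x(y)=f(y+x)\overline{\phi(y)}$ and the inversion formula --- is entirely contained in Proposition \ref{prop1}, so nothing new needs to be estimated at this stage.
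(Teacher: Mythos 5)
Your proposal is correct and coincides with the paper's own treatment: the paper states Proposition \ref{prop2} with the single remark that it is ``a consequence of the previous result,'' i.e.\ exactly the reduction to Proposition \ref{prop1} that you carry out, with the quantifier bookkeeping on $h$ and $\ep$ and the harmless rescaling $h\mapsto\kappa(s^{-1})h$ absorbed by the existential quantifier on $h$. Nothing further is needed.
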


\par

By similar arguments we also get the following.

\par

\begin{prop}\label{prop2'}
Let $\phi \in \cS _s(\rr d)\setminus 0$, $s\ge \frac 12$ and let
$f\in \cS '_{\frac 12}(\rr d)$. Then the following conditions are equivalent:
\begin{enumerate}
\item $f\in \Gamma_s^\infty (\rr d)$;

\vrum

\item there exists a constant $h>0$ such that
$$
|V_\phi f(x,\xi )|\lesssim e^{\ep |x|^{\frac 1s}-h |\xi |^{\frac 1s}},
$$
for every $\ep >0$.
\end{enumerate}
\end{prop}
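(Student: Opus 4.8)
The plan is to mirror the proof of Proposition \ref{prop1}, writing $V_\phi f(x,\cdot)$ as the Fourier transform of $F_x(y)=f(y+x)\overline{\phi (y)}$ and tracking carefully which constants are fixed and which are free. The essential new feature is that $\phi$ now lies in the inductive space $\cS_s$ (which is why the range is $s\ge\frac12$ rather than $s>\frac12$) instead of the projective space $\Sigma_s$, so the bounds for $\phi$ come with \emph{fixed} constants; these will supply the single fixed decay rate $h$ in $\xi$ demanded by condition (2).

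For (1) $\Rightarrow$ (2): since $f\in\Gamma_s^\infty (\rr d)$ there is a fixed $\ep_0>0$ with $|\partial^\alpha f(x)|\lesssim \ep_0^{|\alpha|}\alpha!^s e^{h|x|^{\frac1s}}$ for \emph{every} $h>0$, while $\phi\in\cS_s$ gives fixed $\ep_1,h_1>0$ with $|\partial^\alpha\phi(y)|\lesssim \ep_1^{|\alpha|}\alpha!^s e^{-h_1|y|^{\frac1s}}$. Applying Leibniz' rule to $F_x$, the inequality $|y+x|^{\frac1s}\le\kappa(s^{-1})(|x|^{\frac1s}+|y|^{\frac1s})$, and the bound $\sum_{\beta\le\alpha}\binom{\alpha}{\beta}\ep_0^{|\beta|}\beta!^s\ep_1^{|\alpha-\beta|}(\alpha-\beta)!^s\le(\ep_0+\ep_1)^{|\alpha|}\alpha!^s$, I obtain for any prescribed $\ep>0$
$$
|\partial_y^\alpha F_x(y)|\lesssim (\ep_0+\ep_1)^{|\alpha|}\alpha!^s\, e^{\ep|x|^{\frac1s}}e^{-\frac{h_1}2|y|^{\frac1s}},
$$
after choosing the free growth constant $h$ of $f$ so small that $h\kappa(s^{-1})\le\min(\ep,h_1/2)$. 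Thus $F_x\in\cS_s$ with derivative constant $\ep_0+\ep_1$ and decay rate $h_1/2$ both \emph{fixed}, only the prefactor $e^{\ep|x|^{\frac1s}}$ depending on $\ep$. Passing to the Fourier transform exactly as in the derivation of \eqref{FxEsts} then yields $|V_\phi f(x,\xi)|=|\widehat F_x(\xi)|\lesssim e^{\ep|x|^{\frac1s}-h|\xi|^{\frac1s}}$ with a rate $h>0$ determined only by $\ep_0+\ep_1$ and $h_1$, hence independent of $\ep$, which is precisely (2).

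For (2) $\Rightarrow$ (1): starting from the inversion formula for $V_\phi$ used in Proposition \ref{prop1}(2), I differentiate under the integral sign and estimate
$$
|\partial^\alpha f(x)|\lesssim \sum_{\beta\le\alpha}\binom{\alpha}{\beta}\iint|\eta^\beta|\,|V_\phi f(y,\eta)|\,|(\partial^{\alpha-\beta}\phi)(x-y)|\,dy\,d\eta.
$$
Inserting the hypothesis $|V_\phi f(y,\eta)|\lesssim e^{\ep|y|^{\frac1s}-h|\eta|^{\frac1s}}$ (with $h$ fixed, $\ep$ free), the $\cS_s$-bound for $\phi$, and the standard estimate $|\eta^\beta|e^{-h|\eta|^{\frac1s}}\lesssim \ep_2^{|\beta|}\beta!^s e^{-\frac h2|\eta|^{\frac1s}}$ (with $\ep_2$ fixed, depending on the fixed $h$), the same binomial bound collapses the $\beta$-sum into a factor $(\ep_1+\ep_2)^{|\alpha|}\alpha!^s$, the $\eta$-integral converges to a fixed constant, and the $y$-integral, after $|y|^{\frac1s}\le\kappa(s^{-1})(|x|^{\frac1s}+|y-x|^{\frac1s})$, produces the factor $e^{\ep\kappa(s^{-1})|x|^{\frac1s}}$ once $\ep$ is taken small enough that $\ep\kappa(s^{-1})$ lies below the decay rate of $\phi$. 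This gives $|\partial^\alpha f(x)|\lesssim(\ep_1+\ep_2)^{|\alpha|}\alpha!^s e^{\ep\kappa(s^{-1})|x|^{\frac1s}}$ with $\ep_1+\ep_2$ fixed; letting $\ep$ range over $(0,\infty)$ realizes every growth rate in $x$, which is the defining condition of $\Gamma_s^\infty (\rr d)$.

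The point that needs care is the bookkeeping of quantifiers, since in $\Gamma_s^\infty$ the derivative constant is fixed while the $x$-growth rate is free, the reverse distribution from $\Sigma_s$. Concretely, in (1) $\Rightarrow$ (2) one must check that the Fourier decay rate $h$ comes out genuinely independent of $\ep$ (it is forced to be fixed precisely because $\phi\in\cS_s$ contributes the fixed $h_1$), and in (2) $\Rightarrow$ (1) that the free constant $\ep$ in the $x$-growth of $V_\phi f$ is exactly what delivers the ``for every $h>0$'' in the definition of $\Gamma_s^\infty$. The analytic estimates themselves are routine variants of those already carried out for Proposition \ref{prop1}.
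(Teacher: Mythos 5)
Your proof is correct and takes essentially the same route the paper intends: the paper offers no separate argument for Proposition \ref{prop2'}, saying only that it follows ``by similar arguments'' from Proposition \ref{prop1}, and your adaptation --- with the explicit bookkeeping of which constants are fixed by $\phi \in \cS _s$ and which remain free --- is exactly that argument, correctly carried out. Nothing further is needed.
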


\par

\section{Continuity and composition properties for
pseudo-differential operators}\label{sec3}

\par

In this section we deduce continuity, invariance and composition properties for
pseudo-differential operators with symbols in the classes considered in the
previous sections. In the first part we show that for any such class $S$, the
set $\op _A (S)$ of pseudo-differential operators is independent of the 
matrix $A$. Thereafter we deduce that such operators are continuous
on Gelfand-Shilov spaces and their duals. Finally we show that these
operator classes are closed under compositions.

\par

\subsection{Invariance properties}

\par

An important ingredient in these considerations concerns mapping properties
for the operator $e^{i\scal {AD_\xi}{D_x}}$. In fact we have the following.

\par

\begin{thm}\label{ThmCalculiTransf}
Let $s\ge \frac 12$, $\omega \in \mascP _s(\rr {2d})$, $s_1,s_2,t_1,t_2>0$ be such that
$$
s_1+t_1\ge 1,\quad s_2+t_2\ge 1,\quad s_1\le s_2
\quad \text{and}\quad
t_2\le t_1,
$$
and let $A$ be a real $d\times d$ matrix. Then the following is true:
\begin{enumerate}
\item $e^{i\scal {AD_\xi}{D_x}}$ on $\mascS (\rr {2d})$ restricts to
a homeomorphism on $\maclS _{t_1,s_2}^{s_1,t_2}(\rr {2d})$,
and extends uniquely to a homeomorphism on
$(\maclS _{t_1,s_2}^{s_1,t_2})'(\rr {2d})$;

\vrum

\item if in addition $(s_1,t_1)\neq (\frac 12 ,\frac 12)$ and
$(s_2,t_2)\neq (\frac 12 ,\frac 12)$, then $e^{i\scal {AD_\xi}{D_x}}$
on $\mascS (\rr {2d})$ restricts to a homeomorphism on
$\Sigma _{t_1,s_2}^{s_1,t_2}(\rr {2d})$,
and extends uniquely to a homeomorphism on
$(\Sigma _{t_1,s_2}^{s_1,t_2})'(\rr {2d})$;

\vrum

\item $e^{i\scal {AD_\xi}{D_x}}$ is a homeomorphism on $\Gamma _s^\infty (\rr {2d})$.
If in addition $s>\frac 12$, then $e^{i\scal {AD_\xi}{D_x}}$ is a homeomorphism on
$\Gamma _{0,s}^\infty (\rr {2d})$;

\vrum

\item $e^{i\scal {AD_\xi}{D_x}}$ is a homeomorphism on
$S^{(\omega )}_s (\rr {2d})$.
\end{enumerate}
\end{thm}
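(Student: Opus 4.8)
The plan is to push the whole statement onto a single affine change of variables by conjugating with a partial Fourier transform. Write $\mathcal A=e^{i\scal{AD_\xi}{D_x}}$ and let $\mascF_2$ be the Fourier transform in the $\xi$-variable only. Under $\mascF_2$ the operator $D_\xi$ turns into multiplication by the dual variable $z$ while $D_x$ is unchanged, so $\scal{AD_\xi}{D_x}$ becomes $\scal{Az}{D_x}$ and $e^{i\scal{Az}{D_x}}$ is the translation $x\mapsto x+Az$. The first step is therefore to record the identity
\[
\mascF_2\,\mathcal A\,\mascF_2^{-1}=T_A,\qquad (T_AG)(x,z)=G(x+Az,z),
\]
valid on $\mascS(\rr{2d})$ and, by continuity, on $\mascS'(\rr{2d})$. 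Since $T_A^{-1}=T_{-A}$ and $\mascF_2$ is a homeomorphism on all spaces in sight, the homeomorphism property of $\mathcal A$ is reduced to that of $T_A$.

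For (1) I would invoke Proposition \ref{propBroadGSSpaceChar}: $\mascF_2$ interchanges the second lower and the second upper index, hence maps $\maclS_{t_1,s_2}^{s_1,t_2}(\rr{2d})$ homeomorphically onto $\maclS_{t_1,t_2}^{s_1,s_2}(\rr{2d})$, and it suffices to prove that $T_A$ is a homeomorphism of the latter. Estimating the defining seminorms \eqref{GSExtCond} of $T_AG$ is where the two order conditions appear. Differentiating $G(x+Az,z)$ in $z$ generates, through the chain rule, $x$-derivatives of $G$; these are governed by the exponent $s_1$ and must be reabsorbed into the $z$-derivative bound governed by $s_2$, which is possible exactly because $s_1\le s_2$. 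Dually, rewriting $x^{\beta_1}=((x+Az)-Az)^{\beta_1}$ shifts part of the $x$-weight onto extra powers of $z$; these are controlled by $t_2$ but have to be dominated by the $t_1$-weight, which works precisely because $t_2\le t_1$ (equivalently, $|x|\le|x+Az|+|A||z|$ and $e^{c|z|^{1/t_1}}\lesssim e^{h|z|^{1/t_2}}$ after enlarging $h$). The same estimates for $T_{-A}$ give continuity of the inverse. Part (2) is identical, using the $\Sigma$-version of Proposition \ref{propBroadGSSpaceChar}; the hypotheses $(s_j,t_j)\neq(\frac12,\frac12)$ are exactly what makes the spaces $\Sigma_{t_1,t_2}^{s_1,s_2}(\rr{2d})$ nontrivial and subject to the same characterization.

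For (3) and (4) the relevant classes allow exponential growth, so $\mascF_2$ leaves the setting of nice functions and a direct bookkeeping through indices is awkward; I would instead transfer the estimates to the short-time Fourier transform. Being a composition of partial Fourier transforms and the shear $T_A$, the operator $\mathcal A$ is metaplectic on $L^2(\rr{2d})$, associated with a linear symplectomorphism of $\rr{2d}\times\rr{2d}$ that leaves the frequency variable $\Xi$ fixed and shifts the position variable $X$ by a fixed linear function $L\Xi$ of the frequency. With $\phi$ a Gaussian, $\mathcal A\phi$ is again a generalised Gaussian, hence an admissible window in $\maclS_s$ (and in $\Sigma_s$ when $s>\frac12$), and metaplectic covariance yields
\[
|V_{\mathcal A\phi}(\mathcal Af)(X,\Xi)|=|V_\phi f(X-L\Xi,\Xi)|.
\]
Feeding in the characterizations of Propositions \ref{prop2} and \ref{prop2'} (and the weighted analogue for $S_s^{(\omega)}$, obtained as in Proposition \ref{prop1} from the moderateness of $\omega$), the shift produces a factor $e^{c|\Xi|^{1/s}}$ through the inequality $|X-L\Xi|^{1/s}\le\kappa(s^{-1})(|X|^{1/s}+|L\Xi|^{1/s})$. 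The key observation is that in every one of these classes the decay in $\Xi$ is required \emph{for every} $\ep>0$, so one simply reruns the estimate with $\ep$ enlarged by $c$ in order to reabsorb this factor; the weight $\omega$ is handled in the same way via $\omega(X-L\Xi)\lesssim\omega(X)e^{c|\Xi|^{1/s}}$. As $\mathcal A^{-1}=e^{-i\scal{AD_\xi}{D_x}}$ is of the same type, each map is a homeomorphism, and the restriction $s>\frac12$ in the $\Gamma_{0,s}^\infty$-case is exactly the range in which the window of Proposition \ref{prop2} exists.

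Finally, the extension to the dual spaces is by transposition. A short computation with the Fourier multiplier $e^{i\scal{Az}{\zeta}}$ (invariant under $(\zeta,z)\mapsto(-\zeta,-z)$) shows that $\mathcal A$ is symmetric for the bilinear pairing, equivalently $\mathcal A^*=e^{-i\scal{AD_\xi}{D_x}}$ for the sesquilinear one; in either case the transpose is an operator of the same form. Hence the homeomorphism properties already proved on $\maclS_{t_1,s_2}^{s_1,t_2}$ and $\Sigma_{t_1,s_2}^{s_1,t_2}$ dualize to homeomorphisms on their dual spaces. I expect the genuine obstacle to be the verification that $T_A$ preserves $\maclS_{t_1,t_2}^{s_1,s_2}$: it is there that the hypotheses $s_1\le s_2$ and $t_2\le t_1$ are forced, and it is the step demanding the most careful Leibniz- and subadditivity-type estimates, the remaining parts being essentially a transcription of this fact through $\mascF_2$ or through the metaplectic covariance of the short-time Fourier transform.
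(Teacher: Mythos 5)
Your overall strategy coincides with the paper's: parts (1)--(2) are proved by conjugating with $\mascF _2$ to reduce to the shear $(x,y)\mapsto (x+Ay,y)$, and parts (3)--(4) by the covariance identity $|V_{\phi _A}(e^{i\scal {AD_\xi}{D_x}}a)(x,\xi ,\eta ,y)|=|V_\phi a(x-Ay,\xi -A^*\eta ,\eta ,y)|$ combined with the short-time Fourier transform characterizations of the symbol classes. The only real difference in (1)--(2) is that you estimate the seminorms \eqref{GSExtCond} of $T_AG$ directly by Leibniz and chain rule, whereas the paper passes to the equivalent characterization by exponential decay of $G$ and of $\widehat G$, which turns the role of $s_1\le s_2$ and $t_2\le t_1$ into a one-line subadditivity estimate; both routes are legitimate and you identify correctly where the order conditions are forced.

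There is, however, a genuine gap in your treatment of part (4) at the endpoint $s=\tfrac 12$. The STFT characterization of $S^{(\omega )}_s$ (Propositions \ref{prepalg} and \ref{SymbClassModSpace}) demands the decay $e^{-R|\Xi |^{1/s}}$ of $V_\phi a$ for \emph{every} $R>0$, because the class is projective in the derivative constant $\ep$. For $s=\tfrac 12$ this means super-Gaussian decay of arbitrary rate, which no admissible window can detect: $\Sigma _{1/2}(\rr {2d})=\{0\}$, and a Gaussian window only yields $e^{-c|\Xi |^2}$ for one fixed $c$, so the equivalence between membership in $S^{(\omega )}_{1/2}$ and the STFT bounds breaks down --- this is precisely why those propositions are stated only for $s>\tfrac 12$. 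The paper closes this case by a completely different argument: for $a\in S^{(\omega )}_{1/2}$ one expands $e^{i\scal {AD_\xi}{D_x}}a=\sum _{k\ge 0}\frac {i^k}{k!}\scal {AD_\xi}{D_x}^ka$ and estimates termwise using $(\beta +\delta )!\le 2^{|\beta +\delta |}\beta !\,\delta !$, exploiting that such symbols extend to entire functions; your proposal needs this (or an equivalent) supplement. A smaller inaccuracy: for $\Gamma _s^\infty$ the decay in $\Xi$ is only required for \emph{some} $h$, not for every $\ep$, so the absorption of the factor $e^{c\ep |\Xi |^{1/s}}$ coming from the shift $X\mapsto X-L\Xi$ works there because the growth coefficient in $X$ may be taken arbitrarily small, not because the $\Xi$-decay is projective.
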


\par

\begin{cor}
Let $s,t>0$ be such that $s\le t$. Then $e^{i\scal {AD_\xi}{D_x}}$ is
a homeomorphism on $\maclS _t^s(\rr {2d})$, $\Sigma _t^s(\rr {2d})$,
$(\Sigma _t^s)'(\rr {2d})$ and on $(\maclS _t^s)'(\rr {2d})$.
\end{cor}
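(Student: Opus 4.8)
The plan is to read the Corollary off Theorem \ref{ThmCalculiTransf} by a suitable specialization of the four indices $s_1,s_2,t_1,t_2$. First I would dispose of the degenerate cases. If $s+t<1$, then $\maclS _t^s(\rr{2d})=\{0\}$, and if in addition $(s,t)=(\frac 12,\frac 12)$, then $\Sigma _t^s(\rr{2d})=\{0\}$; in these cases the asserted homeomorphisms, and those for the duals, are trivial. Hence I may assume $s+t\ge 1$ throughout, and $(s,t)\neq(\frac 12,\frac 12)$ in the $\Sigma$-part.

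The key observation is that the isotropic space $\maclS _t^s(\rr{2d})$ coincides, as a topological vector space, with the anisotropic space $\maclS _{t,t}^{s,s}(\rr{2d})$ of \eqref{GSExtCond}. Indeed, writing $z=(x_1,x_2)$ with $x_j\in\rr d$ and splitting the multi-indices accordingly, the factorial equivalences $\alpha !\asymp \alpha _1!\,\alpha _2!$ and $\beta !\asymp \beta _1!\,\beta _2!$ (with implied constants of the form $C^{|\alpha|}$ and $C^{|\beta|}$, which are absorbable into $h$) turn the isotropic estimate \eqref{gfseminorm} on $\rr{2d}$ into \eqref{GSExtCond} with $s_1=s_2=s$ and $t_1=t_2=t$, and conversely. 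The same identification holds for the $\Sigma$-spaces and, by duality, for $(\maclS _t^s)'$ and $(\Sigma _t^s)'$.

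Next I would apply Theorem \ref{ThmCalculiTransf} with the assignment $s_1=s$, $t_1=t$, $s_2=t$, $t_2=s$, so that the theorem's space $\maclS _{t_1,s_2}^{s_1,t_2}(\rr{2d})$ becomes exactly $\maclS _{t,t}^{s,s}(\rr{2d})=\maclS _t^s(\rr{2d})$. With this choice the hypotheses of the theorem read $s_1+t_1=s+t\ge 1$, $s_2+t_2=t+s\ge 1$, while $s_1\le s_2\iff s\le t$ and $t_2\le t_1\iff s\le t$; thus the two asymmetric constraints collapse precisely to the standing hypothesis $s\le t$. Part (1) then gives that $e^{i\scal {AD_\xi}{D_x}}$ is a homeomorphism on $\maclS _t^s(\rr{2d})$ and extends uniquely to one on $(\maclS _t^s)'(\rr{2d})$. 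For the $\Sigma$-statement I would invoke part (2): its extra requirement $(s_1,t_1)\neq(\frac 12,\frac 12)$ and $(s_2,t_2)\neq(\frac 12,\frac 12)$ becomes, under our assignment, the single condition $(s,t)\neq(\frac 12,\frac 12)$, which is in force by the reduction above; hence $e^{i\scal {AD_\xi}{D_x}}$ is a homeomorphism on $\Sigma _t^s(\rr{2d})$ and on $(\Sigma _t^s)'(\rr{2d})$.

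The only genuine point to watch is the bookkeeping of the index identification: one must verify that the theorem's cross-placed indices $\maclS _{t_1,s_2}^{s_1,t_2}$ really do yield the isotropic space under $s_1=t_2=s$ and $s_2=t_1=t$, and that the two one-sided inequalities $s_1\le s_2$, $t_2\le t_1$ merge into $s\le t$. Everything else is the routine equivalence between the isotropic and split formulations of the Gelfand-Shilov seminorms. The main obstacle, such as it is, is thus purely notational rather than analytical.
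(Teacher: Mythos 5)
Your specialization $s_1=t_2=s$, $s_2=t_1=t$ in Theorem \ref{ThmCalculiTransf} (1)--(2), together with the identification $\maclS _{t,t}^{s,s}=\maclS _t^s$ and the trivial degenerate cases, is exactly the deduction the paper intends (it leaves the corollary unproved as an immediate consequence of the theorem). The index bookkeeping and the collapse of $s_1\le s_2$, $t_2\le t_1$ to $s\le t$ are all correct, so the proposal is right and follows essentially the paper's route.
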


\par

The assertion (1) in the previous theorem is essentially
a special case of Theorem 32 in \cite{Tr}. In order to be self-contained
we present a complete but different proof.

\par

We need some preparations for the proof and start with the following
proposition.

\par

\begin{prop} \label{prepalg}
Let $s >\frac 12, \phi$ be a Gaussian on $\rr {2d}$ and let $\omega \in
\mascP _s(\rr {2d})$. Then the following conditions are equivalent:
\begin{enumerate} 
\item $ a \in S_s^{(\omega)}(\rr {2d})$;

\vrum

\item $a\in \Sigma _1'(\rr {2d})$ and
\begin{equation}\label{STFTEst1}
\left | \partial _X^\alpha \left ( e^{i\langle X,\Xi  \rangle}V_{\phi}a(X, \Xi) \right )
\right | \lesssim \omega (X) \ep^{|\alpha|}(\alpha!)^s e^{-R|\Xi |^{\frac 1s}},
\end{equation}
for every $\alpha \in \nn d$, $\ep >0$, $R>0$, and $X,\Xi \in \rr {2d}$;

\vrum

\item $a\in \Sigma _1'(\rr {2d})$ and \eqref{STFTEst1} holds for
$\alpha =0$, for every $R>0.$
\end{enumerate}
\end{prop}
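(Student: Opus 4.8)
The plan is to establish the cyclic chain of implications $(1)\Rightarrow(2)\Rightarrow(3)\Rightarrow(1)$, with the equivalence between derivative estimates and short-time Fourier transform estimates mirroring the technique already developed in the proof of Proposition \ref{prop1}. The key observation throughout is that the weighted differential condition \eqref{SsCond} on $a$, when tested against a Gaussian window, transforms into a decay condition on $V_\phi a$ in the frequency variable $\Xi$, while the factor $e^{i\langle X,\Xi\rangle}$ serves to turn differentiation in $X$ into a manageable operation on the STFT side.

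First I would prove $(1)\Rightarrow(2)$. Assuming $a\in S_s^{(\omega)}(\rr{2d})$, I would write out $V_\phi a(X,\Xi)$ as the Fourier transform of $y\mapsto a(y+X)\overline{\phi(y)}$ evaluated at $\Xi$, exactly as the function $F_x$ was handled in Proposition \ref{prop1}. Because $\phi$ is a Gaussian (hence in $\Sigma_{\frac 12}\subseteq\Sigma_s$) and $a$ obeys \eqref{SsCond} with the moderateness of $\omega\in\mascP_s$ (using Remark \ref{remP} to absorb the exponential growth of $\omega(y+X)$ into $\omega(X)$ times a decaying factor in $y$), the product $a(\cdo+X)\overline\phi$ lies in a weighted $\Sigma_s$-type space with all derivatives bounded by $\omega(X)\ep^{|\alpha|}(\alpha!)^s e^{-h|y|^{1/s}}$. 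The crucial point is then to control $\partial_X^\alpha(e^{i\langle X,\Xi\rangle}V_\phi a(X,\Xi))$: applying Leibniz, the derivatives falling on $e^{i\langle X,\Xi\rangle}$ produce powers $\Xi^\gamma$, but these are compensated by the super-exponential decay $e^{-R|\Xi|^{1/s}}$ coming from the $\Sigma_s$-regularity in the $y$-variable (so that $|\Xi^\gamma \widehat{F_X}(\Xi)|\lesssim \ep_2^{|\gamma|}(\gamma!)^s e^{-R|\Xi|^{1/s}}$), and the derivatives falling on $V_\phi a$ itself are handled by the $\alpha$-estimate on $F_X$. Combining these via the inequality $|\Xi|^{1/s}\le\kappa(s^{-1})(\cdots)$ and the standard $(\gamma!(\alpha-\gamma)!)^s\le\kappa(s)^{\cdots}(\alpha!)^s$ type bound yields \eqref{STFTEst1} for all $\alpha$ and all $\ep,R>0$.

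The implication $(2)\Rightarrow(3)$ is trivial, being merely the special case $\alpha=0$. The substantive remaining step is $(3)\Rightarrow(1)$, and I expect this to be the main obstacle. Here I would start from the STFT inversion formula (as in the proof of part (2) of Proposition \ref{prop1}) to recover $a$ and its derivatives as integrals of $V_\phi a$ against translated, modulated copies of $\phi$. Differentiating under the integral and distributing $\partial^\alpha$ between the oscillatory factor $e^{i\langle X,\eta\rangle}$ and the window $\phi$, I would use the hypothesis \eqref{STFTEst1} with $\alpha=0$, namely $|V_\phi a(Y,\eta)|\lesssim \omega(Y)e^{-R|\eta|^{1/s}}$ valid for every $R>0$, to absorb the polynomial growth $|\eta^\beta|\lesssim \ep^{|\beta|}(\beta!)^s e^{-R|\eta|^{1/s}/2}$ generated by the $\beta$-derivatives of the oscillatory factor. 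The decay of $\phi$ and its derivatives in $\Sigma_s$, together with the $v$-moderateness of $\omega$ to pass from $\omega(Y)$ back to $\omega(X)$ at the cost of an $e^{c|X-Y|^{1/s}}$ factor that the window decay controls, then gives the bound $|\partial^\alpha a(X)|\lesssim \omega(X)\ep^{|\alpha|}(\alpha!)^s$ for every $\ep>0$. The delicate bookkeeping is in verifying that all $\ep$ and $R$ constants can be chosen uniformly so that the final estimate holds \emph{for every} $\ep>0$, as required for membership in $S_s^{(\omega)}$; this uniform control, rather than any single estimate, is where the real care is needed.
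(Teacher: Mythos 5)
Your plan is correct and follows essentially the same route as the paper: the paper proves $(1)\Rightarrow(2)$ by observing that $e^{i\langle X,\Xi\rangle}V_\phi a(X,\Xi)$ is the partial Fourier transform in $Y$ of $F_a(X,Y)=a(X+Y)\phi(Y)$, bounds $\partial_X^\alpha F_a$ via Leibniz and the moderateness of $\omega$, notes that $(2)\Rightarrow(3)$ is trivial, and refers $(3)\Rightarrow(1)$ to the inversion-formula argument of Proposition \ref{prop1}, exactly as you propose. The only simplification you miss is that, since the whole product $e^{i\langle X,\Xi\rangle}V_\phi a(X,\Xi)$ equals $\mascF_2F_a(X,\Xi)$, the $X$-derivatives fall entirely on $a(X+\cdot)$ and no Leibniz expansion producing powers $\Xi^\gamma$ is required; your way of absorbing those powers into $e^{-R|\Xi|^{1/s}}$ still closes, just less cleanly.
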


\par

\begin{proof}
Obviously, (2) implies (3). Assume now that (1) holds. Let $X=(x,\xi),
Y=(y, \eta)$ and
$$
F_a(X,Y)=a(X+Y)\phi(Y).
$$ 
By straightforward application of Leibniz rule in combination with
\eqref{estomega} we obtain 
$$
|\partial_X^\alpha F_a(X,Y)| \lesssim \omega(X)e^{-R|Y|^{\frac 1s}}
\ep^{|\alpha |}(\alpha !)^s
$$
for every $\ep >0$ and $ R>0$.
Hence, if 
$$
G_{a, \ep, X}(Y)= \frac{(\partial _X^\alpha F_a)(X,Y)}{\omega(X)
\ep^{|\alpha |}(\alpha !)^s},
$$
then $\{ G_{a, \ep, X} \}_{X \in \mathbf{R}^{2d}}$ is a bounded set in
$\Sigma_s(\mathbf{R}^{d})$ for every fixed $\ep >0$. If $\mascF _2 F_a$
is the partial Fourier transform of $F_a(X,Y)$ with respect to the $Y$-variable, we get
$$
|\partial_X^\alpha (\mascF _2 F_a)(X, \zeta, z)| \lesssim \omega(X)e^{-R(|z|^{\frac 1s}
+|\zeta|^{\frac 1s})}\ep^{|\alpha|}(\alpha!)^s,
$$
for every $R>0,\ep>0$. This is the same as (2).

\par

It remains to prove that (3) implies (1), but this follows by similar arguments as in the
proof of Proposition \ref{prop1}.
The details are left for the reader.
\end{proof}

\par

\begin{prop}\label{SymbClassModSpace}
Let $q\in [1,\infty ]$, $s>\frac 12$, $\phi \in \Sigma _s(\rr {2d})\setminus 0$,
$\omega \in \mascP _s(\mathbf{R}^{2d}),$ and let
$$
\omega _R(x,\xi, \eta, y) = \omega(x,\xi) e^{-R(|y|^{\frac 1s} + |\eta|^{\frac 1s})}.
$$
Then 
\begin{multline}\label{iden}
S^{(\omega)}_s(\mathbf{R}^{2d})= \bigcap_{R>0} M^\infty_{(1/\omega _R)}
(\mathbf{R}^{2d}) = \bigcap_{R>0} M^{\infty,1}_{(1/\omega_R)}(\mathbf{R}^{2d})
\\[1ex]
=
\bigcap _{R>0}\sets {a\in \maclS _{\frac 12}'(\rr d)}{\nm {\omega
_R^{-1}V_\phi a}{L^{\infty ,q}} <\infty }.
\end{multline}
\end{prop}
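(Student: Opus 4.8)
The plan is to route everything through the short-time Fourier transform characterization of $S^{(\omega)}_s$ provided by Proposition \ref{prepalg}, and then to show that the three intersections on the right-hand side of \eqref{iden} are all insensitive to the exponent $q$, thanks to the extra decay carried by the family $\{\omega _R\}_{R>0}$. Before the main argument I would record the structural facts that make these modulation spaces well behaved. Since $\omega \in \mascP _s(\rr{2d})$ and $|\eta +\eta _0|^{\frac 1s}\le \kappa (s^{-1})(|\eta |^{\frac 1s}+|\eta _0|^{\frac 1s})$ (and similarly for the $y$-variable), each $\omega _R$ is $v$-moderate for a submultiplicative weight of the form $v=e^{c(|x|^{\frac 1s}+|\xi |^{\frac 1s}+|\eta |^{\frac 1s}+|y|^{\frac 1s})}$; hence every $M^{\infty ,q}_{(1/\omega _R)}(\rr{2d})$ is a Banach space, independent of the window $\phi \in M^1_{(v)}\setminus 0$, so that the Gaussian in \eqref{Modnorm} may be replaced by the given $\phi \in \Sigma _s\setminus 0$. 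Moreover, by Remark \ref{remP} one has $\omega (X)^{-1}\gtrsim e^{-c|X|}$, whence $1/\omega _R(X,\Xi )\gtrsim e^{-c|X|}\gtrsim e^{-\ep (|X|^2+|\Xi |^2)}$ for every $\ep >0$; consequently $\Sigma _1\subseteq M^{\infty ,q}_{(1/\omega _R)}\subseteq \Sigma _1'$, so any $a$ in one of these spaces automatically lies in $\Sigma _1'$ and is admissible in Proposition \ref{prepalg}.

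Next I would identify $\bigcap _{R>0}M^\infty _{(1/\omega _R)}$ with $S^{(\omega)}_s$. Writing $X=(x,\xi )$ and $\Xi =(\eta ,y)$, membership of $a$ in $M^\infty _{(1/\omega _R)}$ for every $R$ means exactly that
$$
|V_\phi a(X,\Xi )|\lesssim _R \omega (X)\, e^{-R(|\eta |^{\frac 1s}+|y|^{\frac 1s})},\qquad R>0.
$$
Since $|\Xi |^{\frac 1s}$ and $|\eta |^{\frac 1s}+|y|^{\frac 1s}$ are comparable up to the factor $\kappa (s^{-1})$, an adjustment of $R$ shows this is equivalent to $|V_\phi a(X,\Xi )|\lesssim \omega (X)e^{-R|\Xi |^{\frac 1s}}$ for every $R>0$, which is precisely condition (3) of Proposition \ref{prepalg}. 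By that proposition this is equivalent to $a\in S^{(\omega)}_s$, giving the first equality in \eqref{iden}.

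It remains to show that the intersection does not depend on $q$, which yields the remaining equalities (the last set being $\bigcap _{R>0}M^{\infty ,q}_{(1/\omega _R)}$). The standard embedding $M^{\infty ,1}_{(1/\omega _R)}\hookrightarrow M^{\infty ,q}_{(1/\omega _R)}\hookrightarrow M^\infty _{(1/\omega _R)}$ already gives the inclusions $\bigcap _RM^{\infty ,1}_{(1/\omega _R)}\subseteq \bigcap _RM^{\infty ,q}_{(1/\omega _R)}\subseteq \bigcap _RM^\infty _{(1/\omega _R)}$. For the reverse I would exploit the decay trade-off: set $h_R(\Xi )=\sup _X|V_\phi a(X,\Xi )|/\omega _R(X,\Xi )$ and observe that, because $\omega _R/\omega _{R'}=e^{-(R-R')(|\eta |^{\frac 1s}+|y|^{\frac 1s})}$ is independent of $X$, one has $h_{R'}(\Xi )=e^{-(R-R')(|\eta |^{\frac 1s}+|y|^{\frac 1s})}h_R(\Xi )$ for $R>R'$. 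If $a\in \bigcap _RM^\infty _{(1/\omega _R)}$, then each $h_R$ is bounded, and since $\int _{\rr{2d}}e^{-(R-R')(|\eta |^{\frac 1s}+|y|^{\frac 1s})}\, d\eta \, dy<\infty$ it follows that $h_{R'}\in L^1$, that is $a\in M^{\infty ,1}_{(1/\omega _{R'})}$ for every $R'$. Hence $\bigcap _RM^\infty _{(1/\omega _R)}\subseteq \bigcap _RM^{\infty ,1}_{(1/\omega _R)}$, and all three intersections coincide.

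The main obstacle is exactly this last step: one must use that intersecting over all $R>0$ converts the super-exponential decay $e^{-R|\Xi |^{\frac 1s}}$ into genuine integrability in the $\Xi$-variable, so that the a priori weaker $L^\infty$-in-$\Xi$ control upgrades to $L^1$-in-$\Xi$ (and thereby to any $L^q$) control. By contrast, the passage between $|\Xi |^{\frac 1s}$ and $|\eta |^{\frac 1s}+|y|^{\frac 1s}$ through $\kappa$, the verification that $\omega _R$ is moderate and $1/\omega _R$ is bounded below, and the reduction to Proposition \ref{prepalg} are all routine, so the heart of the proof lies in the quantitative decay estimate.
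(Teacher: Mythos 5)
Your proof is correct, but for the part beyond the first equality it takes a genuinely different route from the paper. Both arguments dispose of the first identity $S^{(\omega)}_s=\bigcap_R M^\infty_{(1/\omega_R)}$ the same way, via Proposition \ref{prepalg}. For the remaining equalities the paper works with two windows $\phi ,\phi _0$ and the pointwise convolution bound $|V_\phi a|\lesssim |V_{\phi _0}a|\ast |V_\phi \phi _0|$, where $|V_\phi \phi _0|$ decays like $e^{-R(|X|^{1/s}+|Y|^{1/s})}$ for every $R$; Young--H{\"o}lder estimates in the mixed norms then give the two-sided comparison $\nm {\omega _R^{-1}V_\phi a}{L^{\infty ,q}}\lesssim \nm {\omega _{2cR}^{-1}V_{\phi _0}a}{L^\infty}$ and $\nm {\omega _R^{-1}V_\phi a}{L^\infty}\lesssim \nm {\omega _{cR}^{-1}V_{\phi _0}a}{L^{\infty ,q}}$, which settles $q$-independence and window-independence in one stroke. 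You instead quote window-independence for windows in $M^1_{(v)}\setminus 0$ as a known fact (which the paper's preliminaries do record, and which applies since $\Sigma _s\subseteq M^1_{(v)}$ for the relevant submultiplicative $v$), and then observe the elementary but decisive identity $h_{R'}(\Xi )=e^{-(R-R')(|\eta |^{1/s}+|y|^{1/s})}h_R(\Xi )$ for $h_R(\Xi )=\sup _X|V_\phi a(X,\Xi )|/\omega _R(X,\Xi )$, valid because $\omega _R/\omega _{R'}$ is independent of $X$; shrinking $R$ therefore converts boundedness of $h_R$ into integrability (indeed $L^q$ for any $q$) of $h_{R'}$, which is exactly the upgrade from $M^\infty _{(1/\omega _R)}$ to $M^{\infty ,1}_{(1/\omega _{R'})}$ inside the intersection over $R$. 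Your argument is shorter and isolates the real mechanism (the intersection over $R$ trades decay for integrability) at the cost of invoking the window-change theorem as a black box, whereas the paper's convolution computation is self-contained on that point; both are sound.
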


\par

\begin{proof}
The first equality in \eqref{iden} follows immediately from Proposition \ref{prepalg}.
We need to prove that $S^{(\omega)}_s(\mathbf{R}^{2d})$ is equal to
the last intersection in \eqref{iden}. Let $\phi _0\in \Sigma _s(\rr {2d})\setminus 0$, 
$a \in \Sigma'_{s}(\mathbf{R}^{2d})$, and set
\begin{gather*}
F_{0,a}(X,Y)=|(V_{\phi _0}a)(x,\xi, \eta, y)|,
\quad
F_{a}(X,Y)=|(V_{\phi}a)(x,\xi, \eta, y)|
\\[1ex]
\text{and}\quad 
G(X,Y)=|(V_\phi {\phi _0})(x,\xi, \eta, y)|,
\end{gather*}
where $X=(x,\xi)$ and 
$Y=(y,\eta)$. Since $V_\phi {\phi _0}\in \Sigma _s(\rr {4d})$, we have
\begin{equation}\label{GEst1}
0\le G (X,Y) \lesssim e^{-R(|X|^{\frac 1s}+|Y|^{\frac 1s})}
\quad \text{for every}\quad
R>0 .
\end{equation}

\par

By \cite[Lemma 11.3.3]{Gro}, we have
$F_a \lesssim F_{0,a} \ast G.$
Since $s >\frac 12$, we obtain 
\begin{multline}\label{pippo}
(\omega_R^{-1} \cdot F_a)(X,Y)
\\
\lesssim \omega(X)^{-1}e^{R|Y|^{\frac 1s}}
\iint F_{0,a}(X-X_1, Y-Y_1)G(X_1, Y_1)\, dX_1 dY_1 
\\
\lesssim \iint (\omega^{-1}_{cR} \cdot F_{0,a})(X-X_1, Y-Y_1)G_1(X_1,Y_1)\,  dX_1 dY_1
\end{multline}
for some $G_1$ satisfying \eqref{GEst1} and
some $c>0$.
By applying the $L^\infty$-norm on the last inequality we get
\begin{multline*}
\nm {\omega _R^{-1}F_a}{L^{\infty}} \lesssim \sup_{Y}\iint 
\big ( \sup (\omega^{-1}_{cR} \cdot F_{0,a})(\cdot , Y-Y_1)\big )
G_1(X_1, Y_1)\,  dX_1 dY_1
\\
\le
\sup _Y \big ( \nm {(\omega^{-1}_{cR} \cdot F_{0,a})(\cdot -(0,Y)}{L^{\infty ,q}}
\nm {G_1}{L^{1,q'}}
\asymp \nm {\omega^{-1}_{cR} \cdot F_{0,a}}{L^{\infty ,q}}
\end{multline*}

\par

We only consider the case $q<\infty$ when proving the opposite inequality.
The case $q=\infty$ follows by similar arguments and is left for the reader.

\par

By \eqref{pippo} we have 
\begin{multline*}
\| \omega^{-1}_{R} \cdot F_{a} \| _{L^{\infty,q}}^q
\\[1ex]
\lesssim  \int \sup _X
\left( \iint (\omega^{-1}_{cR} \cdot F_{0,a})(X-X_1, Y-Y_1)G(X_1, Y_1)
\, dX_1 dY_1 \right)^q\, dY
\\[1ex]
\lesssim  \int
\left( \iint \sup (\omega^{-1}_{2cR} \cdot F_{0,a})(\cdo , Y-Y_1)
e^{-cR|Y-Y_1|^{\frac 1s}}
G(X_1, Y_1)\, dX_1 dY_1 \right)^q\, dY
\\[1ex]
\lesssim \| \omega^{-1}_{2cR} \cdot F_{0,a} \|_{L^\infty}^q
\int \left( \iint e^{-cR|Y-Y_1|^{\frac 1s}}
G(X_1, Y_1)\, dX_1 dY_1 \right)^q\, dY
\\[1ex]
\asymp \| \omega^{-1}_{2cR} \cdot F_{0,a} \|_{L^\infty}^q.
\end{multline*}
By interchanging the roles of $\phi$ and $\phi _0$ we get
$$
\| \omega^{-1}_{R} \cdot F_{0,a} \| _{L^{\infty,q}} \lesssim 
\| \omega^{-1}_{2cR} \cdot F_{a} \|_{L^\infty},
$$
and the result follows.
\end{proof}

\par

\begin{rem}\label{remSymbClassModSpace}
Let
$$
\omega _{h,\ep ,s}(x,\xi ) \equiv e^{h|x|^{\frac 1s}-\ep |\xi |^{\frac 1s}},
$$
when $h,s, \ep >0$. If $s>\frac 12$, then
\begin{align}
\Gamma ^\infty _{0,s}(\rr d)
&=
\bigcup _{h>0} \left (\bigcap _{\ep >0} M^\infty _{(1/\omega
_{h,\ep ,s})}(\rr d)\right )
=
\bigcup _{h>0} \left (\bigcap _{\ep >0} M^{\infty ,1} _{(1/\omega
_{h,\ep ,s})}(\rr d)\right ).
\label{Gamma0sMod}
\intertext{If instead $s\ge \frac 12$, then}
\Gamma ^\infty _{s}(\rr d)
&=
\bigcup _{h>0} \left (\bigcap _{\ep >0} M^\infty _{(1/\omega _{\ep ,h,s})}(\rr d)\right )
=
\bigcup _{h>0} \left (\bigcap _{\ep >0} M^{\infty ,1} _{(1/\omega _{\ep ,h,s})}(\rr d)\right ).
\label{GammasMod}
\end{align}
In fact, the first equalities in \eqref{Gamma0sMod} and in \eqref{GammasMod}
follow from Propositions \ref{prop2} and \ref{prop2'}. The last equalities follow by
similar arguments as in the previous proof.
\end{rem}

\par

\begin{proof}[Proof of Theorem \ref{ThmCalculiTransf}]
(1) Let $a\in \mascS (\rr {2d})$.
By straight-forward computations we get
$$
\big (\mascF _2^{-1}(e^{-i \scal {AD_\xi}{D_x}} a)\big )(x-Ay,y) = 
(\mascF _2^{-1}a)(x,y).
$$
This implies that $e^{-i\scal {AD_\xi}{D_x}} = \mascF _2 \circ
U_A \circ \mascF _2^{-1}$,
where $U_A$ is the map given by $(U_A F)(x,y)=F(x+Ay,y)$.

\par

We need to show that $U_A$ is continuous on $\maclS _{t_1,t_2}
^{s_1,s_2}(\rr {2d})$. Let
$G=U_AF$, where $F\in \maclS _{t_1,t_2}^{s_1,s_2}(\rr {2d})$. Then
$$
G(x,y)= F(x+Ay,y)
\quad \text{and}\quad
\widehat G(\xi ,\eta )= \widehat F(\xi ,\eta -A^*\eta ),
$$
where $A^*$ denotes the transpose of $A$.
By Proposition \ref{propBroadGSSpaceChar}  and the assumptions on
$s_j$ and $t_j$, there are constants $c_1,c_2,r>0$, depending on $A$,
$s$ and $t$ only such that
\begin{multline*}
|G(x,y)| = |F(x+Ay,y)| \lesssim e^{-r(|x+Ay|^{\frac 1{t_1}}+|y|^{\frac 1{t_2} } ) }
\lesssim 
e^{-c_1r|x|^{\frac 1{t_1}}-c_2r|y|^{\frac 1{t_2}}},
\end{multline*}
and
\begin{equation*}
|\widehat G(\xi ,\eta )| = |\widehat F(\xi ,\eta -A^*\xi )|
\lesssim e^{-r(|\xi |^{\frac 1{s_1}}+|\eta -A^*\xi | ^{\frac 1{s_2}})}
\lesssim 
e^{-c_1r|\xi |^{\frac 1{s_1}}-c_2r|\eta |^{\frac 1{s_2}}}.
\end{equation*}
The result now follows from these estimates and the fact that the topology in
$\maclS _{t_1,t_2}^{s_1,s_2}(\rr {2d})$ can be defined through such estimates.

\par

The assertion (2) follows by similar arguments as for the proof of (1) and
is left for the reader.

\par

Next we prove (4). First we consider the case $s>\frac 12$.
Let $\phi \in \Sigma _s(\rr {2d})$ and $\phi _A = e^{i\scal {AD_\xi }{D_x}}\phi$. Then
$\phi _A\in \Sigma _s(\rr {2d})$, in view of (2).

\par

Also let
$$
\omega _{A,R}(x,\xi ,\eta ,y) = \omega (x-Ay,\xi -A^*\eta )e^{-R(|y|^{\frac 1s}+|\eta |^{\frac 1s})}.
$$
By straight-forward applications of Parseval's formula, we get
$$
|(V_{\phi _A} (e^{i\scal {AD_\xi}{D_x}}a))(x,\xi ,\eta ,y)|
=
|(V_\phi a)(x-Ay,\xi -A^*\eta ,\eta ,y)|
$$
(cf. Proposition 1.7 in \cite{To7} and its proof). This gives
$$
\nm {\omega _{0,R}^{-1}V_{\phi} a}{L^{p,q}} =
\nm {\omega _{A,R}^{-1}V_{\phi _A} (e^{i\scal {AD_\xi }{D_x}}a)}{L^{p,q}}.
$$
Hence Proposition \ref{SymbClassModSpace} gives
\begin{alignat*}{1}
a\in S^{(\omega )} _s(\rr {2d})
\quad &\Leftrightarrow \quad
\nm {\omega _{0,R}^{-1}V_\phi a}{L^\infty}<\infty \ \forall \ R>0
\\[1ex]
&\Leftrightarrow \quad
\nm {\omega _{t,R}^{-1}V_{\phi _A} (e^{i\scal {AD_\xi}{D_x}}a)}{L^\infty} <\infty \ \forall \ R>0
\\[1ex]
&\Leftrightarrow \quad
\nm {\omega _{0,R}^{-1}V_{\phi _A} (e^{i\scal {AD_\xi}{D_x}}a)}{L^\infty} <\infty \ \forall \ R>0
\\[1ex]
&\Leftrightarrow \quad
e^{i\scal {AD_\xi}{D_x}}a\in S^{(\omega )} _s(\rr {2d}), 
\end{alignat*}
and the result follows in this case. Here the third equivalence follows from the fact that
$$
\omega _{0,R+c}\lesssim \omega _{t,R}\lesssim \omega _{0,R-c},
$$
for some $c>0$.

\par

It remains to consider the case $s=\frac 12$. Then $S^{(\omega )}_s(\rr {2d})$ is
contained in the set of
real analytic functions on $\rr {2d}$ which are extendable to entire functions. Hence, if
$a\in S^{(\omega )}_s(\rr {2d})$ and $b=e^{i\scal {AD_\xi}{D_x}}$, then
$$
b = \sum _{k=0}^\infty  \frac{i^{k}}{k!} 
(\scal {AD_\xi}{D_x}^k a).
$$
Now let $\ep >0$ and let $h_0>0$ be chosen as the maximum value of the modulus of the
matrix elements in $A$. Since the number of terms in $\scal {AD_\xi}{D_x}$
is at most $d^2$, it follows from the latter equality and
\eqref{SsCond} that
\begin{multline*}
|D_x^\delta D_\xi ^\gamma b(x,\xi )|
\le
\sum _{k=0}^\infty  \frac{(d^2h_0)^{k}}{k!}
\max _{|\alpha |=|\beta |=k} 
|(D_x ^{\beta +\delta}D_\xi ^{\alpha +\gamma} a)(x,\xi)|
\\[1ex]
\lesssim
\omega (x,\xi ) \sum _{k=0}^\infty  \frac{(d^2h_0)^k}{k!} 
\ep ^{2|\beta |+|\gamma +\delta |}
\max _{|\alpha |=|\beta |=k}
\big ( (\alpha +\gamma )! (\beta +\delta )! \big )^{\frac 12}
\\[1ex]
\le
\omega (x,\xi ) (\gamma !\delta !)^{\frac 12} (\sqrt 2 \ep )^{|\gamma +\delta |}
\sum _{k=0}^\infty  \frac{(2d^2h_0\ep ^2)^k}{k!} 
\max _{|\alpha |=|\beta |=k}
\big ( \alpha ! \beta ! \big )^{\frac 12}
\\[1ex]
\le
\omega (x,\xi ) (\gamma !\delta !)^{\frac 12} (\sqrt 2 \ep )^{|\gamma +\delta |}
\sum _{k=0}^\infty  (2d^2h_0\ep ^2)^k
\\[1ex]
\lesssim \omega (x,\xi ) (\gamma !\delta !)^{\frac 12} (\sqrt 2 \ep )^{|\gamma +\delta |},
\end{multline*}
provided $\ep$ is chosen smaller than $(2d^2h_0)^{-\frac 12}$. Here the third inequality follows from
$$
(\beta +\delta )!\le 2^{|\beta +\delta |}\beta ! \delta ! \, .
$$
Hence $b\in  S^{(\omega )}_s(\rr {2d})$. In the same way it follows that
$a\in S^{(\omega )}_s(\rr {2d})$ when $b\in  S^{(\omega )}_s(\rr {2d})$, and (4) follows.

\par

Finally, (3) follows by similar arguments as (4), using Remark
\ref{remSymbClassModSpace} instead of Proposition \ref{SymbClassModSpace}.
The details are left for the reader.
\end{proof}

\par 

We note that if $A,B$ are real $d\times d$-matrices, and $a$ belongs to
$\Gamma _{0,s}^\infty (\rr {2d})$ or $\Gamma _{s}^\infty (\rr {2d})$, then
the first part of the previous proof shows that
\begin{equation}\label{EqCalculiTransfer}
\op _A(a) = \op _B(b)\quad \Leftrightarrow \quad 
e^{-i\scal {AD_\xi}{D_x}}a = e^{-i\scal {BD_\xi}{D_x}}b .
\end{equation}

\par

We have now the following.

\par

\begin{thm}\label{ThmCalculiTransf2}
Let $s$, $s_j$, $t_j$ be as in Theorem \ref{ThmCalculiTransf},
let $A$ and $B$ be real $d\times d$ matrices, and let $a$ and $b$
be suitable distributions such that $\op _A(a)=\op _B(b)$. Then the
following is true:
\begin{enumerate}
\item $a\in \maclS _{s_1,t_2}^{t_1,s_2}(\rr {2d})$ if and only if $b\in
\maclS _{s_1,t_2}^{t_1,s_2}(\rr {2d})$, and $a\in (\maclS _{s_1,t_2}^{t_1,s_2})'(\rr {2d})$
if and only if $b\in (\maclS _{s_1,t_2}^{t_1,s_2})'(\rr {2d})$;

\vrum

\item $a\in \Sigma
_{s_1,t_2}^{t_1,s_2}(\rr {2d})$ if and only if $b\in
\Sigma _{s_1,t_2}^{t_1,s_2}(\rr {2d})$, and $a\in (\Sigma _{s_1,t_2}^{t_1,s_2})'(\rr {2d})$
if and only if $b\in (\Sigma _{s_1,t_2}^{t_1,s_2})'(\rr {2d})$;

\vrum

\item $a\in \Gamma _s^\infty (\rr {2d})$ if and only if
$b\in \Gamma _s^\infty (\rr {2d})$, and
$a\in \Gamma _{0,s}^\infty (\rr {2d})$ if and only if $b\in
\Gamma _{0,s}^\infty (\rr {2d})$;

\vrum

\item $a\in S^{(\omega )}_s (\rr {2d})$ if and only if $b\in S^{(\omega )}_s (\rr {2d})$.
\end{enumerate}
\end{thm}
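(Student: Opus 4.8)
The plan is to reduce the entire statement to the single fact that $a$ and $b$ are interchanged by one operator of the form $e^{i\scal{CD_\xi}{D_x}}$, and then to read off each of the four equivalences from the corresponding part of Theorem \ref{ThmCalculiTransf}. I would start from the transfer identity \eqref{EqCalculiTransfer}, which recasts the operator equality $\op_A(a)=\op_B(b)$ as
$$
e^{-i\scal{AD_\xi}{D_x}}a = e^{-i\scal{BD_\xi}{D_x}}b .
$$

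The operators $\scal{AD_\xi}{D_x}$ and $\scal{BD_\xi}{D_x}$ are constant-coefficient, hence Fourier multipliers in the variables dual to $(x,\xi)$, so the two exponentials commute. Applying $e^{i\scal{BD_\xi}{D_x}}$ to the identity above would then give
$$
b = e^{i\scal{(B-A)D_\xi}{D_x}}a , \qquad a = e^{i\scal{(A-B)D_\xi}{D_x}}b ,
$$
exhibiting $a$ and $b$ as images of one another under the single operator $e^{i\scal{CD_\xi}{D_x}}$ and its inverse, with $C=B-A$.

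Once this reduction is in place, each equivalence is immediate from the matching part of Theorem \ref{ThmCalculiTransf}, used with the matrix $C=B-A$ in place of $A$. Part (1) furnishes the homeomorphism of $e^{i\scal{CD_\xi}{D_x}}$ on the pertinent $\maclS$-space and on its dual, giving statement (1); part (2), under the supplementary condition $(s_j,t_j)\neq(\frac 12,\frac 12)$, does the same for the $\Sigma$-space and its dual, giving statement (2); part (3) covers $\Gamma_s^\infty$ and $\Gamma_{0,s}^\infty$, giving statement (3); and part (4) covers $S_s^{(\omega)}$, giving statement (4). In each case the asserted equivalence is nothing but the statement that $e^{i\scal{CD_\xi}{D_x}}$ carries the space in question bijectively onto itself.

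I expect the only real obstacle to be justifying that the transfer identity \eqref{EqCalculiTransfer} is genuinely available for each of the distribution classes at hand --- this is where the word \emph{suitable} in the hypothesis must be pinned down. For the $\Gamma$-classes it is exactly the identity recorded just before the theorem, and for $S_s^{(\omega)}$ it follows from that identity together with the inclusion $S_s^{(\omega)}\subset\Gamma_{0,s}^\infty$ of Remark \ref{rem:GammaToSRel}. For the $\maclS$- and $\Sigma$-classes and their duals I would instead obtain \eqref{EqCalculiTransfer} straight from the kernel formula \eqref{KaADef}: setting $K_{A,a}=K_{B,b}$ and substituting $u=x-y$ yields $\mascF_2^{-1}a = U_{A-B}(\mascF_2^{-1}b)$ with $(U_EF)(v,u)=F(v+Eu,u)$, so that the relation $\mascF_2\circ U_E\circ\mascF_2^{-1}=e^{-i\scal{ED_\xi}{D_x}}$ from the first part of the proof of Theorem \ref{ThmCalculiTransf} again exhibits $a$ and $b$ as related by a single operator $e^{i\scal{CD_\xi}{D_x}}$, $C=B-A$. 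Since every space occurring in (1)--(4) embeds continuously in $\maclS_{\frac 12}'(\rr{2d})$, this computation is legitimate for all admissible pairs $(a,b)$, and the four equivalences follow.
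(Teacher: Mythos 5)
Your argument is exactly the one the paper intends: Theorem \ref{ThmCalculiTransf2} is stated without a written proof, as an immediate consequence of the transfer identity \eqref{EqCalculiTransfer} and Theorem \ref{ThmCalculiTransf}, i.e.\ writing $b=e^{i\scal{(B-A)D_\xi}{D_x}}a$ and invoking the homeomorphism properties of that single operator on each of the four classes. Your extra care in justifying \eqref{EqCalculiTransfer} on the dual classes via the kernel formula \eqref{KaADef} goes beyond what the paper records and is correct.
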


\par

Let $s>\frac 12$ and $\omega$ be as in Definition \ref{P}.
Let $\phi \in \Sigma_s(\rr {2d})$. Since any element in $\mathscr P_s(\rr {2d})$ is
$v$-moderate for some $v$, it follows from Proposition \ref{prop1} that 
\begin{equation}
a \in S_s^{(\omega)}(\rr {2d}) \quad \Leftrightarrow \quad
|V_{\phi}a(x,\xi, \eta, y)| \lesssim
\omega(x,\xi)  e^{-\ep (|y|^{\frac 1s}+|\eta|^{\frac 1s})} 
\end{equation}
for every $\ep >0.$

\subsection{Continuity for pseudo-differential operators}

The first result follows by choosing $p=\infty$ and $q=1$ in Theorem 6.2 in
\cite{To11}.

\par

\begin{prop}\label{p5.4}
Let $s >\frac12$, $\kappa$ be as in \eqref{kappadef}. Let
$p,q\in [1,\infty ]$, $h_j,r_j\in \mathbf R$, $j=1,2$, and 
\begin{equation}\label{omegadefs}
\begin{aligned}
\omega _j(x,\xi ) &= e^{h _j(|x|^{\frac 1s} +|\xi |^{\frac 1s})},\quad j=1,2,
\\[1ex]
\omega (x,\xi ,\eta ,y) &= e^{-r_1(|x|^{\frac 1s} +|\xi |^{\frac 1s})
+ r_2(|y|^{\frac 1s} +|\eta |^{\frac 1s})}
\end{aligned}
\end{equation}
If
\begin{equation}\label{rhocond}
\kappa (s^{-1}) h_2+2^{-\frac 1s}r_1\le r_2\quad \text{and}\quad
\kappa (s^{-1}) (h_2 + r_1) \le h_1,
\end{equation}
and $a\in M^{\infty ,1}_{(\omega )}(\rr
{2d})$, then $\op ^w(a)$ from $\mathcal S_{\frac 12}(\rr d)$ to $\mathcal S_{\frac 12}'(\rr
d)$ extends uniquely to a continuous map from
$M^{p,q}_{(\omega _1)}(\rr d)$ to $M^{p,q}_{(\omega _2)}(\rr d)$.
\end{prop}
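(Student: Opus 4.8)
The plan is to deduce the statement from the general continuity theorem for Weyl operators between weighted modulation spaces, Theorem 6.2 in \cite{To11}, by specializing the indices of the \emph{symbol} space to $(p,q)=(\infty ,1)$, which is exactly the hypothesis $a\in M^{\infty ,1}_{(\omega )}$. That theorem reduces the boundedness of $\op ^w(a)\colon M^{p,q}_{(\omega _1)}(\rr d)\to M^{p,q}_{(\omega _2)}(\rr d)$ to a pointwise domination of the weights of the form
\begin{equation*}
\frac {\omega _2(X)}{\omega _1(Y)}\lesssim \omega \left ( \frac {X+Y}2,\, \mathcal J(X-Y)\right ),\qquad X,Y\in \rr {2d},
\end{equation*}
where the first $\rr {2d}$-block of $\omega$ receives the phase-space midpoint and the second receives the (symplectically rotated) difference; the rotation $\mathcal J$ is immaterial here since $\omega$ in \eqref{omegadefs} depends only on the moduli of its arguments. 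So the whole proof comes down to verifying this domination precisely under \eqref{rhocond}.

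First I would insert the explicit weights from \eqref{omegadefs}. Writing $X=(x_2,\xi _2)$ and $Y=(x_1,\xi _1)$, the inequality separates into two identical scalar inequalities, one in $(x_1,x_2)$ and one in $(\xi _1,\xi _2)$. Denoting by $a$ and $b$ the generic input and output coordinates, each becomes
\begin{equation*}
h_2|b|^{\frac 1s}+r_1\left | \frac {a+b}2\right |^{\frac 1s}\le h_1|a|^{\frac 1s}+r_2|b-a|^{\frac 1s}+C,
\end{equation*}
and since every term is positively homogeneous of degree $1/s$, it suffices to treat the case $C=0$.

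To establish the scalar inequality I would use the elementary bound $|u+v|^{\frac 1s}\le \kappa (s^{-1})(|u|^{\frac 1s}+|v|^{\frac 1s})$ stated after \eqref{kappadef}. Expanding $b=a+(b-a)$ gives $|b|^{\frac 1s}\le \kappa (s^{-1})(|a|^{\frac 1s}+|b-a|^{\frac 1s})$, while $\left |\frac {a+b}2\right |^{\frac 1s}=2^{-\frac 1s}|a+b|^{\frac 1s}$ with $a+b=2a+(b-a)$ is handled the same way. Collecting the coefficients of $|a|^{\frac 1s}$ and of $|b-a|^{\frac 1s}$ and bounding them by $h_1$ and $r_2$ respectively leads to the two conditions in \eqref{rhocond} (up to the sharp value of one constant, discussed next): the factor $\kappa (s^{-1})$ is the price of each quasi-triangle step, and the factor $2^{-\frac 1s}$ comes from halving the argument of the midpoint.

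The hard part will be the constant bookkeeping in the range $s<1$, where $\kappa (s^{-1})=2^{\frac 1s-1}>1$. The crude estimate above produces the coefficient $\kappa (s^{-1})\big (h_2+2^{-\frac 1s}r_1\big )$ in front of $|b-a|^{\frac 1s}$, whereas \eqref{rhocond} only asks for $\kappa (s^{-1})h_2+2^{-\frac 1s}r_1$; removing the spurious factor $\kappa (s^{-1})$ from the $2^{-\frac 1s}r_1$ term requires distributing the midpoint sharply rather than through the worst-case quasi-triangle bound, and this is exactly the delicate point encapsulated in the proof of Theorem 6.2. For a fully self-contained argument one would, in addition, have to reprove the estimate underlying that theorem, namely the identity expressing the Gabor matrix of $\op ^w(a)$ through $V_\phi a$ evaluated at the Weyl lift, followed by Schur- and Young-type bounds on the resulting integral operator (as in \cite{Gro}); the passage from \eqref{GelfRelCond1} to \eqref{stftcond1} in Proposition \ref{prop1} is the model for the kernel decay one needs. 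Finally, uniqueness of the continuous extension follows from the density of $\Sigma _1(\rr d)$ in $M^{p,q}_{(\omega _1)}(\rr d)$ for $p,q<\infty$, the endpoint cases being part of the conclusion of the cited theorem.
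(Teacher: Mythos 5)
Your route is the same as the paper's: both invoke Theorem 6.2 in \cite{To11} and reduce everything to the pointwise weight domination \eqref{weightcond}, and your midpoint/difference formulation is that same condition after the substitution $X=(x-y/2,\xi +\eta /2)$, $Y=(x+y/2,\xi -\eta /2)$; the reduction to a scalar inequality in one pair of variables is also identical. The genuine gap is that you stop at the only nontrivial step. Having correctly observed that the quasi-triangle inequality produces the coefficient $\kappa (s^{-1})\big (h_2+2^{-\frac 1s}r_1\big )$ in front of $|b-a|^{\frac 1s}$, you assert that the removal of the extra factor $\kappa (s^{-1})$ is ``encapsulated in the proof of Theorem 6.2''. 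It is not: that theorem takes the domination \eqref{weightcond} as a hypothesis, so verifying it under \eqref{rhocond} is the entire content of the present proof and cannot be delegated to the citation. As written, your argument establishes the proposition only under the stronger hypothesis $\kappa (s^{-1})\big (h_2+2^{-\frac 1s}r_1\big )\le r_2$.

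For what it is worth, your bookkeeping is not crude at all: the paper's own computation (split $-h_1|2x+y|^{\frac 1s}$ using the second inequality in \eqref{rhocond}, then apply the quasi-triangle inequality to $2x-y=(2x+y)-2y$ and to $2x=(2x+y)-y$) arrives at exactly the coefficient $2^{\frac 1s}\kappa (s^{-1})\big (h_2+2^{-\frac 1s}r_1\big )$ for $|y|^{\frac 1s}$, and its last step silently uses $\kappa (s^{-1})\big (h_2+2^{-\frac 1s}r_1\big )\le r_2$ rather than the first inequality of \eqref{rhocond}. Moreover, no sharper treatment of the midpoint can rescue the weaker condition: take $d=1$, $s=\frac 12$, $h_2=1$, $r_1=\ep$, $r_2=2+\ep /4$, $h_1=2+2\ep$ (equality in \eqref{rhocond}); the quadratic form
\begin{equation*}
h_1a^2+r_2(b-a)^2-h_2b^2-\tfrac {r_1}{4}(a+b)^2=(4+2\ep )a^2-(4+\ep )ab+b^2
\end{equation*}
has discriminant $\ep ^2>0$, hence is indefinite for every $\ep >0$, and by homogeneity no multiplicative constant in \eqref{weightcond} can compensate. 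So the step you deferred cannot be filled in as stated; the fix is to strengthen the first condition in \eqref{rhocond} to $\kappa (s^{-1})\big (h_2+2^{-\frac 1s}r_1\big )\le r_2$ (harmless in the application, Theorem \ref{theorem1}, where the constants are at one's disposal) and then complete your computation, which at that point coincides with the paper's.
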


\par

\begin{proof}
By Theorem 6.2 in \cite{To11}, it suffices to verify that
\begin{equation} \label{weightcond}
\frac {\omega _2(x-y/2,\xi +\eta /2)}{\omega _1(x+y/2,\xi -\eta /2)}
\lesssim \omega (x,\xi ,\eta ,y),
\end{equation}
and by \eqref{omegadefs} and \eqref{rhocond} the latter inequality follows if we prove
\begin{align*}
h_2|2x-y|^{\frac 1s} -h_1|2x+y|^{\frac 1s} &\le 2^{\frac 1s}(r_2|y|^{\frac 1s}-r_1|x|^{\frac 1s})
\intertext{and}
h_2|2\xi +\eta|^{\frac 1s} -h_1|2\xi -\eta |^{\frac 1s} &\le 2^{\frac 1s}(r_2|\eta |^{\frac 1s}
-r_1|\xi |^{\frac 1s}).
\end{align*}
We only prove the first inequality. The other inequality follows by similar arguments
and is left for the reader.

\par

By \eqref{rhocond} we get
\begin{multline*}
h_2|2x-y|^{\frac 1s} -h_1|2x+y|^{\frac 1s}
\\[1ex]
\le h_2(|2x-y|^{\frac 1s}-\kappa (s^{-1})|2x+y|^{\frac 1s}) - r_1 \kappa(s^{-1})|2x+y|^{\frac 1s}
\\[1ex]
\le
2^{\frac 1s}h_2\kappa (s^{-1})|y|^{\frac 1s} - r_1 \kappa(s^{-1})|2x+y|^{\frac 1s}
\\[1ex]
\le
2^{\frac 1s}h_2\kappa (s^{-1})|y|^{\frac 1s} - 2^{\frac 1s}r_1|x|^{\frac 1s}
+
r_1\kappa (s^{-1})|y|^{\frac 1s}
\\[1ex]
=
2^{\frac 1s}\kappa (s^{-1})(h_2+2^{-\frac 1s}r_1)|y|^{\frac 1s}-2^{\frac 1s}r_1|x|^{\frac 1s}
\le
2^{\frac 1s}(r_2|y|^{\frac 1s}-r_1|x|^{\frac 1s}) .
\end{multline*}
This gives the desired estimates.
\end{proof}

\par

\begin{rem} \label{quantindep}
Proposition \ref{p5.4} is valid when $\op^w (a)$ is replaced by $\op_A(a)$
when $A$ is a fixed real $d\times d$  matrix, and 
the condition \eqref{weightcond} is replaced by 
$$
\frac{\omega_2(x-Ay, \xi+(I-A)\eta)}{\omega_1(x+(I-A)y, \xi-A\eta)}
\lesssim \omega(x,\xi,\eta,y).
$$
Here and in what follows we let $I$ be the $d\times d$ unit matrix.
This leads to that the conclusion in Proposition \ref{p5.4} is still valid when
$\op^w (a)$ is replaced by $\op_A(a)$ provided the conditions on the
constants in \eqref{omegadefs} and \eqref{rhocond} are modified in
suitable ways.
\end{rem}

\par

In order to get continuity result for pseudo-differential operators on Gelfand-Shilov spaces,
we combine the previous result with the following special case of Theorem
3.9 in \cite{To11}.

\par

\begin{prop}\label{GSandMod}
Let $p,q\in [1,\infty ]$, $s>\frac 12$, and
set
$$
\omega _h(x,\xi )\equiv e^{h(|x|^{\frac 1s}+|\xi |^{\frac 1s})},\quad h>0.
$$
Then
\begin{alignat*}{2}
\bigcup _{h>0}M^{p,q}_{(\omega _h)}(\rr d) &= \mathcal S_s(\rr d), &\qquad
\bigcap _{h>0}M^{p,q}_{(\omega _h)}(\rr d) &= \Sigma _s(\rr d),
\\[1ex]
\bigcap _{h>0}M^{p,q}_{(1/\omega _h)}(\rr d) &= \mathcal S_s'(\rr d), &\qquad
\bigcup _{h>0}M^{p,q}_{(1/\omega _h)}(\rr d) &= \Sigma _s'(\rr d).
\end{alignat*}
\end{prop}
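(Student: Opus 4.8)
The plan is to reduce every one of the four identities to its $L^\infty$ counterpart, and then to read off the result from the short-time Fourier transform characterization of the Gelfand-Shilov spaces. Throughout I would fix a nonzero window $\phi \in \Sigma_s(\rr d)$; since $s>\frac 12$ such a window exists, and since each $\omega_h$ is moderate and $\phi \in M^1_{(v)}$ for the relevant $v$, the window invariance of modulation spaces (\cite{Gro}) lets me compute all the norms $\nm{\cdot}{M^{p,q}_{(\omega_h)}}$ with this $\phi$ in place of the Gaussian. I will use repeatedly the elementary inequality $|X+Y|^{\frac 1s} \le \kappa(s^{-1})(|X|^{\frac 1s}+|Y|^{\frac 1s})$, which shows that each $\omega_h$ is $\omega_{\kappa(s^{-1})h}$-moderate.

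First I would establish the two one-sided embeddings that make the unions and intersections independent of $p$ and $q$. The easy direction is $M^\infty_{(\omega_h)} \hookrightarrow M^{p,q}_{(\omega_{h'})}$ whenever $0<h'<h$: if $|V_\phi f| \lesssim \omega_h^{-1}$, then $|V_\phi f|\,\omega_{h'} \lesssim e^{-(h-h')(|x|^{\frac 1s}+|\xi|^{\frac 1s})}$, which lies in $L^{p,q}$ for every $p,q$. The reverse embedding $M^{p,q}_{(\omega_h)} \hookrightarrow M^\infty_{(\omega_{h'})}$, again for $h'$ slightly smaller than $h$, is the one genuinely nontrivial step, and here I would reuse verbatim the argument from the proof of Proposition \ref{SymbClassModSpace}: by Gröchenig's pointwise bound (\cite[Lemma 11.3.3]{Gro}) one has $|V_\phi f| \lesssim |V_\phi f| * |V_\phi \phi|$, the correlation $|V_\phi \phi|$ decays like $e^{-R|X|^{\frac 1s}}$ for every $R>0$ since $\phi \in \Sigma_s$, and moderateness lets me pull $\omega_{h'}$ inside the convolution at the cost of replacing $h'$ by $\kappa(s^{-1})h' \le h$; a single application of Hölder then bounds the $L^\infty$ norm by the $L^{p,q}$ norm. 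The same computation with $\omega_h$ replaced by $1/\omega_h$ handles the distributional weights. Taking unions and intersections over $h>0$ absorbs the harmless loss $h'<h$, so that
$$
\bigcup_h M^{p,q}_{(\omega_h)} = \bigcup_h M^\infty_{(\omega_h)},\qquad
\bigcap_h M^{p,q}_{(\omega_h)} = \bigcap_h M^\infty_{(\omega_h)},
$$
and likewise for $1/\omega_h$, independently of $p,q\in[1,\infty]$.

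It then remains to identify the four $L^\infty$-families. I would invoke the equivalences
$$
f \in \mathcal S_s(\rr d) \Leftrightarrow |V_\phi f| \lesssim \omega_h^{-1}\ \text{for some}\ h>0,\qquad
f \in \Sigma_s(\rr d) \Leftrightarrow |V_\phi f| \lesssim \omega_h^{-1}\ \text{for every}\ h>0,
$$
together with the dual statements $f\in\mathcal S_s'(\rr d)\Leftrightarrow|V_\phi f|\lesssim\omega_h$ for every $h>0$ and $f\in\Sigma_s'(\rr d)\Leftrightarrow|V_\phi f|\lesssim\omega_h$ for some $h>0$. The first two are precisely Proposition \ref{prop1} read with $h<0$, so that $e^{h|x|^{\frac 1s}}$ becomes a decay factor: part (1) gives the forward implication after replacing the (possibly different) decay rates in $x$ and $\xi$ by their common smaller value, and part (2) gives the converse. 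The dual equivalences follow by the same mechanism in the growth regime $h>0$, using Proposition \ref{prop1}(2) to convert an exponential bound on $V_\phi f$ into the corresponding bound on $\partial^\alpha f$, and the trivial direction for the reverse. Since $|V_\phi f|\lesssim\omega_h^{-1}$ means exactly $f\in M^\infty_{(\omega_h)}$ and $|V_\phi f|\lesssim\omega_h$ means exactly $f\in M^\infty_{(1/\omega_h)}$, these equivalences translate directly into the four displayed identities.

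The main obstacle I anticipate is bookkeeping rather than conceptual: one must check that the slight loss $h'<h$ in the convolution step, the factor $\kappa(s^{-1})$ coming from moderateness, and the asymmetry between the $x$- and $\xi$-decay produced by Proposition \ref{prop1} are all absorbed when the union (or intersection) over $h>0$ is finally taken. Each of these is a fixed constant factor on the exponent, so no difficulty arises, but it is the only place where care is needed to see that one obtains genuine equalities and not merely inclusions.
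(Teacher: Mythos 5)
The paper offers no proof of Proposition \ref{GSandMod} at all: it is simply quoted as a special case of Theorem 3.9 in \cite{To11}. You are therefore supplying an argument where the paper has none, and for the first row of identities your strategy is the standard and correct one: reduce to the $L^\infty$ case by the convolution estimate $|V_\phi f|\lesssim |V_\phi f|*|V_\phi \phi|$ together with the quasi-moderateness $\omega_{h'}(X)\le \omega_{\kappa(s^{-1})h'}(X-Y)\,\omega_{\kappa(s^{-1})h'}(Y)$, absorb the losses in the union/intersection over $h$, and then identify $\bigcup_h M^\infty_{(\omega_h)}$ and $\bigcap_h M^\infty_{(\omega_h)}$ with $\mathcal S_s$ and $\Sigma_s$ via the short-time Fourier transform characterization obtained by running the proof of Proposition \ref{prop1} with a decaying (rather than growing) factor in $x$. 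One caveat: for $\frac 12<s<1$ the weights $\omega_h$ are \emph{not} $v$-moderate in the sense required for the window-invariance theorem of \cite{Gro} (the ratio $\omega_h(X+Y)/\omega_h(X)$ is unbounded in $X$ for fixed $Y$), so the blanket appeal to window invariance should be dropped; fortunately the explicit convolution computation you write out immediately afterwards is exactly what replaces it, at the cost of the factor $\kappa(s^{-1})$ you already track.

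The genuine gap is in the second row. You propose to obtain the equivalences for $\mathcal S_s'(\rr d)$ and $\Sigma_s'(\rr d)$ ``by the same mechanism\ldots using Proposition \ref{prop1}(2) to convert an exponential bound on $V_\phi f$ into the corresponding bound on $\partial^\alpha f$.'' This cannot work: Proposition \ref{prop1}(2) takes as input a bound of the form $|V_\phi f(x,\xi)|\lesssim e^{h|x|^{1/s}-\ep|\xi|^{1/s}}$, i.e.\ \emph{decay} in $\xi$, and concludes that $f$ is a smooth function with factorial derivative bounds. A general element of $\mathcal S_s'(\rr d)$ is not smooth, and its short-time Fourier transform grows in both $x$ and $\xi$, so neither the hypothesis nor the conclusion of Proposition \ref{prop1} is available. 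The dual-space identities require a different argument: for the inclusion $\mathcal S_s'\subseteq\bigcap_h M^\infty_{(1/\omega_h)}$ one estimates $|V_\phi f(x,\xi)|\le \Vert f\Vert _{(\mathcal S^s_{s,h})'}\,\Vert \phi(\cdot-x)e^{i\scal{\cdot}{\xi}}\Vert _{\mathcal S^s_{s,h}}$ and bounds the seminorms of the time-frequency shifts of $\phi\in\Sigma_s$; for the converse one uses the inversion formula $(f,g)=\Vert\phi\Vert_{L^2}^{-2}(V_\phi f,V_\phi g)$ together with the decay of $V_\phi g$ for $g\in\mathcal S_s$ (resp.\ $\Sigma_s$) to show that the prescribed growth of $V_\phi f$ defines a continuous functional. (Alternatively one can dualize the first row via the duality $M^{p,q}_{(\omega)}{}'=M^{p',q'}_{(1/\omega)}$ for $p,q<\infty$, with some care in passing duality through the inductive and projective limits.) Without one of these arguments the two distributional identities are asserted rather than proved.
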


\par

We have now the following.

\par

\begin{thm}\label{theorem1}
Let $A$ be a fixed real $d\times d$  matrix, $s>\frac 12$, and let
$a\in \Gamma_{0,s}^\infty (\rr {2d})$. Then $\op _A(a)$ is continuous on
$\Sigma _s(\rr d)$, and on $\Sigma _s'(\rr d)$.
\end{thm}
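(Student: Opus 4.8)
The plan is to characterize the symbol class and the Gelfand-Shilov spaces as (unions/intersections of) modulation spaces, and then apply the already-established continuity result, Proposition \ref{p5.4}, together with the invariance under the choice of matrix $A$. The key point is that all the relevant function spaces appearing in the statement are expressible in modulation-space terms: by Proposition \ref{GSandMod} we have $\Sigma _s(\rr d)=\bigcap _{h>0}M^{p,q}_{(\omega _h)}(\rr d)$ and $\Sigma _s'(\rr d)=\bigcup _{h>0}M^{p,q}_{(1/\omega _h)}(\rr d)$, while by Remark \ref{remSymbClassModSpace} (formula \eqref{Gamma0sMod}) the symbol class $\Gamma ^\infty _{0,s}(\rr {2d})$ is $\bigcup _{h>0}\bigl(\bigcap _{\ep >0}M^{\infty ,1}_{(1/\omega _{h,\ep ,s})}(\rr {2d})\bigr)$.

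First I would reduce to the Weyl case. By Theorem \ref{ThmCalculiTransf}(3), $e^{i\scal {AD_\xi}{D_x}}$ is a homeomorphism on $\Gamma ^\infty _{0,s}(\rr {2d})$ for $s>\frac 12$; combined with \eqref{EqCalculiTransfer} this shows that $\op _A(a)$ and $\op ^w(b)$ coincide for a suitable $b\in \Gamma ^\infty _{0,s}(\rr {2d})$ obtained from $a$. Hence it suffices to prove the continuity statement for the Weyl quantization $\op ^w(a)$ with $a\in \Gamma ^\infty _{0,s}(\rr {2d})$, and the general matrix case follows automatically. (Alternatively, one invokes Remark \ref{quantindep} to run the same modulation-space argument directly with $\op _A$; either route works.)

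Next I would fix $a\in \Gamma ^\infty _{0,s}(\rr {2d})$. By \eqref{Gamma0sMod} there is some $h_0>0$ with $a\in M^{\infty ,1}_{(1/\omega _{h_0,\ep ,s})}(\rr {2d})$ for every $\ep >0$; in the notation of Proposition \ref{p5.4} this means $a\in M^{\infty ,1}_{(\omega )}(\rr {2d})$ where $\omega (x,\xi ,\eta ,y)=e^{-r_1(|x|^{1/s}+|\xi |^{1/s})+r_2(|y|^{1/s}+|\eta |^{1/s})}$ with $r_1=h_0$ arbitrary small in the decaying direction but, more importantly, with $r_2>0$ arbitrarily large (since $a$ decays faster than any $e^{-\ep (|y|^{1/s}+|\eta |^{1/s})}$ in the frequency variables of the symbol). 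The continuity on $\Sigma _s$ then follows by checking that, given any target weight $\omega _2=\omega _{h_2}$ with prescribed $h_2>0$, one can select the input weight $\omega _1=\omega _{h_1}$ with $h_1$ large enough so that the two inequalities in \eqref{rhocond} hold; this is possible precisely because $r_2$ may be taken as large as we wish while $r_1$ can be taken small. Proposition \ref{p5.4} then yields a continuous map $M^{p,q}_{(\omega _{h_1})}(\rr d)\to M^{p,q}_{(\omega _{h_2})}(\rr d)$.

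Finally I would pass to the projective limits. Continuity on $\Sigma _s(\rr d)=\bigcap _{h>0}M^{p,q}_{(\omega _h)}$ means that for every $h_2>0$ the composition maps $\Sigma _s$ continuously into $M^{p,q}_{(\omega _{h_2})}$, which the previous paragraph establishes by choosing the appropriate larger $h_1$; continuity on $\Sigma _s'(\rr d)=\bigcup _{h>0}M^{p,q}_{(1/\omega _h)}$ follows dually (or by the analogous weight bookkeeping with the roles of $h_1,h_2$ interchanged and $1/\omega _h$ in place of $\omega _h$). The main obstacle, and the only genuinely technical part, is the weight bookkeeping: one must verify that the quantitative balance \eqref{rhocond} between $h_1,h_2,r_1,r_2$ can indeed be met simultaneously for all $h_2>0$, exploiting that $a\in \Gamma ^\infty _{0,s}$ gives decay in $(\eta ,y)$ at \emph{every} rate $r_2$ while only costing a \emph{fixed} growth rate $h_0$ in $(x,\xi )$. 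Once this is checked the theorem follows immediately.
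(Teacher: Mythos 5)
Your proposal is correct and follows essentially the same route as the paper: reduce to the Weyl case via Theorem \ref{ThmCalculiTransf}, translate the hypothesis $a\in \Gamma ^\infty _{0,s}$ into membership in $M^{\infty ,1}_{(\omega )}$ with fixed growth rate $r_1$ and arbitrarily large decay rate $r_2$, verify \eqref{rhocond} for every target $h_2$, and conclude with Propositions \ref{p5.4} and \ref{GSandMod}. (Only minor quibble: $r_1$ is \emph{fixed} by the symbol rather than ``arbitrarily small'', as your final paragraph correctly acknowledges.)
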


\par

\begin{proof}
By Theorem \ref{ThmCalculiTransf} it suffices to consider the Weyl case
$A=\frac 12\cdot I$.
Let
$$
\omega _\ep (x,\xi ,\eta ,y)= e^{-\kappa
(s^{-1})h(|x|^{\frac 1s}+|\xi |^{\frac 1s})+\ep (|y|^{\frac 1s}+|\eta |^{\frac 1s})}.
$$
Then it follows by Proposition \ref{prop1} that $a\in M^{\infty ,1}_{(\omega _\ep)}$
for every $\ep >0$.

\par

Taking $\ep$ sufficiently large we can choose positive
$$
h_2=h_{2,\ep} =\kappa (s^{-1})^{-1}\ep -2^{-\frac 1s}\kappa (s^{-1})h
\quad \text{and}\quad
h_1=h_{1,\ep} = \kappa (s^{-1})(h+h_2).
$$
Then taking $\omega _j=\omega _{j,\ep}$ as in \eqref{omegadefs}, the hypothesis
in Proposition \ref{p5.4} is fulfilled. Hence
$\op ^w(a)$ is continuous from $M^{p,q}_{(\omega _{1,\ep})}$ to 
$M^{p,q}_{(\omega _{2,\ep })}$ for every sufficiently large $\ep$.

\par

The assertion now follows from Proposition \ref{GSandMod}.
\end{proof}

\par

The next result follows in the case $s>\frac 12$ by similar arguments, after
Proposition \ref{prop1} has been replaced by Proposition \ref{prop2}.
In  order to recapture also the case $s=\frac 12$ we adopt a different argument 
used in \cite{Ca1} for similar statements.

\par

\begin{thm}\label{theorem2}
Let $A$ be a fixed real $d\times d$  matrix, $s\geq \frac 12$, and let
$a\in \Gamma_s^\infty (\rr {2d})$. Then $\op_A (a)$ is continuous on $\cS _s(\rr d)$
and on $\cS '_s(\rr d)$.
\end{thm}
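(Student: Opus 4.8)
The plan is to reduce, as in the proof of Theorem~\ref{theorem1}, to the Weyl case $A=\frac 12\cdot I$ by means of Theorem~\ref{ThmCalculiTransf}, which guarantees that $\op _A(a)$ and $\op ^w(b)$ coincide for some $b\in \Gamma _s^\infty (\rr {2d})$. For $s>\frac 12$ one can then argue exactly as in Theorem~\ref{theorem1}, the only change being that Proposition~\ref{prop1} is replaced by Proposition~\ref{prop2'}: since $a\in \Gamma _s^\infty$ means that for \emph{some} $h>0$ the estimate \eqref{symbols} holds for every $\ep >0$, the characterization of Proposition~\ref{prop2'} yields
$$
|V_\phi a(x,\xi ,\eta ,y)|\lesssim e^{\ep (|x|^{\frac 1s}+|\xi |^{\frac 1s})-h(|y|^{\frac 1s}+|\eta |^{\frac 1s})},
$$
for every $\ep >0$ and some $h>0$, so $a\in M^{\infty ,1}_{(\omega )}$ for a weight $\omega$ of the form appearing in \eqref{omegadefs}. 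Feeding this into Proposition~\ref{p5.4} and then invoking the identifications $\bigcup _h M^{p,q}_{(\omega _h)}=\maclS _s$ and $\bigcap _h M^{p,q}_{(1/\omega _h)}=\maclS _s'$ of Proposition~\ref{GSandMod} gives continuity on $\maclS _s(\rr d)$ and $\maclS _s'(\rr d)$.

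The delicate point is the endpoint $s=\frac 12$, which falls outside the range $s>\frac 12$ required by Propositions~\ref{prop2'}, \ref{p5.4} and \ref{GSandMod}, so the modulation-space machinery is unavailable. Here I would adopt the direct argument of \cite{Ca1}. The key fact is that elements of $\maclS _{1/2}(\rr d)$ extend to entire functions with Gaussian-type decay, and symbols in $\Gamma _{1/2}^\infty (\rr {2d})$ are real-analytic with controlled growth; thus the plan is to estimate $\op _A(a)f$ directly from the defining integral \eqref{e0.5}$'$, exploiting analyticity to deform contours or, equivalently, to bound the oscillatory integral by repeated integration by parts together with the exponential bounds $|\partial ^\alpha a(x,\xi )|\lesssim \ep ^{|\alpha |}\alpha !^{1/2}e^{h(|x|+|\xi |)}$ and $|\partial ^\alpha f(y)|\lesssim \ep ^{|\alpha |}\alpha !^{1/2}e^{-h_0|y|^2}$.

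The main obstacle will be controlling the super-exponential ($e^{h|\xi |^{2}}$-type, since $1/s=2$) growth of the symbol in $\xi$ against the decay supplied by $f$ and by the oscillatory factor $e^{i\scal {x-y}\xi }$. Unlike the case $s>\frac 12$, one cannot simply absorb this growth into a modulation-space weight; instead one must show that the factorial gain $\alpha !^{1/2}$ from differentiating $a$, combined with the rapid decay of $\widehat f$ guaranteed by $f\in \maclS _{1/2}$, produces convergent series and integrals yielding again a function in $\maclS _{1/2}(\rr d)$. Concretely, I expect to write $\op _A(a)f$ as a superposition of translated and modulated Gaussians, differentiate under the integral sign, and verify the Gelfand--Shilov seminorm bounds \eqref{gfseminorm} with $s=t=\frac 12$ on the output; the continuity on the dual $\maclS _{1/2}'(\rr d)$ then follows by duality, using that the formal transpose of $\op _A(a)$ is again a pseudo-differential operator with symbol in the same class. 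Verifying the uniform convergence and the precise tracking of the constants $\ep ,h$ through these estimates is the technically demanding part, and is where I would expect the argument of \cite{Ca1} to be essential.
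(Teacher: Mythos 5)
Your treatment of the range $s>\frac 12$ is correct, and it is essentially the route the paper itself indicates in the remark preceding the theorem but does not carry out: reduce to a convenient quantization by Theorem \ref{ThmCalculiTransf}, use the characterization of $\Gamma _s^\infty$ via the short-time Fourier transform (Proposition \ref{prop2'}) to place $a$ in $M^{\infty ,1}_{(\omega )}$ for weights as in \eqref{omegadefs} with $r_1>0$ arbitrarily small and $r_2$ below the fixed $h$, feed this into Proposition \ref{p5.4}, and conclude with Proposition \ref{GSandMod}. The paper instead proves the whole range $s\ge \frac 12$ at once by a direct argument, so for $s>\frac 12$ your route is a legitimate alternative.

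The gap is at the endpoint $s=\frac 12$, which is the entire point of stating the theorem for $s\ge \frac 12$ and the reason the paper abandons the modulation-space machinery. Your outline correctly observes that the growth of $a$ in $\xi$ (of order $e^{h|\xi |^2}$ for every $h>0$) is absorbed by the Gaussian decay of $\widehat f$, but it never addresses the complementary and harder issue: $a$ also grows like $e^{h|x|^2}$ in $x$, while the output must satisfy $\sup _x |x^\alpha D^\beta (\op _0(a)f)(x)|\lesssim h^{|\alpha +\beta |}(\alpha !\beta !)^{1/2}$, i.e.\ must itself decay like a Gaussian in $x$. That decay can only be extracted from the oscillation $e^{i\scal x\xi }$, and the concrete device the paper uses (following \cite{Ca1} and \cite{IV}) is: reduce to $A=0$ rather than to the Weyl case, so that the operator is the single absolutely convergent integral $\int a(x,\xi )\widehat f(\xi )e^{i\scal x\xi }\, d\xi$ (the double integral \eqref{e0.5}$'$ you propose to estimate directly does not converge absolutely for such symbols); introduce $m_{s,\tau }(x)=\sum _j \tau ^j\langle x\rangle ^{2j}/(j!)^{2s}\asymp e^{c\tau ^{1/(2s)}|x|^{1/s}}$; and exploit the identity $m_{s,\tau }(x)^{-1}\sum _j \frac {\tau ^j}{(j!)^{2s}}(1-\Delta _\xi )^je^{i\scal x\xi }=e^{i\scal x\xi }$ to integrate by parts in $\xi$, so that the prefactor $m_{s,\tau }(x)^{-1}$ dominates both $x^\alpha$ and the symbol's $e^{h|x|^{1/s}}$ growth once $h$ is small relative to $\tau$, while the $(1-\Delta _\xi )^j$ falling on $a$ and $\widehat f$ produce exactly the $(j!)^{2s}$ needed to sum the series. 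Your proposed alternatives (contour deformation, a superposition of translated and modulated Gaussians) are not developed to the point where one can see they deliver this; as written, the $s=\frac 12$ case is a plan rather than a proof, and the missing idea is precisely the mechanism converting oscillation in $\xi$ into decay in $x$ at the correct Gevrey rate.
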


\par

\begin{proof}
By Theorem \ref{ThmCalculiTransf} it suffices to consider the case $A=0$,
that is for the operator 
$$
\op _0(a)f(x)= (2\pi )^{-\frac d2} \int_{\rr d} 
a(x,\xi) \widehat{f}(\xi)e^{i\langle x,\xi \rangle} \, d\xi.
$$
The proof can be easily extended to $\textrm{Op}_A(a)$ for general $A$.

\par

First let
$$
m_s(r)=\sum_{j=0}^{\infty}\frac{r^j}{(j!)^{2s}}, \quad r \geq 0,
$$
and set
$$
m_{s, \tau}(x)= m_s(\tau \px^2), \quad \tau>0,\ x \in \rr d.
$$
It follows from \cite{IV} that $m_{s, \tau}(x)$ satisfies
\begin{equation} \label{paola}
C^{-1}e^{(2s-\ep) \eta^{\frac{1}{2s}}\px^{\frac{1}{s}}} \leq m_{s, \tau}(x) \leq
C e^{(2s+\ep)  \eta^{\frac{1}{2s}}\px^{\frac{1}{s}}} ,
\end{equation}
for every $\ep>0$. Moreover we have
$$
\frac{1}{m_{s, \tau}(x)} \sum_{j=0}^{\infty} \frac{\tau ^j}{(j!)^{2s}}(1-\Delta_{\xi})^j
e^{i\langle x,\xi \rangle} = e^{i\langle x,\xi \rangle}.
$$
For fixed $\alpha, \beta \in \nn d,$ we have
\begin{multline*}
(2\pi )^{\frac d2} x^{\alpha}D_x^{\beta} (\textrm{Op}_0(a)f)(x)
\\[1ex]
= x^{\alpha} \sum
_{\gamma \leq \beta} {\beta \choose {\gamma} } \int_{\rr d}
 \xi^{\gamma} D_x^{\beta-\gamma}a(x,\xi) \widehat{f}(\xi ) e^{i \langle x,\xi \rangle} \,  d\xi 
\\[1ex]
= \frac{x^{\alpha}}{m_{s, \tau}(x)} \sum_{\gamma \leq \beta}
\binom{\beta}{\gamma} 
h_{\tau ,\beta ,\gamma}(x),
\end{multline*}
where
$$
h_{\tau ,\beta ,\gamma}(x) = \sum_{j=0}^{\infty} \frac{\tau ^j}{(j!)^{2s}}
 \int_{\rr d}  (1-\Delta_{\xi} )^j
\left [  \xi^{\gamma} D_x^{\beta-\gamma}a(x,\xi) \widehat{f}(\xi ) \right ]  e^{i \langle x,\xi \rangle} \, d\xi.
$$
By \eqref{symbols}, there exist $\varepsilon, h_1, C >0$ such that
\begin{multline*}
|h_{\tau,\beta ,\gamma}(x)| \lesssim e^{h |x|^{\frac{1}{s}}}
\sum_{j=0}^{\infty} (C\tau )^j \ep ^{|\beta - \gamma |} (\beta - \gamma ) !^{s}
\int_{\rr d} |\xi|^{|\gamma |} e^{- (h_1-h) |\xi |^{\frac{1}{s}}}\, d\xi .
\end{multline*}
for every $h>0$.

\par

Hence, taking $h$ sufficiently small, letting $\tau < C^{-1}$, and using
\eqref{paola} and standard estimates for binomial and factorial coefficients,
it follows that there exists $h>0$ depending only on $s$ and $\tau$ only
such that
$$
\sup_{x \in \rr d} \left | x^{\alpha}D_x^{\beta} (\textrm{Op}_0(a)f)(x) \right |
\lesssim h^{|\alpha+\beta|} (\alpha ! \beta!)^{s}.
$$
This concludes the first part of the proof.
The extension to the dual space $\cS '_s(\rr d)$ is straightforward
and is left for the reader.
\end{proof}

\par

\vspace{0.3cm}

The following result follows by similar arguments as in the previous proof. The
verifications are left for the reader.

\par

\begin{thm}\label{theorem3}
Let $A$ be a $d\times d$ real matrix, $s>\frac 12$ and let $a\in \Gamma^\infty_{1,s} (\rr {2d})$.
Then $\op _A(a)$ is continuous from $\Sigma _s(\rr d)$ to $\cS _s(\rr d)$, and
from $\cS '_s(\rr d)$ to $\Sigma _s'(\rr d)$.
\end{thm}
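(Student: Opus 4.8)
The plan is to follow the strategy already established in the proofs of Theorems \ref{theorem1} and \ref{theorem2}, adapting it to the asymmetric situation where the symbol class $\Gamma^\infty_{1,s}$ requires only a finite growth rate in $\xi$ (estimate \eqref{symbols} holding for \emph{some} $\ep,h>0$), which forces the operator to map into the larger space $\cS_s$ rather than $\Sigma_s$, while the source space $\Sigma_s$ supplies the extra decay needed to absorb this. First I would invoke Theorem \ref{ThmCalculiTransf} to reduce to a convenient normalization of $A$, namely the Kohn--Nirenberg case $A=0$, so that one works with
\[
\op_0(a)f(x) = (2\pi)^{-\frac d2}\int_{\rr d} a(x,\xi)\widehat f(\xi)e^{i\scal x\xi}\,d\xi .
\]

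Next I would run the regularizing-factor argument from the proof of Theorem \ref{theorem2} verbatim, inserting the multiplier $m_{s,\tau}(x)^{-1}\sum_j \tau^j(j!)^{-2s}(1-\Delta_\xi)^j$ to generate arbitrary decay in $\xi$ at the cost of the convergent series controlled by \eqref{paola}. The key difference is bookkeeping of the exponential weights: since $a\in\Gamma^\infty_{1,s}$ satisfies \eqref{symbols} with a \emph{fixed} pair $\ep,h>0$ (not for all $h$), the factor $e^{h|x|^{1/s}}$ appears with a definite $h$ and cannot be made small. I would therefore supply the corresponding decay from the input: taking $f\in\Sigma_s(\rr d)$ means $\widehat f$ decays like $e^{-N|\xi|^{1/s}}$ for \emph{every} $N>0$, so after applying $(1-\Delta_\xi)^j$ and integrating, the $\xi$-integral $\int_{\rr d}|\xi|^{|\gamma|}e^{-(N-h)|\xi|^{1/s}}\,d\xi$ converges for every $N$, and the surviving growth in $x$ is the fixed $e^{h|x|^{1/s}}$, which is exactly the order of growth permitting membership in $\cS_s$ (estimate \eqref{GSestimate} for \emph{some} $\ep$). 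This explains the asymmetry in the statement: $\Sigma_s\to\cS_s$.

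Carrying out the estimate, I would bound $x^\alpha D_x^\beta(\op_0(a)f)(x)$ by splitting via Leibniz into terms $\binom{\beta}{\gamma}h_{\tau,\beta,\gamma}(x)$ as before, use \eqref{paola} to cancel $m_{s,\tau}(x)^{-1}$ against $x^\alpha$, and collect the $\xi$-integral together with the exponential decay of $\widehat f$. The resulting bound should take the form $\sup_x|x^\alpha D_x^\beta(\op_0(a)f)(x)|\lesssim h_0^{|\alpha+\beta|}(\alpha!\beta!)^s$ for some $h_0>0$ depending on $s,\tau$, which by \eqref{GSestimate} places $\op_0(a)f$ in $\cS_s(\rr d)$, giving continuity $\Sigma_s\to\cS_s$. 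The dual statement $\cS'_s\to\Sigma'_s$ then follows by transposition, using that the formal adjoint of $\op_A(a)$ has a symbol in the same class (again via Theorem \ref{ThmCalculiTransf}).

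The main obstacle I anticipate is tracking precisely how the two quantifier patterns interact: one must verify that the fixed growth rate $h$ coming from the symbol can genuinely be dominated by the arbitrarily strong decay $N$ of $\widehat f$ uniformly enough to yield the \emph{Gelfand--Shilov} bound $(\alpha!\beta!)^s$ with a single constant $h_0$, rather than merely a Schwartz-type bound. The delicate point is confirming that the geometric series in $\tau$ converges (requiring $\tau<C^{-1}$ as in Theorem \ref{theorem2}) \emph{simultaneously} with the $\xi$-integration remaining finite, so that no hidden dependence of $h_0$ on $\alpha,\beta$ creeps in through the index $\gamma\le\beta$ or the series index $j$. Once this uniformity is checked, the rest is the routine factorial and binomial estimation already signposted in the preceding proof.
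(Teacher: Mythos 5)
The paper offers no written proof of this theorem (it is merely stated to ``follow by similar arguments as in the previous proof''), so your plan of re-running the $m_{s,\tau}$ argument of Theorem \ref{theorem2} is certainly in the intended spirit. But there is a genuine gap in your accounting of the $x$-variable. You write that the surviving factor $e^{h|x|^{1/s}}$ ``is exactly the order of growth permitting membership in $\cS_s$.'' That is a misreading of \eqref{GSestimate}: having $\sup_x|x^\beta\partial^\alpha F(x)|\lesssim \ep^{|\alpha+\beta|}(\alpha!\beta!)^s$ with one $\ep$ uniform in $\beta$ is equivalent to exponential \emph{decay} $|\partial^\alpha F(x)|\lesssim \ep^{|\alpha|}(\alpha!)^s e^{-h_0|x|^{1/s}}$ for some $h_0>0$; a bound that still carries a factor $e^{+h|x|^{1/s}}$ places $\op_0(a)f$ in $\Gamma^\infty_{1,s}$, not in $\cS_s$. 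In the proof of Theorem \ref{theorem2} the needed decay in $x$ is produced by the prefactor $1/m_{s,\tau}(x)$, which by \eqref{paola} decays at rate roughly $2s\tau^{1/(2s)}$ in the variable $|x|^{1/s}$; that argument closes only because for $a\in\Gamma^\infty_s$ the growth exponent $h$ of the symbol may be chosen \emph{smaller} than this rate. Here $h$ is fixed, while $\tau$ is capped by the convergence requirement $\tau<C^{-1}$, with $C$ determined by the fixed constant $\ep$ of the symbol class; nothing guarantees $2s\tau^{1/(2s)}>h$. The arbitrarily fast decay of $\widehat f$ that you invoke only improves the $\xi$-integral, which is a constant in $x$, and does not reduce the $x$-growth.

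The missing idea is therefore how the decay of $\op_A(a)f$ in $x$ is to be extracted from the decay of $f$ itself. The rank-one test case $a(x,\xi)=e^{h\langle x\rangle^{1/s}}e^{h\langle \xi\rangle^{1/s}}$, for which $\op_0(a)f=e^{h\langle x\rangle^{1/s}}\bigl(e^{h\langle D\rangle^{1/s}}f\bigr)$, shows what actually has to be proved: that $e^{h\langle D\rangle^{1/s}}f$ decays faster than $e^{-h|x|^{1/s}}$ for every $f\in\Sigma_s$. This is precisely the quantitative point your write-up leaves open, and it is delicate: for $s=1$ the decay rate of $e^{h\langle D\rangle}f$ is governed by the width of the strip of analyticity of $e^{h\langle\xi\rangle}\widehat f$, which is bounded independently of how fast $\widehat f$ decays. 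Until this step is supplied --- e.g.\ by a pseudo-locality estimate on the kernel combined with the superexponential decay of $f$, or by the modulation-space route of Proposition \ref{p5.4} with the weight exponents re-examined for a fixed $r_1=h$ --- the proposal does not prove the theorem; the ``routine factorial and binomial estimation'' you defer to cannot produce decay in $x$ that the preceding bounds have already lost.
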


\par

\begin{rem}
Let $A$ be a $d\times d$ real matrix, $s>\frac 12$, $\omega \in \mascP _s(\rr {2d})$,
and let $a\in S^{(\omega )}_s(\rr {2d})$. By Remark \ref{rem:GammaToSRel} and
Theorem \ref{theorem1} it follows that $\op _A(a)$ is continuous on $\Sigma _s(\rr d)$
and on $\Sigma _s'(\rr d)$.

\par

If instead $\frac 12\le s<1$, then Remark \ref{rem:GammaToSRel} and
Theorem \ref{theorem2} show that $\op  _A(a)$ is continuous on
$\cS _s(\rr d)$ and on $\cS _s'(\rr d)$.
\end{rem}

\par

\subsection{Compositions of pseudo-differential operators}

\par

Next we deduce algebraic properties of pseudo-differential operators considered in
Theorem \ref{ThmCalculiTransf}. We recall that for pseudo-differential operators
with symbols in e.{\,}g. H{\"o}rmander classes, we have
$$
\op _0(a_1\wpr _0a_2) = \op _0(a_1)\circ \op _0(a_2),
$$
when
$$
a_1\wpr _0 a_2 (x,\xi ) \equiv \big ( e^{i\scal {D_\xi}{D_y}} (a_1(x,\xi )a_2(y,\eta ) \big )
\Big \vert _{(y,\eta )=(x,\xi )}.
$$
More generally, if $A$ is a real $d\times d$ matrix and $a_1\wpr _Aa_2$ is defined
by
\begin{equation}\label{SharpADef}
a_1\wpr _Aa_2  \equiv e^{i\scal {AD_\xi }{D_x}}\left ( \big ( e^{-i\scal {AD_\xi }{D_x}}a_1 \big )
\wpr _0 \big ( e^{-i\scal {AD_\xi }{D_x}}a_2 \big ) \right ) ,
\end{equation}
for $a_1$ and $a_2$ belonging to certain H{\"o}rmander symbol classes, then
it follows from the analysis in \cite{Ho} that
\begin{equation}\label{SharpAconjugation}
\op _A(a_1\wpr _Aa_2) = (\op _A(a_1)\circ \op _A(a_2)) .
\end{equation}

\par

By Theorem \ref{ThmCalculiTransf}, the definition \eqref{SharpADef} still makes sense for
symbol classes considered in Theorem \ref{ThmCalculiTransf}, and in the following result
we show that \eqref{SharpAconjugation} holds true for such symbol classes.

\par

\begin{thm}\label{Composition}
Let $A$ be a real $d\times d$-matrix, and let $\omega_j
\in \mathscr P_{s}(\rr {2d}), j=1,2$. Then the following is true:
\begin{enumerate}
\item If $s\ge \frac 12$,
$a_j \in S_s^{(\omega_j)}(\rr {2d}), j=1,2$, then $a_1 \wpr _A a_2
\in S_s^{(\omega_1 \omega_2)}(\rr {2d})$, and \eqref{SharpAconjugation}
hold true.

\vrum

\item If $s> \frac 12$,
$a_j \in \Gamma ^\infty _{0,s}(\rr {2d}), j=1,2$, then
$a_1 \wpr _A a_2\in \Gamma ^\infty _{0,s}(\rr {2d})$, and
\eqref{SharpAconjugation} hold true.

\vrum

\item If $s\ge \frac 12$,
$a_j \in \Gamma ^\infty _{s}(\rr {2d}), j=1,2$, then $a_1 \wpr _A a_2\in \Gamma ^\infty _{s}(\rr {2d})$, and
\eqref{SharpAconjugation} hold true.
\end{enumerate}
\end{thm}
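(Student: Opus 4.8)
The plan is to reduce the general matrix case to the Kohn--Nirenberg case $A=0$, to establish the composition identity there, and finally to prove the symbol estimates, treating the endpoint $s=\frac 12$ separately. First I would reduce to $A=0$. Put $b_j=e^{-i\scal {AD_\xi}{D_x}}a_j$, $j=1,2$. Since $S^{(\omega )}_s\subseteq \Gamma ^\infty _{0,s}$ by Remark \ref{rem:GammaToSRel}, the operator $e^{-i\scal {AD_\xi}{D_x}}$ is a homeomorphism on each of the three classes by Theorem \ref{ThmCalculiTransf}, so $b_j$ lies in the same class as $a_j$, and by \eqref{EqCalculiTransfer} we have $\op _A(a_j)=\op _0(b_j)$. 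By the definition \eqref{SharpADef}, $a_1\wpr _Aa_2=e^{i\scal {AD_\xi}{D_x}}(b_1\wpr _0b_2)$. Hence, once $b_1\wpr _0b_2$ is shown to belong to the relevant class, a second application of Theorem \ref{ThmCalculiTransf} (noting that $\omega _1\omega _2\in \mascP _s$) places $a_1\wpr _Aa_2$ in the same product-weight class, and the $A=0$ identity $\op _0(b_1\wpr _0b_2)=\op _0(b_1)\circ \op _0(b_2)$ turns into \eqref{SharpAconjugation} through \eqref{EqCalculiTransfer}. Thus it suffices to treat $A=0$.

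Next I would establish the composition identity for $A=0$. By Theorems \ref{theorem1} and \ref{theorem2} each $\op _0(b_j)$ is continuous on $\Sigma _s(\rr d)$ (resp. $\cS _s(\rr d)$), so $\op _0(b_1)\circ \op _0(b_2)$ is a well-defined continuous operator and it suffices to identify its symbol. I would compute the kernel of the composition by composing the kernels $K_{0,b_j}(x,y)=(\mascF _2^{-1}b_j)(x,x-y)$ from \eqref{KaADef}, and compare the result with the kernel attached to the oscillatory-integral symbol
\[
b_1\wpr _0b_2(x,\xi )=(2\pi )^{-d}\iint b_1(x,\xi +\eta )\, b_2(x+y,\xi )\, e^{-i\scal y\eta }\, dy\, d\eta ,
\]
which is exactly the Kohn--Nirenberg symbol attached to $\wpr _0$. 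The only delicate point here is the absolute convergence of this integral and the interchange of integrations, which is justified by the Gelfand--Shilov decay of the test functions against the exponential growth bounds on $b_1$ and $b_2$; for $s=\frac 12$ one instead reads $\wpr _0$ off the convergent power series $\sum _k\frac{i^k}{k!}\scal {D_\xi}{D_y}^k$ acting on the entire symbols.

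The analytic heart is the symbol estimate, and here I would split on $s$. For $s>\frac 12$ I would argue through the short-time Fourier transform: by Proposition \ref{SymbClassModSpace} and Remark \ref{remSymbClassModSpace}, membership in each class is equivalent to a bound on $V_\phi (\cdo )$ with a window $\phi \in \Sigma _s(\rr {2d})\setminus 0$. Expressing $V_\phi (b_1\wpr _0b_2)$ as a twisted phase-space convolution of $V_\phi b_1$ and $V_\phi b_2$, inserting the bounds $|V_\phi b_j(X,\Xi )|\lesssim \omega _j(X)e^{-R|\Xi |^{\frac 1s}}$ valid for every $R>0$, and using the $\mascP _s$-moderateness \eqref{estomega} of the weights to pull the phase-space shifts out of $\omega _1\omega _2$, I would obtain $|V_\phi (b_1\wpr _0b_2)(X,\Xi )|\lesssim (\omega _1\omega _2)(X)e^{-R|\Xi |^{\frac 1s}}$ for every $R>0$; this is precisely membership in $S^{(\omega _1\omega _2)}_s(\rr {2d})$, and taking the corresponding unions and intersections over the exponential parameters handles the $\Gamma ^\infty _{0,s}$- and $\Gamma ^\infty _s$-cases. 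Since $\Sigma _{1/2}=\{ 0\}$, no such window exists at the endpoint $s=\frac 12$ occurring in (1) and (3); there I would instead exploit that the symbols extend to entire functions and estimate the power series for $e^{i\scal {D_\xi}{D_y}}$ term by term, using $(\beta +\delta )!\le 2^{|\beta +\delta |}\beta !\, \delta !$ and the analytic growth exactly as in the $s=\frac 12$ part of the proof of Theorem \ref{ThmCalculiTransf} and in Theorem \ref{theorem2}. I expect the main obstacle to be preserving the Gevrey factorial growth $(\alpha !)^s$ together with the arbitrariness of $\ep$ across the bilinear operation $\wpr _0$: for $s>\frac 12$ this is absorbed into the $e^{-R|\Xi |^{\frac 1s}}$ gain in the STFT convolution, whereas at $s=\frac 12$ it reduces to the convergence of the power series under a smallness condition on $\ep$.
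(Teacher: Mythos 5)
Your architecture for $s>\frac 12$ is sound and close in spirit to the paper's: the paper likewise reduces to a single quantization via Theorem \ref{ThmCalculiTransf} (it picks the Weyl case $A=\frac 12\cdot I$ rather than $A=0$), and then works in the modulation-space picture. Its key step is the weight inequality \eqref{star}, which is exactly your ``moderateness pulls the phase-space shifts out of $\omega _1\omega _2$'' step; but instead of proving the twisted-convolution estimate by hand it invokes Theorem 6.4 of \cite{To11} for the bilinear boundedness of $\wpr$ on the weighted $M^{\infty ,1}$ spaces, and obtains \eqref{SharpAconjugation} by a narrow-convergence density argument. Your self-contained route is legitimate, but note that your claim that the oscillatory integral defining $b_1\wpr _0b_2$ converges absolutely ``against the test functions'' is not right as stated: at the symbol level there are no test functions and the integrand has no decay in $(y,\eta )$, so you still need a regularization or an integration-by-parts device (or the density argument the paper uses) to define the product and to justify the kernel computation.

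The genuine gap is at the endpoint $s=\frac 12$ in part (3). For $a_j\in \Gamma ^\infty _{1/2}(\rr {2d})$ the constant $\ep _j$ in \eqref{symbols} is \emph{fixed} (only $h$ is arbitrary) and may be large, and the term-by-term bound on your power series gives $\sum _{|\alpha |=k}\frac 1{\alpha !}\,|D_\xi ^\alpha a_1|\,|D_x^\alpha a_2|\lesssim \binom{k+d-1}{d-1}(\ep _1\ep _2)^k$, which does not sum when $\ep _1\ep _2\ge 1$. The analogous series argument in the proof of Theorem \ref{ThmCalculiTransf} closes only because there the symbol lies in $S^{(\omega )}_{1/2}$, where \eqref{SsCond} holds for \emph{every} $\ep$ and one may take $\ep$ small; that luxury is available in your part (1) at $s=\frac 12$, but not in part (3). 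The paper circumvents exactly this point by regularization: Proposition \ref{LimitsSymbSpaces} replaces $a$ by $a_{\ep _1,\ep _2}=e^{-(\ep _1|x|^2+\ep _2|\xi |^2)}a$, establishes the statement for the damped symbols, and passes to the limit using Proposition \ref{LimitSpaces} and Theorem \ref{ThmCalculiTransf}. You need some such device for (3) at $s=\frac 12$. (As a side remark, your assertion that no window exists at $s=\frac 12$ is only accurate for the $S^{(\omega )}_s$ and $\Gamma ^\infty _{0,s}$ characterizations, which need $\Sigma _s$-windows; for $\Gamma ^\infty _s$ Proposition \ref{prop2'} works with $\phi \in \cS _{1/2}\setminus 0$, e.g.\ a Gaussian, so an STFT route is not automatically closed there --- but it is not the route you proposed.)
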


\par

For the proof of Theorem \ref{Composition} (3) we have the following.

\par

\begin{prop}\label{LimitsSymbSpaces}
Let $h>0$, $A$ be a real $d\times d$-matrix, $a,b \in \Gamma ^\infty _{s}(\rr {2d})$,
and set $a_{\ep _1,\ep _2}(x,\xi )=e^{-(\ep _1 |x|^2+\ep _2|\xi |^2)}a(x,\xi )$,
$\ep _1,\ep _2>0$.
Then the following is true:
\begin{enumerate}
\item $\displaystyle{a_{\ep _1,\ep_2} \wpr _0 b\to a\wpr _0 b}$ as
$\ep _1,\ep _2\to 0^+$.

\vrum

\item $\displaystyle{\op _0(a_{\ep _1,\ep _2})g \to \op _0(a)g}$
as $\ep _1,\ep _2\to 0^+$, and $g\in \maclS _{\frac 12}(\rr d)$.
\end{enumerate}
\end{prop}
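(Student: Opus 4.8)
The plan is to prove the two convergence statements of Proposition \ref{LimitsSymbSpaces} by reducing everything to the topology of $\Gamma^\infty_s$ described before Proposition \ref{LimitSpaces}, and in particular to the seminorms $\nm{\,\cdot\,}{s,h,r}$. The crucial preliminary observation is that $a_{\ep_1,\ep_2}=\phi_{\ep_1,\ep_2}\,a$ with $\phi_{\ep_1,\ep_2}(x,\xi)=e^{-(\ep_1|x|^2+\ep_2|\xi|^2)}$, and that the Gaussian cutoff satisfies, uniformly in $\ep_1,\ep_2$, an estimate of the type
$$
|\partial^\alpha \phi_{\ep_1,\ep_2}(x,\xi)|\le C\,h_0^{|\alpha|}(\ep_1+\ep_2)^{\frac{|\alpha|}{2}}\,\alpha!^{\frac 12}\,e^{-c(\ep_1|x|^2+\ep_2|\xi|^2)},
$$
exactly as in the proof of Proposition \ref{LimitSpaces}. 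This means that the argument of Proposition \ref{LimitSpaces}, where $a_\ep=e^{-\ep|\cdo|^2}a$ was shown to converge to $a$ in $\Gamma^\infty_s(\rr d)$, applies verbatim in $\rr{2d}$ with the two-parameter Gaussian. Thus I first record that $a_{\ep_1,\ep_2}\to a$ in $\Gamma^\infty_s(\rr{2d})$ as $\ep_1,\ep_2\to0^+$; the splitting into the $J_1$-term (controlling $(1-\phi_{\ep_1,\ep_2})a$) and the $J_2$-term (the Leibniz remainder, bounded by $(\ep_1+\ep_2)^{1/2}$ times a convergent geometric factor once the auxiliary parameter is taken large) carries over without change.

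For assertion (2) the strategy is to use this $\Gamma^\infty_s$-convergence together with the continuity result of Theorem \ref{theorem2}. Indeed, fixing $g\in\maclS_{\frac 12}(\rr d)\subseteq\cS_s(\rr d)$, the map $c\mapsto \op_0(c)g$ is, by the integral representation \eqref{e0.5} with $t=0$, given by testing $g$ against a kernel built from $c$; what I would show is that the difference $\op_0(a_{\ep_1,\ep_2})g-\op_0(a)g=\op_0(a_{\ep_1,\ep_2}-a)g$ tends to $0$ in $\cS_s(\rr d)$. Since $a_{\ep_1,\ep_2}-a\to0$ in $\Gamma^\infty_s(\rr{2d})$, i.e.\ $\nm{a_{\ep_1,\ep_2}-a}{s,h,r}\to 0$ for every $r>0$ and some fixed $h$, I would feed these seminorm estimates into the estimates driving the proof of Theorem \ref{theorem2}: the bound there for $x^\alpha D_x^\beta(\op_0(a)g)$ depends on the symbol only through the constants in \eqref{symbols}, and the same computation with $a$ replaced by $a_{\ep_1,\ep_2}-a$ yields $\nm{\op_0(a_{\ep_1,\ep_2}-a)g}{\cS_{s,h'}}\lesssim \sup_{\alpha}\nm{a_{\ep_1,\ep_2}-a}{s,h,r}$ up to an $\ep$-independent factor coming from $g$, which forces the convergence.

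For assertion (1) the plan is to analyse $a_{\ep_1,\ep_2}\wpr_0 b$ directly through the twisted product $\wpr_0$ defined by $a_1\wpr_0 a_2(x,\xi)=\bigl(e^{i\scal{D_\xi}{D_y}}(a_1(x,\xi)a_2(y,\eta))\bigr)\big|_{(y,\eta)=(x,\xi)}$. The cleanest route is again via the short-time Fourier transform characterization of $\Gamma^\infty_s$ in Proposition \ref{prop2'} and Remark \ref{remSymbClassModSpace}: I would show that the STFT of $a_{\ep_1,\ep_2}\wpr_0 b$ converges to that of $a\wpr_0 b$ with the exponential weights uniform in $\ep_1,\ep_2$, by expressing $\wpr_0$ as composition with $e^{i\scal{D_\xi}{D_y}}$ (a homeomorphism on the relevant spaces by Theorem \ref{ThmCalculiTransf}) followed by restriction to the diagonal (continuous by the trace result Proposition \ref{TraceProp}). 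Since each of these operations is continuous on $\Gamma^\infty_s$, the $\Gamma^\infty_s$-convergence $a_{\ep_1,\ep_2}\to a$ already established propagates through to $a_{\ep_1,\ep_2}\wpr_0 b\to a\wpr_0 b$.

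The main obstacle I anticipate is the uniformity of all estimates in the two parameters $\ep_1,\ep_2$ simultaneously, since $\Gamma^\infty_s$-convergence is defined through a single $h$ but with all $r>0$: I must be careful that the constants $C,c,h_0$ in the Gaussian derivative bound, and the large auxiliary parameter needed to sum the Leibniz series, can be chosen independently of $\ep_1,\ep_2$ and of $r$. The secondary delicate point is the interchange of the twisted-product (or operator) limit with the $\ep$-limit, which is exactly what the continuity of $e^{i\scal{D_\xi}{D_y}}$ on $\Gamma^\infty_s$ and of the trace map is designed to furnish; once those continuity statements are invoked the convergence follows formally, so the real work is confined to verifying the uniform Gaussian-cutoff estimate.
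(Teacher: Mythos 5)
Your proof of part (1) is essentially the paper's: the paper disposes of (1) in one line by citing Proposition \ref{LimitSpaces} (and its proof, adapted to the two-parameter Gaussian on $\rr {2d}$), Proposition \ref{TraceProp} and Theorem \ref{ThmCalculiTransf} (3), which is exactly your factorization of $\wpr _0$ into tensor product, $e^{i\scal {D_\xi}{D_y}}$, and restriction to the diagonal. For part (2), however, you take a genuinely different route. The paper exploits the multiplicative structure of the regularizer: writing $\widehat {g_{\ep _2}}=e^{-\ep _2|\cdo |^2}\widehat g$, one has the exact identity $\op _0(a_{\ep _1,\ep _2})g=e^{-\ep _1|\cdo |^2}\op _0(a)g_{\ep _2}$, so the $\xi$-Gaussian is absorbed into the test function and the $x$-Gaussian factors out as a multiplication; convergence then follows from the already-established continuity of $\op _0(a)$ on $\maclS _{\frac 12}$ together with the elementary limits $g_{\ep _2}\to g$ and $e^{-\ep _1|\cdo |^2}u\to u$, with no new symbol estimates. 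You instead establish continuity of the map $c\mapsto \op _0(c)g$ with respect to the seminorms $\nm \cdo {s,h,r}$ by rerunning the proof of Theorem \ref{theorem2} for the difference symbol $a_{\ep _1,\ep _2}-a$. This works and has the advantage of giving joint convergence in $(\ep _1,\ep _2)$ directly (the paper's identity naturally produces an iterated limit), but it costs you the extra verification that the implicit constants in Theorem \ref{theorem2} depend on the symbol only through $\nm \cdo {s,h,r}$ (linearly, with $h$ fixed and $r$ taken small relative to the decay rate of $\widehat g$) — the paper states that theorem only qualitatively, so this quantitative rereading is real additional work, though routine. Your uniform two-parameter Gaussian bound and the resulting convergence $a_{\ep _1,\ep _2}\to a$ in $\Gamma ^\infty _s(\rr {2d})$ are correct and are also what the paper implicitly uses in (1).
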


\par

\begin{proof}
The assertion (1) follows from Proposition \ref{LimitSpaces} and its proof,
Proposition \ref{TraceProp} and Theorem \ref{ThmCalculiTransf} (3).

\par

(2) Let $g_\ep$ be defined by $\widehat g_\ep = e^{-\ep |\cdo |^2}\widehat g$, and let
$$
a_\ep (x,\xi )\equiv e^{-\ep |x|^2}a(x,\xi ).
$$
Then $g_\ep \to g$ in $\maclS _{\frac 12}(\rr d)$, and
(1) gives $a_\ep \to a$ and $a_{\ep _1,\ep _2} \to a$ in $\Gamma
_{\frac 12}^{\infty ,h_1}(\rr {2d})$ as $\ep ,\ep _1 ,\ep _2\to 0^+$.

\par

This gives
\begin{multline*}
\lim _{\ep _1\to 0^+} \big ( \lim _{\ep _2\to 0^+} (\op _0(a_{\ep _1,\ep _2})g)  \big )
=
\lim _{\ep _1\to 0^+} \big ( \lim _{\ep _2\to 0^+}
e^{-\ep _1|\cdo |^2}(\op _0(a)g_{\ep _2})  \big )
\\[1ex]
=\op _0(a)g,
\end{multline*}
and (2) follows.
\end{proof}

\par

\begin{proof}[Proof of Theorem \ref{Composition}]
(1) By the previous proposition, it suffices to
consider the Weyl case, $A=\frac 12\cdot I$. Let $\omega _0=\omega _1\omega _2$,
and set
$$
\omega _{j,R}(X,Y) = \omega _j(X)e^{-R|Y|^{\frac 1s}},\quad j=0,1,2 .
$$
We claim
\begin{equation}\label{star}
\frac 1{\omega _{0,R}(X,Y)} \lesssim \frac 1{\omega _{1,R_0}(X-Y+Z,Z)}
\cdot
\frac 1{\omega _{2,R_0}(X+Z,Y-Z)},
\end{equation}
for some $R_0=cR+c_0$, where $c$ and $c_0$ are independent of $R>0$.

\par

In fact,
\begin{multline*}
\frac{1}{\omega_{0,R}(X,Y)}= \frac{e^{R|Y|^{\frac 1s}}}{\omega_0(X)} 
\lesssim \frac{e^{cR|Z|^{\frac 1s}}e^{cR|Y-Z|^{\frac 1s}}}{\omega_1(X)\omega_2(X)} 
\\[1ex]
\lesssim \frac{e^{cR|Z|^{\frac 1s}}e^{c_0|Y-Z|^{\frac 1s}}}{\omega_1(X-Y+Z)} \cdot
\frac{e^{cR|Y-Z|^{\frac 1s}}e^{c_0|Z|^{\frac 1s}}}{\omega_2(X+Z)}
\\[1ex]
= \frac{1}{\omega_{1,R_0}(X-Y+Z,Z)} \cdot \frac{1}{\omega_{2,R_0}(X+Z,Y-Z)},
\end{multline*}  
for some $R_0=cR+c_0$, where $c, c_0$ are independent of $R$. 

\par

Now assume that $a_j \in S_s^{(\omega_j)}, j=1,2.$ Since $S_s^{(\omega_j)}$
is contained in $M^{\infty,1}_{(1/\omega_{j,R})}$ for every $R>0$, Theorem 6.4
in \cite{To11} and \eqref{star} show that $a_1 \wpr a_2$ is uniquely defined and
belongs to $M^{\infty,1}_{(1/\omega_{0,R})}$ for every $R>0$. That is, $a_1
\wpr a_2 \in \bigcap\limits_{R>0} M^{\infty,1}_{(1/\omega_{0,R})}
= S_s^{(\omega_0)}$.

\par

The result now follows from the fact that $\op ^w(a_1\wpr a_2) = \op ^w(a_1)\circ
\op ^w(a_2)$ when $a_1\in M^{\infty ,1}_{(1/\omega _{1,R})}(\rr {2d})$ and
$a_2\in \maclS _{\frac 12}(\rr {2d})$, that $\maclS _{\frac 12}(\rr {2d})$ is narrowly dense in
$M^{\infty,1}_{(1/\omega_{0,R})}$, and that $a_1\wpr a_{2,j}\to a_1\wpr a_2$
narrowly as $a_{2,j}\to a_2$ narrowly.

\par

The assertions (2) and (3) in the case $s>\frac 12$ follow by similar
arguments as the proof of (1), and are left for the reader. The assertion
(3) in the case $s=\frac 12$ follows from Proposition \ref{LimitsSymbSpaces}.
\end{proof}

\par

\end{document}